\setlist[enumerate,1]{label=(\arabic*), ref=(\arabic*)}
\setlist[enumerate,3]{label=(\roman*), ref=(\roman*)}
\theoremstyle{plain}
\newtheorem{theorem}{Theorem}[section]
\newtheorem{lemma}[theorem]{Lemma}
\newtheorem{proposition}[theorem]{Proposition}
\newtheorem{observation}[theorem]{Observation}
\newtheorem{conjecture}[theorem]{Conjecture}
\newtheorem{problem}[theorem]{Problem}
\newtheorem{claim}{Claim}[theorem]
\newtheorem*{claim*}{Claim}
\newenvironment{claimproof}[1][Proof]{\par
	\pushQED{\qed}%
	
	\normalfont \topsep6\p@\@plus6\p@\relax
	\trivlist
	\item[\hskip\labelsep
	\textit{#1}\@addpunct{.}~]\ignorespaces
}{%
	\popQED\endtrivlist\@endpefalse
}
\newlist{Cases}{enumerate}{3}
\setlist[Cases]{parsep=0pt plus 1pt}
\setlist[Cases,1]{wide=0pt, listparindent=\parindent,
    label = \textbf{Case~\arabic*:}, ref = \arabic*}
\setlist[Cases,2]{wide=\parindent, listparindent=\parindent,
    label = \textbf{Case~\arabic{Casesi}-\arabic{Casesii}:}}
\crefname{Casesi}{case}{cases}
\newcounter{case}
\crefname{case}{case}{cases}
\theoremstyle{definition}
\newtheorem{definition}[theorem]{Definition}
\newcommand{\calB}{\mathcal{B}}
\newcommand{\calC}{\mathcal{C}}
\newcommand{\calD}{\mathcal{D}}
\newcommand{\calF}{\mathcal{F}}
\newcommand{\calP}{\mathcal{P}}
\newcommand{\calT}{\mathcal{T}}
\newcommand{\calY}{\mathcal{Y}}
\newcommand{\calZ}{\mathcal{Z}}
\newcommand{\ve}{\varepsilon}
\newcommand{\hcf}{\mathrm{hcf}}
\NewDocumentCommand{\xsideset}{mmme{_^}}{%
  \mathop{%
    % half width of #3
    \settowidth{\dimen0}{$\m@th\displaystyle#3$}%
    \dimen0=.5\dimen0
    % half width of #3 with subscripts or superscripts,
    % half width of #3 removed
    \settowidth{\dimen2}{$%
      \m@th\displaystyle#3%
      \IfValueT{#4}{_{#4}}%
      \IfValueT{#5}{^{#5}}%
    $}%
    \dimen2=.5\dimen2
    \advance\dimen2 -\dimen0
    % prescripts
    \sbox6{\scriptspace\z@$\displaystyle{\vphantom{#3}}#1$}
    % postscripts
    \sbox8{\scriptspace\z@$\displaystyle{\vphantom{#3}}#2$}
    % typeset the thing
    \ifdim\wd6>\dimen2 \kern\dimexpr\wd6-\dimen2\relax\fi
    {%
     \mathop{\llap{\copy6}{\displaystyle#3}\rlap{\copy8}}\limits
     \IfValueT{#4}{_{#4}}%
     \IfValueT{#5}{^{#5}}%
    }%
    \ifdim\wd8>\dimen2 \kern\dimexpr\wd8-\dimen2\relax\fi
  }%
}
\newcommand{\defeq}{\coloneqq}
\let\originalleft\left
\let\originalright\right
\renewcommand{\left}{\mathopen{}\mathclose\bgroup\originalleft}
\renewcommand{\right}{\aftergroup\egroup\originalright}
\title{On Perfect Subdivision Tilings}
\author{%
    Hyunwoo Lee%
        \thanks{Department of Mathematical Sciences, KAIST, South Korea and Extremal Combinatorics and Probability Group
(ECOPRO), Institute for Basic Science (IBS).
        E-mail: {\ttfamily hyunwoo.lee@kaist.ac.kr.}
        Hyunwoo Lee was supported by the Institute for Basic Science (IBS-R029-C4).} 
}
\begin{document}
\maketitle

%-------------------------------------------------------------------
\begin{abstract}
    For a given graph $H$, we say that a graph $G$ has a perfect $H$-subdivision tiling if $G$ contains a collection of vertex-disjoint subdivisions of $H$ covering all vertices of $G.$ Let $\delta_{\mathrm{sub}}(n, H)$ be the smallest integer $k$ such that any $n$-vertex graph $G$ with minimum degree at least $k$ has a perfect $H$-subdivision tiling.
    For every graph $H$, we asymptotically determined the value of $\delta_{\mathrm{sub}}(n, H)$. More precisely, for every graph $H$ with at least one edge, there is an integer $\hcf_{\xi}(H)$ and a constant $1 < \xi^*(H)\leq 2$ that can be explicitly determined by structural properties of $H$ such that $\delta_{\mathrm{sub}}(n, H) = \left(1 - \frac{1}{\xi^*(H)} + o(1) \right)n$ holds for all $n$ and $H$ unless $\hcf_{\xi}(H) = 2$ and $n$ is odd. When $\hcf_{\xi}(H) = 2$ and $n$ is odd, then we show that $\delta_{\mathrm{sub}}(n, H) = \left(\frac{1}{2} + o(1) \right)n$.
\end{abstract}

%------------------------------------------------------------------
\section{Introduction}\label{sec:intro}

Embedding a large sparse subgraph into a dense graph is one of the most central problems in extremal graph theory. It is well-known that any graph $G$ with a minimum degree of at least $\lceil \frac{v(G)}{2} \rceil$ has a Hamiltonian cycle, and hence also a perfect matching if the number of vertices $v(G)$ of $G$ is even.
A \emph{perfect matching} of a graph $G$ is a vertex-disjoint collection of edges whose union covers all vertices of $G$. A natural generalization of a perfect matching is a perfect $H$-tiling for a general graph $H$.
We say $G$ has a \emph{perfect $H$-tiling} if $G$ contains a collection of vertex-disjoint copies of $H$ whose union covers all vertices of $G$. We note that a perfect $H$-tiling exists only if $v(G)$ is divisible by $v(H)$. For a positive integer $n$ divisible by $v(H)$, we denote by $\delta(n, H)$ the minimum integer $k$ such that any $n$-vertex graph $G$ with a minimum degree of at least $k$ has a perfect $H$-tiling.
The celebrated Hajnal-Szemerédi~\cite{hajnal1970proof} theorem states that for any integer $r \geq 2$, the number $\delta(n, K_r)$ is equal to $\left(1 - \frac{1}{r}\right)n$.

An asymptotic version of the Hajnal-Szemerédi theorem for a general graph $H$ was first proven by Alon and Yuster~\cite{alon1996h}. They showed that if $n$ is divisible by $v(H)$, then $\delta(n, H) \leq \left( 1 - \frac{1}{\chi(H)}\right)n + o(n)$, where $\chi(H)$ is the chromatic number of $H$.
Komlós, Sárkőzy, and Szemerédi~\cite{komlos2001proof} improved the $o(n)$ term in the Alon-Yuster theorem to some constant $C = C(H)$, which settled the conjecture of Alon and Yuster~\cite{alon1996h}.
Another direction for an asymptotic extension of the Hajnal-Szemerédi theorem was proven by Komlós~\cite{komlos2000tiling}. Komlós showed that for any $\gamma > 0$, there exists $n_0 = n_0(\gamma, H)$ such that if $n \geq n_0$, then any $n$-vertex graph $G$ whose minimum degree is at least $\left(1 - \frac{1}{\chi_{cr}(H)} \right)n$ contains an $H$-tiling that covers at least $(1 - \gamma)n$ vertices of $G$.
Here, $\chi_{cr}(H)$ denotes the \emph{critical chromatic number} of $H$, which is defined as $\chi_{cr}(H) = \frac{(\chi(H) - 1)v(H)}{v(H) - \sigma(H)}$, where $\sigma(H)$ is the minimum possible size of a color class among all colorings of $H$ with $\chi(H)$ colors. Komlós~\cite{komlos2000tiling} conjectured that the number of uncovered vertices can be reduced to a constant, and this conjecture was confirmed by Shokoufandeh and Zhao~\cite{shokoufandeh2003proof}. More precisely, the following holds.

\begin{theorem}[Shokoufandeh and Zhao~\cite{shokoufandeh2003proof}]\label{thm:chritical-constant-missing}
    Let $H$ be a graph. Then there exists a constant $C = C(H)$, which only depends on $H$, such that any graph $G$ on $n$ vertices with a minimum degree of at least $\left(1 - \frac{1}{\chi_{cr}(H)} \right)n$ contains an $H$-tiling that covers all but at most $C$ vertices of $G$.
\end{theorem}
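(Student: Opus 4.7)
The plan is to combine the Szemer\'{e}di Regularity Lemma with the Blow-up Lemma of Koml\'{o}s, S\'{a}rk\"{o}zy and Szemer\'{e}di, and then handle the uncovered vertices via an absorbing argument. The overall skeleton is the one Koml\'{o}s used in~\cite{komlos2000tiling} to produce the almost-perfect tiling, enhanced so that the $\gamma n$ slack from his theorem is absorbed into a prepared structure whose leftover can be pinned at $C(H)$.

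First, I would apply the Regularity Lemma with parameters $\varepsilon \ll d \ll 1/v(H)$ to $G$, producing an equipartition $V_0 \cup V_1 \cup \cdots \cup V_t$ with $|V_0| \leq \varepsilon n$ and clusters of equal size $m$, and form the reduced graph $R$ whose edges correspond to $\varepsilon$-regular pairs of density at least $d$. A routine computation gives $\delta(R) \geq \left(1 - \tfrac{1}{\chi_{cr}(H)} - o(1)\right) t$. Applying Koml\'{o}s's tiling theorem to $R$ produces a collection of blown-up copies of $H$ in $R$, with cluster weightings matching the color-class ratio $\sigma(H) : v(H) - \sigma(H)$ prescribed by $\chi_{cr}(H)$, together covering all but $\gamma t$ clusters. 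After small cluster-size adjustments (moving $o(m)$ vertices per cluster), the Blow-up Lemma converts each such blown-up $H$-copy into a perfect $H$-tiling of its own vertex set, yielding an $H$-tiling of $G$ that misses at most $O(\gamma n)$ vertices.

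The main step---and the main obstacle---is shrinking the leftover from $\gamma n$ down to a constant. For this, before running the Blow-up Lemma I would reserve an ``absorbing set'' $A \subseteq V(G)$ of size $\Theta(\gamma n)$ with the property that, for every subset $S \subseteq V(G) \setminus A$ with $|S| \leq \gamma^2 n$ satisfying a mild divisibility condition modulo $v(H)$, the induced subgraph $G[A \cup S]$ admits an $H$-tiling covering all but $C(H)$ vertices. Such an $A$ is built by locating, for each vertex $v \in V(G)$, polynomially many local ``absorbers''---small subgraphs that can be tiled by copies of $H$ in two ways, one of which includes $v$---whose existence follows from the minimum-degree hypothesis together with the definition of $\chi_{cr}(H)$, and then a probabilistic deletion argument in the spirit of R\"{o}dl--Ruci\'{n}ski--Szemer\'{e}di realises all of them simultaneously inside a common $A$. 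The $O(\gamma n)$ uncovered vertices are then fed into $A$ one by one, each activating a disjoint absorber. The core technical difficulty lies in constructing absorbers that respect the color-class ratio forced by $\chi_{cr}(H)$; this is precisely what creates the $+C(H)$ slack rather than a perfect tiling, as unavoidable divisibility obstructions modulo $v(H)$ (and modulo the color-class sizes of $H$) prevent complete coverage for a bounded number of vertices.
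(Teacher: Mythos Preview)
The paper does not prove this theorem. Theorem~\ref{thm:chritical-constant-missing} is quoted from Shokoufandeh and Zhao~\cite{shokoufandeh2003proof} as a black box and is only \emph{applied} later (inside Lemma~\ref{lem:cover-almost}) to obtain an $H^*$-tiling covering all but a constant number of vertices. Consequently there is no ``paper's own proof'' to compare your proposal against.

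As a side remark on your sketch itself: the absorption machinery you invoke (in the R\"{o}dl--Ruci\'{n}ski--Szemer\'{e}di style) postdates the Shokoufandeh--Zhao paper, so it is certainly not the argument they used. Their proof proceeds via the Regularity Lemma and a careful integer-programming/fractional-tiling analysis of the reduced graph, refining Koml\'{o}s's approach rather than introducing absorbers. Your outline is a plausible modern strategy, but the step where you assert that ``polynomially many local absorbers exist for each vertex $v$'' from the hypothesis $\delta(G)\ge(1-1/\chi_{cr}(H))n$ alone is exactly the delicate point: at the critical chromatic threshold one generally cannot guarantee such absorbers for every vertex (this is why K\"{u}hn--Osthus need the $hcf$ distinction and why the present paper needs the regularity-based exchanger machinery). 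So the sketch is more of an aspiration than a proof, and in any case it is not something the paper attempts.
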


The almost exact value of $\delta(n, H)$ for every graph $H$ was determined by Kühn and Osthus~\cite{kuhn2009minimum} up to an additive constant depending only on $H$. To formulate this theorem, we need to define some notations.
We say a proper coloring of $H$ is \emph{optimal} if it uses exactly $\chi(H)$ colors. We denote by $\Phi(H)$ the set of all optimal colorings of $H$. Let $\phi \in \Phi(H)$ be an optimal coloring of $H$ with the size of color classes being $x_1 \leq \dots \leq x_{\chi(H)}$. Let $\calD(\phi) \defeq \{x_{i+1} - x_i : i \in [\chi(H) - 1]\}$ and $\calD(H) \defeq \bigcup_{\phi \in \Phi(H)} \calD(\phi)$.
We write $\hcf_{\chi}(H)$ for the highest common factor of all integers in $\calD(H)$. (If $\calD(H) = \{0\}$, we define $\hcf_{\chi}(H) = \infty.$) We denote by $\hcf_{c}(H)$ the highest common factor of all the orders of components in $H$.
For a graph $H$, we define
$$
\hcf(H) \defeq 
\begin{cases}
    1 & \text{if } \chi(H) \geq 3 \text{ and } \hcf_{\chi}(H) = 1,\\
    1 & \text{if } \chi(H) = 2, \text{ } \hcf_{\chi}(H) \leq 2, \text{ and } \hcf_{c}(H) = 1, \\
    0 & \text{otherwise.}
\end{cases}
$$
We define $\chi^*(H) = \chi_{cr}(H)$ if $\hcf(H) = 1$; otherwise, $\chi^*(H) = \chi(H).$ We are now ready to state the Kühn-Osthus theorem.

\begin{theorem}[Kühn and Osthus~\cite{kuhn2009minimum}]\label{thm:kuhn-osthus}
    Let $H$ be a graph and $n$ be a positive integer divisible by $v(H).$ Then there exists a constant $C = C(H)$ depending only on $H$ such that 
    \begin{equation*}
        \left(1 - \frac{1}{\chi^*(H)}\right)n - 1 \leq \delta(n, H) \leq \left(1 - \frac{1}{\chi^*(H)}\right)n + C.
    \end{equation*}
\end{theorem}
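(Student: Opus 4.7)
The plan is to establish Theorem~\ref{thm:kuhn-osthus} by separately proving the extremal lower-bound construction and the tiling upper bound, with the $hcf(H)$ case split dictating which of two regimes applies.

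For the lower bound $\delta(n, H) \geq (1 - 1/\chi^*(H))n - 1$, one exhibits extremal $n$-vertex graphs of just-below-threshold minimum degree that admit no perfect $H$-tiling. In the $\chi_{cr}$ regime ($hcf(H) = 1$), the standard construction is a nearly-complete $\chi(H)$-partite graph whose part sizes realize the ratio dictated by $\sigma(H)/v(H)$; any putative perfect tiling would force too many or too few copies of $H$ to use the smallest color class, contradicting the exact size of one part. In the $\chi$ regime ($hcf(H) = 0$), the obstruction is divisibility-theoretic: one perturbs a balanced $\chi(H)$-partite graph (or, when $\chi(H) = 2$, tweaks component counts using $hcf_c(H)$) so that the residues of part sizes modulo the relevant $hcf$-parameter cannot be written as any nonnegative integer combination of the color-class sizes of $H$.

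For the upper bound, the plan is a regularity-plus-absorbing argument. First, reserve a small set $A \subseteq V(G)$ with $|A| = o(n)$ built by probabilistic selection of absorbing gadgets: each gadget is a bounded-size subgraph of $G$ that can tile either $A$ alone or $A$ together with any leftover set $U$ of bounded size whose residues modulo the $hcf$-parameters are compatible. The existence of many such gadgets through each vertex follows from the minimum degree hypothesis via a supersaturation-style count of $H$-copies. Next, apply Szemer\'{e}di's regularity lemma to $G \setminus A$; the reduced graph $R$ inherits minimum degree at least $(1 - 1/\chi^*(H) - o(1))v(R)$, so Theorem~\ref{thm:chritical-constant-missing} combined with the blow-up lemma of Koml\'{o}s--S\'{a}rk\"{o}zy--Szemer\'{e}di produces an $H$-tiling of $G \setminus A$ leaving only a set $U$ of size $O(1)$ uncovered. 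Finally, invoke the absorbing property of $A$ to complete $A \cup U$ into a perfect $H$-tiling, yielding the additive constant $C = C(H)$.

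The main obstacle is the construction of $A$ so that \emph{every} feasible leftover $U$ can be absorbed; this is precisely where the $hcf$ classification enters, since in the $hcf(H) = 0$ regime no absorber can correct residues incompatible with the divisibility constraints, which is exactly what forces the threshold up from $\chi_{cr}(H)$ to $\chi(H)$. A secondary technical subtlety is ensuring that when Theorem~\ref{thm:chritical-constant-missing} is pulled back through the blow-up lemma, the resulting tiling in $G \setminus A$ leaves only $O(1)$ rather than $o(n)$ vertices uncovered; this typically requires an iterated absorbing pass, or a stability dichotomy to reduce to a near-extremal configuration that can be handled by direct ad-hoc methods.
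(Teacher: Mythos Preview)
The theorem you are attempting to prove is not actually proved in this paper. Theorem~\ref{thm:kuhn-osthus} is quoted from K\"{u}hn and Osthus~\cite{kuhn2009minimum} as a background result and is used as a black box (for instance, in the argument establishing Proposition~\ref{prop:not-exceed-1/2}). The paper supplies no proof of its own for this statement, so there is no ``paper's proof'' against which to compare your proposal.

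That said, a brief remark on your sketch as a standalone outline of the K\"{u}hn--Osthus argument: the lower-bound constructions you describe are correct in spirit. For the upper bound, however, the original 2009 proof does \emph{not} use the absorbing method; it predates the now-standard absorption paradigm for tiling problems. K\"{u}hn and Osthus work directly with the regularity and blow-up lemmas, together with a delicate ``bottle'' structure and a stability-type case analysis to handle the last constant number of leftover vertices. Your absorbing-based outline would likely yield an alternative proof (and indeed such proofs have appeared subsequently), but it is not the route taken in~\cite{kuhn2009minimum}, and in any case it is not what the present paper does, since the present paper simply cites the result.
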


%-----------------------------------------------------------------
\subsection{Main results}

Motivated by the Kühn-Osthus theorem on perfect $H$-tilings, several variations of \Cref{thm:kuhn-osthus} have been considered. (For instance, see~\cite{balogh2022tilings, freschi2022dirac, han2021tilings, hurley2022sufficient, hyde2019degree, kuhn2009embedding, lo2015f}).

In this article, we consider a problem related to the concept of perfect $H$-tilings and subdivision embeddings. A graph $H'$ is a \emph{subdivision} of $H$ if $H'$ is obtained from $H$ by replacing edges of $H$ with vertex-disjoint paths while maintaining their endpoints. Since subdivisions maintain the topological structure of the original graph, various problems related to subdivisions have been posed and extensively studied. In particular, finding a sufficient condition on the host graph to contain a subdivision of $H$ is an important problem in extremal graph theory.

Let $H$ and $G$ be graphs. An \emph{$H$-subdivision tiling} is a collection of vertex-disjoint unions of subdivisions of $H$. We say that $G$ has a \emph{perfect $H$-subdivision tiling} if $G$ has an $H$-subdivision tiling that covers all vertices of $G$.
A natural question would be to determine the minimum degree threshold for a positive integer $n$ that ensures the existence of a perfect $H$-subdivision tiling in any $n$-vertex graph $G$. We define this minimum degree threshold as follows.

\begin{definition}
    Let $H$ be a graph. We denote the minimum degree threshold for perfect $H$-subdivision tilings by $\delta_{\mathrm{sub}}(n, H)$, which is the smallest integer $k$ such that any $n$-vertex graph $G$ with a minimum degree of at least $k$ has a perfect $H$-subdivision tiling.
\end{definition}

If $H$ has no edges, then a perfect $H$-subdivision tiling exists if and only if $v(G)$ is divisible by $v(H)$, regardless of the minimum degree $\delta(G)$ of $G$. Thus, from now on, we only consider graphs with at least one edge.

For most results on embedding large graphs, including perfect $H$-tilings and large subdivisions, embedding a graph with more complex structures requires a larger minimum degree threshold. Surprisingly, we observe the opposite phenomenon for perfect subdivision tiling problems. For example, \Cref{thm:main} states that $\delta_{\mathrm{sub}}(n, K_r)$ decreases as $r$ increases when $3 \leq r \leq 5$. This is due to the fact that a larger value of $r$ ensures that $K_r$ has a bipartite subdivision with a more asymmetric bipartition.

Since embedding bipartite graphs generally requires a lower minimum degree than non-bipartite graphs, we want to cover most of the vertices of the host graph with subdivisions of $H$ that are bipartite. Suppose every bipartite subdivision of $H$ is, in some sense, balanced. In that case, it cannot be tiled in a highly unbalanced complete bipartite graph that has a lower minimum degree than a balanced complete bipartite graph. For this reason, we need to measure how unbalanced bipartite subdivisions of $H$ can be, as it poses some space barriers to the problem.

For this purpose, we introduce the following two definitions.

\begin{definition}
    Let $H$ be a graph and $X \subseteq V(H).$ We define a function $f_H: 2^{V(H)} \to \mathbb{R}$ as $f_H(X) = \frac{v(H) + e(H[X]) + e(H[Y])}{|X| + e(H[Y])}$ where $Y = V(H) \setminus X.$
\end{definition}

\begin{definition}
    Let $H$ be a graph. We define $\xi(H) \defeq \min \{f_H(X) : X \subseteq V(H)\}.$
\end{definition}

Note that subdividing all edges in $H[X]$ and $H[Y]$ yields a bipartite subdivision $H'$ of $H$, and $f_H(X)$ measures the ratio between $V(H')$ and one part of its bipartition. The smaller the value of $f_H$, the more unbalanced $H'$ is. Later in \Cref{obs:why_I_define_H*}, we will prove that $\xi(H)$ can be used to measure the ratio between the two parts in the most asymmetric bipartite subdivision of $H$. Note that we always have $1 < \xi(H) \leq 2$. Indeed,
$$ f_H(V(H)) \geq \frac{v(H) + e(H)}{v(H)} > 1, $$
and for any partition $X, Y$ of $V(G)$, we have
$$ v(H) + e(H[X]) + e(H[Y]) = |X| + e(H[X]) + |Y| + e(H[Y]) \leq 2 \max\{|Z| + e(H[Z]) : Z \in \{X, Y\}\}, $$
so, $\min\{f_H(X), f_H(Y)\} \leq 2$.

Another crucial factor is the divisibility issue. Assume that all bipartite subdivisions of $H$ have bipartitions with both parts having the same parity. If $G$ is a complete bipartite graph $K_{a, b}$ with $a$ and $b$ having different parity, then we cannot find perfect $H$-subdivision tilings in $G$, as it poses some divisibility barriers to the problem. Hence, we need to introduce the following definitions concerning the difference between the two parts in bipartitions of subdivisions of $H$ and their highest common factor.

\begin{definition}
    Let $H$ be a graph. We define $\calC(H) \defeq \{(|X| + e(H[Y])) - (|Y| + e(H[X])) : X \subseteq V(H), Y = V(H) \setminus X\}$. We denote by $\hcf_{\xi}(H)$ the highest common factor of all integers in $\calC(H)$. (If $\calC(H) = \{0\}$, we define $\hcf_{\xi}(H) = \infty.$)
\end{definition}

By considering the space barrier and the divisibility barrier, we introduce the following parameter measuring both obstacles for the problem. We will show that this is the determining factor for $\delta_{\mathrm{sub}}(n, H)$.

\begin{definition}
    Let $H$ be a graph. We define
    $$
    \xi^*(H) \defeq 
    \begin{cases}
        \xi(H) &\text{if } \hcf_{\xi}(H) = 1,\\
        \max\left\{\frac{3}{2}, \xi(H)\right\} &\text{if } \hcf_{\xi}(H) = 2,\\
        2 &\text{otherwise.}
    \end{cases}
    $$
\end{definition}

We are now ready to state our main theorems. The following theorem gives the asymptotically exact value for $\delta_{\mathrm{sub}}(n, H)$ except in one case, when $\hcf_{\xi}(H) = 2$.

\begin{theorem}\label{thm:main}
    Let $H$ be a graph with $\hcf_{\xi}(H) \neq 2$. For every $\gamma > 0$, there exists an integer $n_0 = n_0(\gamma, H)$ such that for any $n \geq n_0$, the following holds.
    \begin{equation*}
        \left(1 - \frac{1}{\xi^*(H)} \right)n - 1 \leq \delta_{\mathrm{sub}}(n, H) \leq \left(1 - \frac{1}{\xi^*(H)} + \gamma \right)n
    \end{equation*}
\end{theorem}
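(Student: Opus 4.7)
The strategy is to exhibit extremal $n$-vertex graphs with minimum degree close to $(1-1/\xi^*(H))n$ that admit no perfect $H$-subdivision tiling, split by the two barriers. When $hcf_\xi(H)=1$, so $\xi^*(H)=\xi(H)$, take $G=K_{a,n-a}$ with $a=\lceil(1-1/\xi(H))n\rceil-1$, giving $\delta(G)=a$. Since $G$ is bipartite, every connected $H$-subdivision tile $H'$ must itself be bipartite with parts $P_1,P_2$ embedded across the bipartition of $G$. A short argument, using only the definition of $\xi(H)$ and the observation that every bipartite subdivision of $H$ arises from a ``simple'' bipartite subdivision associated to some partition of $V(H)$ by inserting vertices in pairs (one per colour class), yields $\max(|P_1|,|P_2|)\leq v(H')/\xi(H)$. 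Summing over all tiles forces the larger side of $G$ to carry at most $n/\xi(H)<n-a$ vertices, a contradiction. When $hcf_\xi(H)\geq 3$, so $\xi^*(H)=2$, take $G=K_{a,n-a}$ with $|a-(n-a)|\in\{1,2\}$ chosen so that $|a-(n-a)|$ is not divisible by $hcf_\xi(H)\geq 3$; then $\delta(G)\geq\lfloor n/2\rfloor-1$. Every bipartite subdivision tile has part-difference in $\calC(H)$ and hence divisible by $hcf_\xi(H)$; summing over tiles forces $a-(n-a)$ to be divisible by $hcf_\xi(H)$, a contradiction.

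\textbf{Upper bound.} Assume $\delta(G)\geq(1-1/\xi^*(H)+\gamma)n$ and follow the absorbing-method template. First, construct an \emph{absorbing set} $A\subseteq V(G)$ with $|A|\leq\alpha n$ for some small $\alpha=\alpha(\gamma,H)>0$, so that for any $L\subseteq V(G)\setminus A$ of size at most $\alpha^2 n$ satisfying the relevant divisibility condition, $G[A\cup L]$ admits a perfect $H$-subdivision tiling. The key ingredient is that subdivisions are intrinsically flexible: inserting extra vertices into an internal path of a subdivision of $H$ yields another subdivision of $H$. Using the large minimum degree, every vertex lies in many absorbing gadgets of this type, and a R\"{o}dl-Ruci\'{n}ski-Szemer\'{e}di style probabilistic argument assembles them into a single global absorber. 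Second, let $H^*$ be the bipartite subdivision of $H$ associated with a minimizer $X^*$ of $f_H$, so that $\chi_{cr}(H^*)=\xi(H)\leq\xi^*(H)$. The inequality $\delta(G\setminus A)\geq (1-1/\chi_{cr}(H^*))|V(G\setminus A)|+\gamma n/2$ then allows \Cref{thm:chritical-constant-missing} to yield an $H^*$-tiling of $G\setminus A$ missing at most $C=C(H)$ vertices. Third, these leftover vertices form $L$, and by construction $A\cup L$ tiles perfectly, producing the desired perfect $H$-subdivision tiling of $G$.

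\textbf{Main obstacle.} Constructing the absorbing set is the technical crux, and the design splits into two subcases. When $hcf_\xi(H)=1$, the many bipartite subdivisions of $H$ give part-differences generating $\ZZ$, so a B\'{e}zout-type argument lets the absorber handle arbitrary bounded-size leftovers. When $hcf_\xi(H)\geq 3$, bipartite subdivisions alone cannot cover every residue; however, the hypothesis $\delta(G)>(1/2+\gamma)n$ guarantees $G$ is far from bipartite, permitting the use of \emph{non-bipartite} subdivisions of $H$ (with paths of mixed parities traversing an odd cycle of $G$) to supply the missing flexibility. A secondary technicality is to ensure the leftover produced by the almost-perfect tiling step satisfies the divisibility condition required by $A$; this is typically arranged by small local adjustments using a small reserve of $H^*$-copies just before the final absorption.
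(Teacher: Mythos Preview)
Your lower bound is correct and matches the paper's \Cref{prop:lowerbound-hcf} and \Cref{prop:hcf-not-2} essentially verbatim.

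For the upper bound when $hcf_\xi(H)\geq 3$, your approach is more work than necessary. Since $\xi^*(H)=2$ here, one has $\delta(G)\geq(1/2+\gamma)n$, and the paper dispatches this case directly in \Cref{prop:not-exceed-1/2}: remove a single copy of a suitably padded subdivision $F$ of $H^*$ with $\chi(F)\leq 3$ to fix divisibility, then apply \Cref{thm:kuhn-osthus} to the remainder (which has $\chi^*(H^*)\leq 2$). No absorber is needed at all. Your idea of using non-bipartite subdivisions through odd cycles is plausible, since $\delta(G)>n/2$ does force many triangles through every vertex, but it is an unnecessary detour.

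The serious gap is the case $hcf_\xi(H)=1$. Your sentence ``Using the large minimum degree, every vertex lies in many absorbing gadgets of this type'' is exactly the step that \emph{fails}. The minimum degree $(1-1/\xi(H)+\gamma)n$ can be far below $n/2$, and the extremal graphs are themselves bipartite; in a bipartite host graph, $N(v)$ is independent for every $v$, so no single vertex can be absorbed by ``inserting it into an edge of a subdivision''. Even pair-absorbers of the paper's type-$1$ kind only work for pairs straddling the bipartition, so you cannot absorb an arbitrary bounded leftover, and a bare B\'{e}zout argument on part-differences does not produce the needed gadgets. The paper addresses precisely this (it flags it explicitly in the introduction): it applies the regularity lemma, uses \Cref{lem:bounded-domination} to find a tiny dominating set $D$ in the reduced graph, and shows via regularity that every good vertex forms an edge of the $d$-absorbing graph with many vertices of its dominating cluster. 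The absorber built by \Cref{lem:type1-absorber} can then only absorb sets with a perfect matching in this absorbing graph, and the paper plants small complete-bipartite reservoirs $Q_i$ inside each matched regular pair $(X_i,Y_i)$; after \Cref{lem:cover-almost} leaves $\leq C$ vertices uncovered, vertices are borrowed from the $Q_i$ (via \Cref{obs:hat-H-reservoir}) to balance the leftover into matchable pairs. Your outline contains none of this structure, and without it the absorbing step does not go through.
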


This theorem asymptotically determines $\delta_{\mathrm{sub}}(n, H)$ as long as $\hcf_{\xi}(H) \neq 2$. If $\hcf_{\xi}(H) = 2$, then the parity of $n$ is also important. The following theorem asymptotically determines $\delta_{\mathrm{sub}}(n, H)$ for this case.

\begin{theorem}\label{thm:hcf-2}
    Let $H$ be a graph with $\hcf_{\xi}(H) = 2$. For every $\gamma > 0$, there exists an integer $n_0 = n_0(\gamma, H)$ such that the following holds. For every integer $n \geq n_0$,
    \begin{align*}
        \frac{1}{2}n - 1 &\leq \delta_{\mathrm{sub}}(n, H) \leq \left(\frac{1}{2} + \gamma \right)n &&\text{if } n \text{ is odd,}\\ 
        \left(1 - \frac{1}{\xi^*(H)} \right)n - 1 &\leq \delta_{\mathrm{sub}}(n, H) \leq \left(1 - \frac{1}{\xi^*(H)} + \gamma \right)n &&\text{if } n \text{ is even.}
    \end{align*}
\end{theorem}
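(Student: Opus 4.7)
\begin{proofsketch}
The key structural observation driving the proof is that $hcf_\xi(H)=2$ forces every bipartite subdivision of $H$ to cover an even number of vertices and every vertex of $H$ to have even degree. Indeed, a bipartite subdivision of $H$ arising from a partition $(X,Y)$ of $V(H)$ has parts of size $|X|+e(H[Y])$ and $|Y|+e(H[X])$, whose difference lies in $\calC(H)\subseteq 2\ZZ$; so the two parts have the same parity and the total is even. Combining the partitions $X=\emptyset$ and $X=\{v\}$ also forces $\deg(v)$ to be even for every $v\in V(H)$, so in particular $H$ contains a cycle. These two facts control both the parity-sensitive lower bound and the odd-$n$ reduction used in the upper bound.

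For the lower bounds: when $n$ is odd, take $G=K_{\lfloor n/2\rfloor,\lceil n/2\rceil}$, so that $\delta(G)=\lfloor n/2\rfloor$. Every subdivision of $H$ in $G$ is bipartite, and by the parity fact its two parts have the same parity, so the sizes contributed to the two sides of $G$ differ by an even number. Summing over a hypothetical perfect tiling, $|A|-|B|$ would be even, contradicting $\lceil n/2\rceil-\lfloor n/2\rfloor=1$; hence $\delta_{sub}(n,H)\ge\lfloor n/2\rfloor+1\ge n/2-1$. When $n$ is even, I would reuse the space-barrier construction that realizes the lower bound $(1-1/\xi^*(H))n-1$ in the proof of Theorem~\ref{thm:main}: the same extremal graph works, since the divisibility barrier is inactive in the even regime.

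For the upper bound in the even case, with $\delta(G)\ge(1-1/\xi^*(H)+\gamma)n$, I would run the machinery anticipated for the proof of Theorem~\ref{thm:main} essentially unchanged: the regularity lemma, a connecting lemma building an absorbing structure by gluing short subdivisions of $H$ through super-regular pairs, an almost-perfect $H$-subdivision tiling of the bulk driven by $\xi(H)$, and a final absorption step. The crucial point is that for even $n$ the parity constraint is vacuous: any collection of bipartite subdivisions of $H$ covers an even number of vertices, so no extra balancing beyond that already performed in Theorem~\ref{thm:main} is required. For odd $n$, I reduce to the even case by first peeling off one small non-bipartite subdivision. Using the cycle in $H$ and adjusting edge lengths (lengthening one edge of the cycle by $1$ to break bipartiteness, and a further edge by $2$ to fix parity if needed), I can produce a non-bipartite subdivision $H_0$ of $H$ with $v(H_0)=c=O_H(1)$ and $v(H_0)$ odd. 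The condition $\delta(G)\ge(\tfrac{1}{2}+\gamma)n$ embeds one copy of $H_0$ greedily; deleting it leaves a graph $G'$ on $n-c$ (even) vertices with $\delta(G')\ge(\tfrac{1}{2}+\gamma/2)n\ge(1-1/\xi^*(H)+\gamma/2)(n-c)$ (using $\xi^*(H)\le 2$), to which the even case just proved applies and completes the tiling.

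The main technical obstacle I anticipate is the absorbing lemma for the even case: the absorber must be able to swallow any admissible leftover set of even size using bipartite subdivisions of $H$ whose part-size pairs live in the sublattice forced by $hcf_\xi(H)=2$. Proving that constantly many short subdivision types of $H$ generate every sufficiently large target modulo this lattice is the arithmetic heart of the proof; once that is in place, plugging it into the regularity/absorbing framework of Theorem~\ref{thm:main} closes the argument.
\end{proofsketch}
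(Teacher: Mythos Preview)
Your lower bounds are mostly right, but you miss that for even $n$ the bound $(1-1/\xi^*(H))n-1$ requires a separate construction when $\xi^*(H)=3/2>\xi(H)$: the space barrier only gives $(1-1/\xi(H))n-1$, and the paper uses a different graph (a clique disjoint from $K_{\lfloor n/3\rfloor-1,\lfloor n/3\rfloor}$) whose bipartite component hits the parity obstruction to get the $n/3$ bound. So the divisibility barrier is \emph{not} inactive in the even regime.

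For odd $n$, your reduction to the even case is a reasonable alternative, but ``embeds one copy of $H_0$ greedily'' fails as written: with only $\delta(G)\ge(\tfrac12+\gamma)n$ you need $\chi(H_0)\le 3$, and your construction may give $H_0=H=K_7$. The fix is to start from a bipartite subdivision of $H$ and subdivide one more edge, which is exactly what the paper does (Proposition~\ref{prop:not-exceed-1/2}); the paper then applies K\"uhn--Osthus directly to the rest rather than invoking the even case.

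The real gap is the even-$n$ upper bound. The dominating-set absorber of Theorem~\ref{thm:main} does \emph{not} transfer ``essentially unchanged''. That argument rebalances each local pair $(X_i',P_i')$ by pulling vertices from a $\hat{H}$-reservoir $Q_i$; with $hcf_\xi(H)=1$ the reservoir can shift the balance by any integer, but with $hcf_\xi(H)=2$ it can only shift by even amounts. So if the leftover hits some $X_i'$ and $P_i'$ with an odd difference, you cannot produce a perfect matching in the absorbing graph and the absorption fails. This is a structural obstruction, not an arithmetic one about generating a lattice. The paper resolves it by a dichotomy on the reduced graph $R$: if $R$ contains a $C_3$ or $C_5$, they build new ``type-2'' or ``type-3'' absorber units (using that short odd cycle) that can absorb a \emph{single} vertex, together with an exchanging mechanism, eliminating the parity issue entirely; if $R$ is $\{C_3,C_5\}$-free, Andr\'asfai's theorem forces the regularized graph to be bipartite, and then one global complete-bipartite reservoir suffices because the even global parity of the leftover automatically matches the two sides. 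Your sketch does not anticipate this case split or the need for single-vertex absorbers.
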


As a direct consequence of \Cref{thm:main,thm:hcf-2}, we determine the value of $\delta_{\mathrm{sub}}(n, K_r)$ for each $r \geq 2.$ By applying our main theorems directly, we have $\delta_{\mathrm{sub}}(n, K_2) = \left(\frac{1}{3} + o(1) \right)n$, and for each $r \in \{3, 4, 5\}$, we have $\delta_{\mathrm{sub}}(n, K_r) = \left(\frac{2}{r+1} + o(1) \right)n$. For the case $r = 7$, if $n$ is even, we have $\delta_{\mathrm{sub}}(n, K_7) = \left(\frac{1}{3} + o(1) \right)n$; otherwise, we have $\delta_{\mathrm{sub}}(n, K_7) = \left(\frac{1}{2} + o(1) \right)n$. Finally, for every $r \geq 8$ and $r = 6$, we have $\delta_{\mathrm{sub}}(n, K_r) = \left(\frac{1}{2} + o(1) \right)n$. This is in contrast to the normal $H$-tiling problem.

This means the determining factors for the minimum degree thresholds of perfect $H$-tilings and perfect $H$-subdivision tilings are essentially different. Probably, the most interesting difference between the perfect $H$-tiling and the perfect $H$-subdivision tiling is that monotonicity does not hold for subdivision tiling. For a perfect tiling, if $H_2$ is a spanning subgraph of $H_1$, then obviously $\delta(n, H_2) \leq \delta(n, H_1).$ However, for perfect subdivision tiling, this does not hold in many cases. For example, our results imply $\delta_{\mathrm{sub}}(n, K_4) = \frac{2}{5}n + o(n) < \delta_{\mathrm{sub}}(n, C_4) = \frac{1}{2}n + o(n).$

As $\xi^*(H)$ is the determining factor for the minimum degree threshold, it is convenient for us to specify the bipartite subdivision achieving the value $\xi^*(H).$ We introduce the following definition.

\begin{definition}
    Let $H$ be a graph. We denote by $X_H$ the subset of $V(H)$, where $f_H(X_H) = \xi(H).$ We define a graph $H^*$ obtained from $H$ by replacing all edges in $H[X_H]$ and $H[V(H) \setminus X_H]$ with paths of length two.
\end{definition}

There can be multiple choices for $X_H$. Then we fix one choice for $X_H$ so that $X_H$ and $H^*$ are uniquely determined for all $H$. Note that $H^*$ is a subdivision of $H$, which is a bipartite graph, and $v(H^*) = v(H) + e(H[X_H]) + e(H[V(H) \setminus X_H])$. Intuitively, $\xi(H)$ seems greater than $\chi_{cr}(H^*)$. Indeed, this intuition is correct. However, \Cref{thm:main} cannot be directly deduced by applying \Cref{thm:kuhn-osthus} for $H^*$. For example, for a connected graph $H$, it may satisfy $\chi^*(H^*) = 2$, but $\xi^*(H) < 2$ is possible. Then the minimum degree threshold in \Cref{thm:main} is smaller than the one in \Cref{thm:kuhn-osthus} when $\hcf_{\xi}(H) = 1$.

However, as we will see in \Cref{obs:xi-chicr}, for every graph $H$ with at least one edge, the inequality $\xi(H) \geq \chi_{cr}(H^*)$ holds. Hence, if $\hcf_{\xi}(H) \leq 2$, we may instead use \Cref{thm:chritical-constant-missing} to find an $H^*$-tiling that covers all but at most a constant number of vertices of $G$ in a graph $G$ with $\delta(G) \geq \left(1 - \frac{1}{\xi(H)} + \gamma \right)n$. To cover the leftover vertices, we use the absorption method. The absorption method was introduced in~\cite{rodl2006dirac}, and since then, it has been used to solve various crucial problems in extremal combinatorics. The main difficulty in applying the absorption method in our setting is that, in many cases, the host graph is not sufficiently dense to guarantee that any vertices can be absorbed in the final step. To overcome this difficulty, we use the regularity lemma and an extremal result on the domination number to obtain some control over the vertices that can be absorbed.       
    
%---------------------------------------------------------------
\section{Preliminaries}\label{sec:prelim}

We write $[n] = \{1, \dots , n\}$ for a positive integer $n$. If we claim a result holds if $\beta \ll \alpha_1, \dots, \alpha_t$, then it means there exists a function $f$ such that $\beta \leq f(\alpha_1, \dots, \alpha_t)$. We will not explicitly compute these functions. In this paper, we consider $o(1)$ to go to zero as $n$ goes to infinity.

Let $G$ be a graph. We denote the vertex set and edge set of $G$ by $V(G)$ and $E(G)$, respectively, and we set $v(G) = |V(G)|$ and $e(G) = |E(G)|$. We write $d_G(v)$ for the degree of $v \in V(G)$, and we omit the subscript if the graph $G$ is clear from the context. We denote by $\delta(G)$ and $\Delta(G)$ the minimum degree of $G$ and the maximum degree of $G$, respectively. For a vertex $v \in V(G)$, we denote by $N_G(v)$ the set of neighbors of $v$ in $G$ which are adjacent to $v$. We also omit the subscript if $G$ is clear from the context.

For a graph $G$ and a vertex subset $X \subseteq V(G)$, we denote by $G[X]$ the subgraph of $G$ induced by $X$. For vertex subsets $A$ and $B$ of $G$, we denote by $G[A, B]$ the graph where $V(G[A, B]) = A \cup B$ and $E(G[A, B]) = \{uv \in E(G) : u \in A, v \in B\}$. 
Let $v$ be a vertex of $G$ and $X$ be a vertex subset of $G$. We denote by $d_G(v; X)$ the degree of $v$ in the induced graph $G[X \cup \{v\}]$.

Let $G$ be a graph and $X \subseteq V(G)$. We denote by $G - X$ the induced graph $G[V(G) \setminus X]$. If $X$ is a single vertex $v$, we simply denote $G - v$. Similarly, if a graph $H$ is a subgraph of $G$, then we denote by $G - H$ the induced subgraph $G[V(G) \setminus V(H)]$. Let $G'$ be a subgraph of $G$ and $v \in V(G)$. We denote by $G' + v$ a graph such that $V(G' + v) = V(G') \cup \{v\}$ and $E(G' + v) = E(G') \cup E(G[\{v\}, V(G')])$.

We denote by $K_r$ the complete graph of order $r$ and $K_{n, m}$ the complete bipartite graph with bipartition of sizes $n$ and $m$. We also denote by $C_k$ and $P_k$ the cycle of length $k$ and the path of length $k$, respectively. 
We say $H'$ is a \emph{1-subdivision} of $H$ if $H'$ is obtained from $H$ by replacing all edges of $H$ with vertex-disjoint paths of length two. We denote by $H^1$ the 1-subdivision of $H$. We write $\mathrm{Sub}(H)$ for the collection of all subdivisions of $H$. For a graph $F \in \mathrm{Sub}(H)$, we say a vertex $v \in V(F) \cap V(H)$ is a \emph{branch vertex} of $F$.

In the proofs of the main theorems, we need to count the number of copies of specific bipartite subgraphs where each bipartition is contained in certain vertex subsets. In order to facilitate this counting, we introduce the notion of embedding. Let $H$ and $G$ be graphs. An \emph{embedding} $\phi$ of $H$ into $G$ is an injective function from $V(H)$ to $V(G)$ such that for any $uv \in E(H)$, its image $\phi(u) \phi(v)$ is in $E(G)$.

The next simple observation will be used in \Cref{sec:absorber}.
\begin{observation}\label{obs:delet-small-vertices}
    Let $0 \leq \gamma, \frac{1}{n} \ll \frac{1}{t}, d < 1$.
    Let $\calB$ be an $n$-vertex $t$-uniform hypergraph with $|E(\calB)| \geq dn^t$. Let $A \subseteq V(\calB)$ be a vertex subset of size at most $\gamma n$. Then we have $|E(\calB - A)| \geq \frac{d}{2}n^t$.
\end{observation}
As every vertex is contained in at most $n^{t - 1}$ edges, by choosing $\gamma$ sufficiently small, we can easily verify that \Cref{obs:delet-small-vertices} holds.
In the following two subsections, we introduce powerful tools in extremal graph theory, so-called supersaturation, and the regularity lemma.

%---------------------------------------------------------------
\subsection{The supersaturation}\label{subsec:supersaturation}

One of the most fundamental problems in extremal graph theory is to determine the maximum number of edges in an $n$-vertex graph that does not contain a copy of a specific graph as a subgraph. For a graph $H$ and a positive integer $n$, we denote by $\mathrm{ex}(n, H)$ the \emph{extremal number} of $H$, defined as the maximum integer $k$ such that there is an $n$-vertex graph $G$ with $k$ edges not containing $H$ as a subgraph.

A classical theorem of Turán determined the exact value of $\mathrm{ex}(n, K_r)$. For a graph $H$, we define the \emph{Turán density} of $H$ as $\pi(H) \defeq \lim_{n \to \infty} \frac{\mathrm{ex}(n, H)}{\binom{n}{2}}$. The existence of Turán density for every graph $H$ was proved by Katona, Nemetz, and Simonovits~\cite{katona1964graph}. For every graph $H$, the Erdős-Stone-Simonovits~\cite{erdHos1965limit, erdos1946structure} theorem states that $\pi(H) = \left(1 - \frac{1}{\chi(H)-1}\right)$. We note that if a graph $H$ is bipartite, the Turán density satisfies $\pi(H) = 0$.

Assume that an $n$-vertex graph $G$ has many more edges than $\mathrm{ex}(n, H)$. Intuitively, we can expect there to be a lot of copies of $H$ in $G$. Indeed, this is true, and this phenomenon is called \emph{supersaturation}, proved by Erdős and Simonovits~\cite{erdHos1983supersaturated}, as follows.

\begin{theorem}[Supersaturation]\label{thm:supersaturation}
    Let $0 < \delta \ll \varepsilon, \frac{1}{h} \leq 1$.
    For every graph $H$ on $h$ vertices, the following holds. If $G$ is an $n$-vertex graph with $e(G) \geq (\pi(H) + \varepsilon)\binom{n}{2}$, then $G$ contains at least $\delta n^h$ copies of $H$.
\end{theorem}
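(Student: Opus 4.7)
The plan is to combine the Erd\H{o}s--Stone--Simonovits theorem with a standard averaging argument over random vertex subsets, the classical sampling strategy for supersaturation. First, I would fix an integer $m = m(H, \ve)$, sufficiently large and in particular much larger than $h$, so that every $m$-vertex graph with more than $(\pi(H) + \ve/2)\binom{m}{2}$ edges contains a copy of $H$; this is essentially the only nontrivial input, and it is exactly what Erd\H{o}s--Stone--Simonovits provides.

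Next, I would sample $S \subseteq V(G)$ uniformly among $m$-subsets and compute
\[
    \mathbb{E}[e(G[S])] \;=\; e(G)\cdot\frac{\binom{m}{2}}{\binom{n}{2}} \;\geq\; (\pi(H) + \ve)\binom{m}{2}.
\]
Since $e(G[S])/\binom{m}{2} \in [0,1]$ pointwise, a one-line Markov-type estimate shows that at least an $\ve/2$ fraction of the $m$-subsets $S$ satisfy $e(G[S]) \geq (\pi(H) + \ve/2)\binom{m}{2}$, and by the choice of $m$ each such $S$ spans at least one copy of $H$.

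Finally, I would conclude by double counting. Every copy of $H$ in $G$ occupies an $h$-set and is therefore contained in exactly $\binom{n-h}{m-h}$ of the $m$-subsets; writing $N$ for the number of copies of $H$ in $G$, one obtains
\[
    N \;\geq\; \frac{(\ve/2)\binom{n}{m}}{\binom{n-h}{m-h}} \;=\; \frac{\ve}{2}\prod_{i=0}^{h-1}\frac{n-i}{m-i} \;\geq\; \delta n^h
\]
for a suitably small constant $\delta = \delta(H, \ve) > 0$ once $n$ is large. The constants are chosen in the order $h, \ve \to m \to \delta \to n$, which is consistent with the hypothesis $\delta \ll \ve, 1/h$. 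There is no genuine obstacle: all the conceptual content sits in the Erd\H{o}s--Stone--Simonovits input, while the rest is routine averaging and double counting.
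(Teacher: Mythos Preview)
Your argument is correct and is the standard proof of supersaturation: fix a threshold $m$ via Erd\H{o}s--Stone--Simonovits, average over $m$-subsets to find a positive proportion that are dense enough to contain $H$, and double count. The estimates are all fine; in particular the Markov step and the ratio $\binom{n}{m}/\binom{n-h}{m-h} = \prod_{i=0}^{h-1}(n-i)/(m-i)$ are correct, and the order of choosing constants matches the $\delta \ll \ve, 1/h$ convention.

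There is nothing to compare against, however: the paper does not prove this theorem. It is quoted as a result of Erd\H{o}s and Simonovits and used as a black box. What you have written is essentially their original argument, so your proposal is both correct and faithful to the literature, even though the paper itself provides no proof.
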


Since the Turán density of a bipartite graph $H$ is zero, \Cref{thm:supersaturation} leads to the following lemmas.

\begin{lemma}\label{lem:Kab-supersaturation}
    Let $0 < \delta \ll \frac{1}{a}, \frac{1}{b}, \varepsilon \leq 1$ and $n$ be a positive integer. For every complete bipartite graph $H$ on bipartition $A$ and $B$ with $|A| = a$ and $|B| = b$, the following holds. Let $G$ be a bipartite graph with bipartition $X$ and $Y$ such that $|X|, |Y| \leq n$. If $e(G) \geq \varepsilon n^2$, then the number of embeddings $\phi : V(H) \to V(G)$ is at least $\delta n^{a+b}$, where $\phi(A) \subseteq X$ and $\phi(B) \subseteq Y$.
\end{lemma}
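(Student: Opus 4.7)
\begin{proofsketch}
The plan is to deduce this from the supersaturation result (\Cref{thm:supersaturation}). Viewing $G$ as a graph on at most $2n$ vertices and using that $\pi(K_{a,b}) = 0$ (since $K_{a,b}$ is bipartite, by Erd\H{o}s--Stone--Simonovits), the hypothesis $e(G) \geq d n^2 \geq (d/4)\binom{2n}{2}$ lets me apply \Cref{thm:supersaturation} with density parameter $d/8$ and with $h=a+b$ to obtain at least $c(d,a,b)\cdot n^{a+b}$ unlabeled copies of $K_{a,b}$ in $G$.

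It remains to convert these copies into embeddings with the specified orientation $\phi(A)\subseteq X$, $\phi(B) \subseteq Y$. Because $G$ is bipartite with parts $X, Y$ and each color class of $K_{a,b}$ is an independent set, every copy of $K_{a,b}$ in $G$ has its two parts sitting entirely inside $X$ and $Y$, in one of two possible orientations. Each copy oriented with its $a$-part inside $X$ then contributes exactly $a!\,b!$ distinct embeddings of the desired form, so it suffices to show that $\Omega(n^{a+b})$ of the copies are oriented this way.

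The main obstacle is precisely this orientation bookkeeping: if $X, Y$ are unbalanced and $a \neq b$, a majority of supersaturation-copies could a priori have the ``wrong'' orientation (for instance the smaller part fitting in the smaller side). I would bypass this by redoing the supersaturation estimate directly in the bipartite setting, using the same convexity idea that underlies \Cref{thm:supersaturation}. Concretely, writing $d(y) := |N_G(y)|$ for $y \in Y$, two applications of Jensen's inequality (first to $\sum_{y \in Y}\binom{d(y)}{a}$, using $\sum_y d(y) = e(G) \geq d n^2$, then to the resulting codegree sequence on $\binom{X}{a}$) produce at least
\[
    \Omega\!\left( |X|^{a}\,|Y|^{b}\,(e(G)/(|X||Y|))^{ab} \right) \;=\; \Omega(n^{a+b})
\]
ordered pairs $(S,T)$ with $S\in\binom{X}{a}$, $T\in\binom{Y}{b}$, and $G[S,T]$ complete bipartite. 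Multiplying by $a!\,b!$ gives the required embedding count, with constants depending only on $d,a,b$. The rest is routine bookkeeping.
\end{proofsketch}
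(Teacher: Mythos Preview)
Your proof is correct, but it takes a different route from the paper's. You correctly identify the orientation issue when applying \Cref{thm:supersaturation} to $K_{a,b}$ directly, and then resolve it by abandoning the black box and running the K\H{o}v\'ari--S\'os--Tur\'an convexity count by hand in the oriented bipartite setting. The paper instead sidesteps the orientation problem with a symmetry trick: assuming $a\le b$, it applies \Cref{thm:supersaturation} to the \emph{symmetric} graph $K_{b,b}$ to get $\delta n^{2b}$ copies, notes that any $K_{b,b}$ in a bipartite $G$ automatically has one side of size $b$ in $X$ and the other in $Y$, and then double-counts pairs (correctly oriented $K_{a,b}$, $K_{b,b}$ containing it), using that each such $K_{a,b}$ extends to at most $n^{b-a}$ copies of $K_{b,b}$. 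This yields $\delta n^{a+b}$ correctly oriented copies with essentially no extra work. So the paper's argument is shorter and keeps supersaturation as a black box, while yours is more self-contained and does not rely on \Cref{thm:supersaturation} at all; both are perfectly valid.
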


\begin{proof}
    Assume $a \leq b$ and let $v(G) = n'$. Since $e(G) \geq \varepsilon n^2$, we observe that the inequality $\sqrt{2 \varepsilon} n \leq n' = |A| + |B| \leq 2n$ holds. Thus, if we choose $n$ sufficiently large, then also $n'$ is sufficiently large, and we have $e(G) \geq \varepsilon n'^2$. By \Cref{thm:supersaturation}, the number of copies of $K_{b, b}$ in $G$ is at least $\delta' n'^{2b}$ for some $\delta' > 0$. We observe that for each copy of $H$ in $G$, where $A$ is embedded in $X$, there are at most $(n')^{b - a}$ copies of $K_{b, b}$ in $G$. Thus, by double counting, the number of embeddings of $H$ in $G$, where $A$ is embedded in $X$ and $B$ is embedded in $Y$, is at least $\delta' (n')^{a + b} \geq \delta n^{a + b}$.
\end{proof}

\begin{lemma}\label{lem:fix-embedding}
    Let $0 < d \ll \varepsilon, \frac{1}{h} \leq 1$. For every $h$-vertex bipartite graph $H$ on bipartition $A$ and $B$, the following holds.
    Let $G$ be an $n$-vertex graph with minimum degree at least $\varepsilon n$. Let $X \subseteq V(G)$ be a set of at least $\varepsilon n$ vertices. 
    Then there are at least $\delta n^{h}$ distinct embeddings $\phi : V(H) \to V(G)$, where $\phi(A) \subseteq X$.
\end{lemma}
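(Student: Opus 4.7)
The plan is to reduce to \Cref{lem:Kab-supersaturation} applied to the complete bipartite graph $K_{|X|,|Y|}$ containing $H$, where $(X,Y)$ is the bipartition of the bipartite graph $H$. To do this, I first need to extract from $G$ a bipartite subgraph $G'$ that carries $\Omega(n^2)$ edges and has one part entirely contained in $B$; any embedding of $K_{|X|,|Y|}$ into $G'$ with the $X$-part mapped into the $B$-side restricts to an embedding of $H$ into $G$ with $\phi(X)\subseteq B$, and distinct $K_{|X|,|Y|}$-embeddings yield distinct $H$-embeddings, since both are just injective vertex maps.

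To construct $G'$, I would use the minimum degree condition to count edge endpoints in $B$:
\[
    2e(G[B]) + e\bigl(G[B, V(G)\setminus B]\bigr) \;=\; \sum_{v\in B} d_G(v) \;\geq\; \ve n \cdot |B| \;\geq\; \ve^2 n^2.
\]
So at least one of the two terms is $\Omega(\ve^2 n^2)$, and I would split into two cases accordingly. If $e(G[B, V(G)\setminus B]) \geq \ve^2 n^2/2$, then $G[B, V(G)\setminus B]$ is already bipartite with one part equal to $B$, and I set $G' \defeq G[B, V(G)\setminus B]$. Otherwise $e(G[B]) \geq \ve^2 n^2/4$, and I take a maximum cut of $G[B]$ to obtain a partition $B = B_1 \sqcup B_2$ with at least half the edges of $G[B]$ crossing, giving $e(G[B_1,B_2]) \geq \ve^2 n^2/8$; I then set $G' \defeq G[B_1,B_2]$ with the $B_1$-side playing the role of the distinguished part, noting $B_1 \subseteq B$.

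Now I would apply \Cref{lem:Kab-supersaturation} to $G'$ with $\ve$ replaced by $\ve^2/8$ and with $K_{|X|,|Y|}$ in place of the complete bipartite graph (note $|X|+|Y| = v(H) = h$ and both sides of $G'$ have size at most $n$). This produces $\delta' n^{v(H)}$ embeddings $\phi : V(K_{|X|,|Y|})\to V(G)$ with $\phi(X)$ contained in the designated part, hence in $B$. Restricting each such embedding to $H \subseteq K_{|X|,|Y|}$ gives the desired $\delta n^{v(H)}$ distinct embeddings of $H$ into $G$ with $\phi(X)\subseteq B$. I do not expect any genuine obstacle here; the lemma is really just the combination of the degree condition with bipartite supersaturation, and the only mildly delicate point is the case split between edges inside $B$ versus edges leaving $B$, together with the bookkeeping of the constants $\delta \ll \ve, 1/h$.
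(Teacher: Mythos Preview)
Your proof is correct and follows essentially the same approach as the paper: split into cases according to whether the edges incident to $B$ lie mostly inside $B$ or mostly cross to $V(G)\setminus B$, and in each case invoke supersaturation. The only minor difference is that when $e(G[B]) \geq \ve^2 n^2/4$, the paper applies \Cref{thm:supersaturation} directly to $G[B]$ (since $\pi(H)=0$ for bipartite $H$, and any copy of $H$ inside $B$ trivially has $\phi(X)\subseteq B$), whereas you take a max cut first so as to reduce both cases uniformly to \Cref{lem:Kab-supersaturation}; either variant works.
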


\begin{proof}
    Let $X' \defeq V(G) \setminus X$. If $e(G[X]) \geq \frac{\varepsilon^2}{4} n^2$, by \Cref{thm:supersaturation}, the induced graph $G[X]$ contains at least $\delta n^{h}$ distinct copies of $H$. Thus, in this case, the lemma is proved. We now assume $e(G[X]) \leq \frac{\varepsilon^2}{4} n^2$. Then, by the minimum degree condition of $G$, the number of edges in the bipartite subgraph $G[X, X']$ is at least $\frac{\varepsilon^2}{4}n^2$. Then, by \Cref{lem:Kab-supersaturation}, for the bipartite graph $G[X, X']$, there are at least $\delta n^{h}$ distinct desired embeddings. This proves the lemma. 
\end{proof}

%---------------------------------------------------------------
\subsection{The regularity lemma}\label{subsec:regularity}

Szemerédi's regularity lemma~\cite{szemeredi1975regular} is a powerful tool for dealing with large dense graphs. To formulate the regularity lemma, we need to define an $\ve$-regular pair. Let $G$ be a graph and $A, B \subseteq G$. We define the density between $A$ and $B$ as $d_G(A, B) \defeq \frac{e(G[A, B])}{|A||B|}$. The following is the definition of an $\ve$-regular pair.

\begin{definition}
    Let $\ve > 0$ and $d \in [0, 1]$. A pair $(A, B)$ of disjoint subsets of vertices in a graph $G$ is an \emph{$(\ve, d)$-regular pair} if the following holds. For any subset $A' \subseteq A$, $B' \subseteq B$ with $|A'| \geq \ve |A|$ and $|B'| \geq \ve |B|$,
    \begin{equation*}
        |d_G(A', B') - d| \leq \ve.
    \end{equation*}
    If a pair $(A, B)$ is $(\ve, d)$-regular for some $d \in [0, 1]$, we say $(A, B)$ is an $\ve$-regular pair. We also say a pair $(A, B)$ is $(\ve, d+)$-regular if $(A, B)$ is $(\ve, d')$-regular for some $d' \geq d$.
\end{definition}

The following is the degree form of the regularity lemma~\cite[Theorem 1.10]{komlos1996szemeredi}.

\begin{lemma}[Regularity Lemma-degree form]\label{lem:regularity}
    Let $0 < \frac{1}{T_0} \ll \frac{1}{t_0}, \ve \leq 1$. For every real number $d \in [0, 1]$ and every graph $G$ with order at least $T_0$, there exists a partition of $V(G)$ into $t+1$ clusters $V_0, V_1, \dots , V_t$ and a spanning subgraph $G_0$ of $G$ such that the following holds:
    
    \begin{itemize}
        \item[$\bullet$] $t_0 \leq t \leq T_0$,
        \item[$\bullet$] $|V_0| \leq \ve v(G)$,
        \item[$\bullet$] $|V_1| = \cdots = |V_t|$,
        \item[$\bullet$] $G_0[V_i]$ has no edge for each $i \in [t]$,
        \item[$\bullet$] $d_{G_0}(v) \geq d_G(v) - (d + \ve)v(G)$ for all $v \in V(G)$,
        \item[$\bullet$] for all $1 \leq i < j \leq t$, either the pair $(V_i, V_j)$ is an $(\ve, d+)$-regular pair or the graph $G_0[A, B]$ has no edge.
    \end{itemize}
\end{lemma}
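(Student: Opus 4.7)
The plan is to derive this degree form as a standard clean-up of the original Szemerédi Regularity Lemma. First I would apply the classical regularity lemma to $G$ with parameter $\varepsilon' \ll \varepsilon, d$ and lower cluster bound $t_0' \geq t_0$, obtaining an equipartition $V(G) = W_0 \cup W_1 \cup \cdots \cup W_t$ with $t_0' \leq t \leq T_0'$, $|W_0| \leq \varepsilon' v(G)$, equal parts $W_1,\dots,W_t$, and at most $\varepsilon'\binom{t}{2}$ pairs $(W_i,W_j)$ that fail to be $\varepsilon'$-regular. I will then build $G_0$ by deleting four kinds of edges, and move an additional small set of vertices into the exceptional cluster:
\begin{enumerate}
    \item all edges inside any $W_i$;
    \item all edges in any pair $(W_i,W_j)$ that is not $\varepsilon'$-regular;
    \item all edges in any pair $(W_i,W_j)$ whose density is less than $d$;
    \item all edges incident to $W_0$.
\end{enumerate}

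The central step is to control, for every surviving vertex $v$, how many edges incident to $v$ have been removed. The edges in item (1) cost $v$ at most $|W_i| \leq v(G)/t \leq v(G)/t_0'$, which is negligible. The edges in item (4) cost at most $|W_0| \leq \varepsilon' v(G)$. For item (3), in any $\varepsilon'$-regular pair $(W_i,W_j)$ of density less than $d$, at most $\varepsilon' |W_i|$ vertices of $W_i$ have more than $(d+\varepsilon')|W_j|$ neighbors in $W_j$; so each vertex either loses at most $(d+\varepsilon')|W_j|$ along this pair or is ``$j$-exceptional''. For item (2), the main obstacle is that the $\varepsilon'\binom{t}{2}$ irregular pairs need not be spread evenly among the clusters, and a single vertex can be touched by many irregular pairs. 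I would handle this exactly as in the standard derivation by a two-stage double-counting argument: first, at most $2\sqrt{\varepsilon'}\, t$ clusters can lie in more than $\sqrt{\varepsilon'}\, t$ irregular pairs, so I move those whole clusters into $W_0$; second, inside each surviving cluster $W_i$, at most $\sqrt{\varepsilon'}|W_i|$ vertices can be $j$-exceptional for more than $\sqrt{\varepsilon'}\, t$ indices $j$, so I also move those vertices into $W_0$.

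After these adjustments, I let $V_0$ be the enlarged exceptional set. Since the extra mass added to $W_0$ is $O(\sqrt{\varepsilon'})\, v(G)$, choosing $\varepsilon' \ll \varepsilon^{2}$ keeps $|V_0|\leq \varepsilon v(G)$. To restore an equipartition $|V_1|=\cdots=|V_t|$ on the remaining clusters (which have slightly shrunk), I trim each cluster down to its common minimum size and push the trimmings into $V_0$; the trimming loss per cluster is at most $\sqrt{\varepsilon'}|W_i|$, and the resulting $|V_0|$ is still bounded by $\varepsilon v(G)$ with room to spare. For a surviving vertex $v \in V_i$, the total number of deleted incident edges is then at most
\[
\tfrac{v(G)}{t_0'} + \varepsilon' v(G) + \sqrt{\varepsilon'}\, v(G) + (d+\varepsilon')v(G) + \sqrt{\varepsilon'}\, v(G) \;\leq\; (d+\varepsilon)\, v(G),
\]
so $d_{G_0}(v) \geq d_G(v) - (d+\varepsilon) v(G)$ as required.

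Finally, the conditions that $G_0[V_i]$ has no edge and that every surviving pair is either $(\varepsilon,d+)$-regular or empty follow directly from the deletion rules, as long as $\varepsilon' \leq \varepsilon$ (so $\varepsilon'$-regular pairs are also $\varepsilon$-regular, with density at least $d-\varepsilon'\geq d-\varepsilon$, and any edge we kept lives in a pair of density at least $d$). The hierarchy I would fix is $\frac{1}{T_0} \ll \frac{1}{T_0'} \ll \frac{1}{t_0'} \ll \frac{1}{t_0}, \varepsilon' \ll \varepsilon$, which absorbs all the implicit constants. The only genuine obstacle is the two-stage double-counting to simultaneously bound, for almost every vertex, both the number of irregular cluster-pairs it meets and the number of regular pairs in which it is degree-exceptional; everything else is bookkeeping.
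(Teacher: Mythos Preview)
The paper does not give its own proof of this lemma: it is quoted verbatim as the degree form of the regularity lemma and attributed to the Koml\'os--Simonovits survey \cite[Theorem~1.10]{komlos1996szemeredi}, then used as a black box. So there is no ``paper's proof'' to compare against.

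That said, your outline is the standard derivation of the degree form from the classical regularity lemma and is essentially correct. Two small bookkeeping points, neither of which damages the argument: first, once you enlarge $W_0$ to $V_0$ (by adding whole bad clusters, multi-exceptional vertices, and trimmings), you must also delete all edges from surviving vertices into the newly added part of $V_0$, contributing a further $O(\sqrt{\varepsilon'})\,v(G)$ term that is missing from your displayed sum; second, your $(d+\varepsilon')v(G)$ term already ranges over all pairs, so the extra $\sqrt{\varepsilon'}\,v(G)$ for the $j$-exceptional low-density pairs is a harmless overcount rather than a separate contribution. Both are absorbed by your hierarchy $\varepsilon'\ll\varepsilon^2$, so the conclusion is unaffected.
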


The partition in \Cref{lem:regularity} is called an \emph{$(\ve, d)$-regular partition} or simply an \emph{$\ve$-regular partition}. We now define an \emph{$(\ve, d)$-reduced graph} $R$ on the vertex set $\{V_1, \dots , V_t\}$, which is obtained from \Cref{lem:regularity}, by joining the edges for each $V_i, V_j$ if and only if $(V_i, V_j)$ is an $(\ve, d+)$-regular pair. The following lemma shows that the reduced graph inherits the minimum degree condition~\cite{kuhn2005large}.

\begin{lemma}[\cite{kuhn2005large}, Proposition 9]\label{lem:reduced-minimum-degree}
    For every $\gamma > 0$, there exist $\ve_0 = \ve_0(\gamma)$ and $d_0 = d_0(\gamma) > 0$ such that for every $\ve \leq \ve_0$, $d \leq d_0$, and $\delta > 0$, for every graph $G$ with minimum degree $(\delta + \gamma)n$, the reduced graph $R$ on $G$ obtained by applying \Cref{lem:regularity} with parameters $\ve$ and $d$, the minimum degree of $R$ is at least $\left(\delta + \frac{\gamma}{2}\right)|R|$.
\end{lemma}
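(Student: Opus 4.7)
The plan is to do a direct counting argument: fix a cluster, look at a vertex inside it, tally its degree in the cleaned-up graph $G_0$, and show that almost all of that degree must be carried by $(\ve,d+)$-regular partners of the cluster. Concretely, I would first choose the parameters so that $\ve_0 \ll \gamma$ and $d_0 \ll \gamma$; specifically it will suffice to take $\ve_0 \le \gamma/8$ and $d_0 \le \gamma/4$, since the only thing the proof needs is that $d + 2\ve \le \gamma/2$.

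Now fix any cluster $V_i$ and pick an arbitrary vertex $v \in V_i$. By assumption $d_G(v) \ge (\delta + \gamma)n$, and the fifth bullet of \Cref{lem:regularity} gives
\begin{equation*}
    d_{G_0}(v) \;\ge\; (\delta + \gamma)n - (d + \ve)n \;=\; (\delta + \gamma - d - \ve)n.
\end{equation*}
Next I use the structural bullets of \Cref{lem:regularity}: since $G_0[V_i]$ has no edge, and since the bipartite graphs $G_0[V_i, V_j]$ are empty whenever $(V_i, V_j)$ is not $(\ve, d+)$-regular, every neighbor of $v$ in $G_0$ lies either in the exceptional set $V_0$ or in a cluster $V_j$ for which $(V_i, V_j)$ is $(\ve, d+)$-regular, i.e.\ for which $V_iV_j$ is an edge of $R$.

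Let $k \defeq d_R(V_i)$ be the number of such clusters. Using $|V_0| \le \ve n$ and $|V_j| \le n/t$ for every $j \ge 1$, the previous paragraph yields
\begin{equation*}
    (\delta + \gamma - d - \ve)n \;\le\; d_{G_0}(v) \;\le\; |V_0| + k \cdot \frac{n}{t} \;\le\; \ve n + \frac{kn}{t},
\end{equation*}
so $k \ge (\delta + \gamma - d - 2\ve)\,t$. By the choice of $\ve_0$ and $d_0$, this gives $k \ge (\delta + \gamma/2)\,t = (\delta + \gamma/2)|R|$, as required. The argument does not depend on the choice of $v \in V_i$ or of $i$, so every vertex of $R$ has the desired degree.

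There is no genuine obstacle here; the whole proof is a one-vertex counting step, and the only thing to be careful about is bookkeeping the two wasteful inequalities (losing $(d+\ve)n$ when passing from $G$ to $G_0$, and losing $|V_0| \le \ve n$ because edges into $V_0$ do not contribute to $R$) and absorbing both losses into a single $\gamma/2$ slack by the initial choice of $\ve_0$ and $d_0$.
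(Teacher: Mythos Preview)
The paper does not prove this lemma; it is quoted as Proposition~9 of \cite{kuhn2005large} and used as a black box. Your argument is the standard one-vertex counting proof for this fact and is correct as written: the two losses $(d+\ve)n$ from passing to $G_0$ and $|V_0|\le \ve n$ from edges into the exceptional set are exactly the right things to track, and your choice $\ve_0\le\gamma/8$, $d_0\le\gamma/4$ absorbs them into the $\gamma/2$ slack.
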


Reduced graphs are useful for analyzing the approximate structures of the host graphs, as they inherit many properties of the host graphs, such as the minimum degree.

The main reason that we need the regularity lemma is to get an efficient absorber for the leftover vertices at the final step of the proofs. To achieve this, we need an additional minimum degree condition for the $\ve$-regular pair. The following lemma guarantees that we can delete low-degree vertices from $A$ and $B$ while preserving the $\ve$-regularity.

\begin{lemma}[\cite{bottcher2009proof}, Proposition 12]\label{lem:super-regular-delete}
    Let $(A, B)$ be an $(\ve, d)$-regular pair in a graph $G.$ Let $B' \subseteq B$ with $|B'| \geq \ve |B|.$ Then the size of the set $\{a \in A : d_G(a; B') < (d - \ve)|B'|\}$ is at most $\ve |A|$. 
\end{lemma}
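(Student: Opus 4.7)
The plan is a short contradiction argument directly from the definition of an $(\ve,d)$-regular pair. Let
\[
A^* \defeq \{a\in A : d_G(a; B') < (d-\ve)|B'|\},
\]
and suppose toward a contradiction that $|A^*| > \ve |A|$. Since we also assume $|B'|\geq \ve|B|$, the pair $(A^*, B')$ satisfies the size requirement appearing in the definition of $(\ve,d)$-regularity for $(A,B)$, and therefore
\[
|d_G(A^*, B') - d| \leq \ve, \qquad \text{so in particular } d_G(A^*, B') \geq d - \ve.
\]

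On the other hand, summing the defining degree bound over $a\in A^*$ gives
\[
e(G[A^*, B']) \;=\; \sum_{a\in A^*} d_G(a; B') \;<\; |A^*|\,(d-\ve)\,|B'|,
\]
and dividing by $|A^*|\,|B'|$ yields $d_G(A^*, B') < d - \ve$, contradicting the inequality obtained from regularity. Hence $|A^*|\leq \ve|A|$, which is exactly the claim.

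There is no real obstacle here: the lemma is a direct unpacking of the definition, and the only thing to watch is that the strict inequality in the definition of $A^*$ passes to a strict inequality on the average density $d_G(A^*, B')$, which in turn is incompatible with the regularity bound $d_G(A^*, B')\geq d-\ve$. The hypothesis $|B'|\geq \ve|B|$ is used precisely to make $(A^*, B')$ an admissible test pair in the $\ve$-regularity condition.
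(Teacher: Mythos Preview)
Your argument is correct and is the standard proof of this fact. Note that the paper does not actually prove this lemma; it simply cites it from \cite{bottcher2009proof}, so there is no ``paper's own proof'' to compare against, but your contradiction argument via the definition of $(\ve,d)$-regularity is exactly the expected one.
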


To get more information on the regularity lemma, see the following papers~\cite{kim2019blow, komlos1997blow, komlos2002regularity, komlos1996szemeredi, rodl2010regularity}.

%---------------------------------------------------------------
\section{Extremal examples and properties of $\xi(H)$ and $\chi_{cr}(H^*)$}\label{sec:ext-prop}

We first show that for any $\gamma > 0$ and every graph $H$, there is $n_0 = n_0(\gamma, H)$ such that for all $n \geq n_0$, the inequality $\delta_{\mathrm{sub}}(n, H) \leq \left(\frac{1}{2} + \gamma \right)n$ holds. Note that for any graph $H$, we have $\left(1 - \frac{1}{\chi^*(H^*)} \right) \leq \frac{1}{2}$ since $H^*$ is bipartite. Then \Cref{thm:kuhn-osthus} implies $\delta(n, H^*) \leq \left(\frac{1}{2} + \frac{\gamma}{2} \right)n$ if $n$ is sufficiently large and divisible by $v(H^*)$.

\begin{proposition}\label{prop:not-exceed-1/2}
    Let $0 < \frac{1}{n} \ll \gamma , \frac{1}{h} \leq 1.$ Then for every $h$-vertex graph $H$, the following holds:
    $$
    \delta_{\mathrm{sub}}(n, H) \leq \left(\frac{1}{2} + \gamma \right)n.
    $$
\end{proposition}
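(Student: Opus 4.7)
The plan is to reduce \Cref{prop:not-exceed-1/2} to \Cref{thm:kuhn-osthus} applied to the bipartite subdivision $H^*$ of $H$. Since $H^*$ is bipartite we have $\chi^*(H^*) \leq 2$, so \Cref{thm:kuhn-osthus} yields $\delta(m, H^*) \leq \left(\tfrac{1}{2} + \tfrac{\gamma}{2}\right) m$ for all sufficiently large $m$ divisible by $v(H^*)$. If $v(H^*)$ happened to divide $n$, we would be done immediately, since any perfect $H^*$-tiling of $G$ is in particular a perfect $H$-subdivision tiling of $G$. The only real issue is therefore the divisibility of $n$ by $v(H^*)$.

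To handle this, I would carve off a single auxiliary subdivision $F$ of $H$ whose order corrects the residue of $n$ modulo $v(H^*)$. Concretely, fix an edge $e$ of $H^*$ and subdivide $e$ further into a path of an appropriate length $\ell \leq v(H^*)$; the resulting graph $F$ lies in $Sub(H)$, has $v(F)$ bounded solely in terms of $H$, and satisfies $n - v(F) \equiv 0 \pmod{v(H^*)}$. Since $F$ is obtained from a bipartite graph by further edge-subdivisions, one checks $\chi(F) \leq 3$, so by the Erd\H{o}s--Stone--Simonovits theorem $\pi(F) \leq \tfrac{1}{2}$. The minimum degree of $G$ is well above this Tur\'an threshold, hence $G$ contains a copy of $F$.

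Finally, set $G' \defeq G - V(F)$. Because $v(F)$ depends only on $H$, for $n$ sufficiently large we have
\[
    \delta(G') \geq \delta(G) - v(F) \geq \left(\tfrac{1}{2} + \tfrac{\gamma}{2}\right)(n - v(F)),
\]
and $n - v(F)$ is divisible by $v(H^*)$ by construction. Applying \Cref{thm:kuhn-osthus} to $G'$ produces a perfect $H^*$-tiling, and combining it with the fixed copy of $F$ yields the desired perfect $H$-subdivision tiling of $G$. The only subtle step is the divisibility adjustment via $F$, but it is resolved cheaply because the slack $\gamma > 0$ comfortably absorbs both the minimum-degree loss from deleting $V(F)$ and the cost of locating $F$ via supersaturation; no absorption machinery or regularity argument is needed at this stage, and these heavier tools are reserved for the sharper thresholds in \Cref{thm:main,thm:hcf-2}.
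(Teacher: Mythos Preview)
Your proposal is correct and follows essentially the same approach as the paper: apply \Cref{thm:kuhn-osthus} to the bipartite subdivision $H^*$, and handle the divisibility of $n$ by $v(H^*)$ by first carving off a single copy of a further subdivision $F$ of $H^*$ (with $\chi(F)\leq 3$, found via Erd\H{o}s--Stone--Simonovits) whose size corrects the residue. The paper's argument and yours are identical in structure and in all key steps.
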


\begin{proof}
    Let $G$ be an $n$-vertex graph and $\delta(G) \geq \left(\frac{1}{2} + \gamma \right)n.$ By subdividing one edge of $H^*$ at most $v(H^*)$ more times, we obtain a graph $F$ with a constant number of vertices, $\chi(F) \leq 3$, and $v(G) - v(F)$ is divisible by $v(H^*)$.
    By the Erdős-Stone-Simonovits theorem, we can find a copy of $F$ in $G.$
    Since $v(F)$ is bounded, the minimum degree of $G[V(G)\setminus V(F)]$ is greater than $\left(\frac{1}{2} + \frac{\gamma}{2}\right)n$ for $n \geq n_0$, where $n_0$ depends only on $H.$ 
    Then by \Cref{thm:kuhn-osthus}, there is a perfect $H^*$-tiling in $G[V(G)\setminus V(F)]$ which yields a perfect $H$-subdivision tiling in $G$ together with the copy of $F.$
    This completes the proof.
\end{proof}

We now prove lower bounds on $\delta_{\mathrm{sub}}(n, H)$ by constructing graphs without perfect $H$-subdivision tilings. The following observation shows that $H^*$ is a bipartite subdivision with the most unbalanced bipartition. We recall that $\xi(H)$ is the minimum value among all possible values of $f_H$ where $f_H: 2^{V(H)}\to \mathbb{R}$ is a function such that for every $X\subseteq V(G)$, $f_H(X) = \frac{v(H)+e(H[X])+e(H[Y])}{|X| + e(H[Y])}$, where $Y = V(G)\setminus X.$

\begin{observation}\label{obs:why_I_define_H*}
    Let $H$ be a graph and let $F\in \mathrm{Sub}(H)$ be a bipartite graph with bipartition $A$ and $B.$ Then $\frac{|B|}{|A|} \leq \frac{1}{\xi(H) - 1}.$
\end{observation}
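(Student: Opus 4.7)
The plan is to analyze directly how the vertices of a bipartite subdivision $F$ distribute across the bipartition $(A,B)$. I will let $X = V(H)\cap A$ and $Y = V(H)\cap B$ partition the branch vertices of $F$ according to the side they land on. For each edge $e=uv\in E(H)$, the replacing path $P_e$ in $F$ has some length $\ell_e$; bipartiteness forces $\ell_e$ to be even with $\ell_e\geq 2$ when $u,v$ lie on the same side (both in $X$ or both in $Y$), and odd with $\ell_e\geq 1$ when they lie on opposite sides. A straightforward alternation argument then counts exactly how many internal vertices of $P_e$ land in each part.

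The heart of the argument is a minimum-configuration reduction combined with a monotonicity observation. Let $F^*$ be the shortest admissible subdivision with the same partition $X,Y$ of branch vertices: use length $2$ for every same-side edge of $H$ and length $1$ for every crossing edge. A direct count yields
\[
|A^*| = |X| + e(H[Y]), \qquad |B^*| = |Y| + e(H[X]), \qquad v(F^*) = v(H) + e(H[X]) + e(H[Y]),
\]
so that $|A^*|/v(F^*) = 1/f_H(X)$ and $|B^*|/v(F^*) = 1/f_H(Y)$, giving in particular the conjugate identity $1/f_H(X) + 1/f_H(Y) = 1$. Any bipartite subdivision $F$ with the same branch-partition arises from $F^*$ by repeatedly lengthening paths by two edges at a time (necessary to preserve parity), and each such extension adds exactly one vertex to $A$ and one to $B$. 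Hence $|A| = |A^*|+\Delta$ and $|B| = |B^*|+\Delta$ for some integer $\Delta\geq 0$, which implies $|B|/|A| \leq \max\{|B^*|/|A^*|,\, 1\}$.

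It therefore suffices to bound $|B^*|/|A^*|$. From the conjugate identity one has $f_H(Y) = f_H(X)/(f_H(X)-1)$, and since both $f_H(X), f_H(Y) \geq \xi(H)$ by the definition of $\xi(H)$, in the case $|B^*| \geq |A^*|$, equivalently $f_H(X) \geq f_H(Y)$, we compute
\[
\frac{|B^*|}{|A^*|} \;=\; \frac{f_H(X)}{f_H(Y)} \;=\; \frac{1}{f_H(Y)-1} \;\leq\; \frac{1}{\xi(H)-1}.
\]
In the opposite case $|B^*| \leq |A^*|$, the ratio $|B|/|A|$ is at most $1$, which is itself bounded by $1/(\xi(H)-1)$ since $\xi(H)\leq 2$. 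Combined with the monotonicity step, this yields $|B|/|A| \leq 1/(\xi(H)-1)$, as desired.

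The main obstacle I anticipate is the bookkeeping in the first step (correctly classifying the three path types by parity and side and deriving the counts of internal vertices in each part), together with the subtle point that $X$ is forced by the given bipartition of $F$ and cannot be chosen freely to minimize $f_H$; the lower bound $f_H(X), f_H(Y) \geq \xi(H)$ is therefore used instead. Once the minimal-configuration formulas, the monotonicity under lengthening, and the conjugate identity are in hand, the remainder is immediate.
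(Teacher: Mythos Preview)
Your proof is correct and follows essentially the same route as the paper's. Both arguments partition the branch vertices as $X=V(H)\cap A$, $Y=V(H)\cap B$, recognize that the minimal admissible subdivision has part sizes $|X|+e(H[Y])$ and $|Y|+e(H[X])$ (the paper calls these $p$ and $q$), observe that any further subdivision adds the same amount to each side (your $\Delta$, the paper's $c$), and then bound via $f_H(Y)\ge \xi(H)$; your conjugate identity $1/f_H(X)+1/f_H(Y)=1$ and the paper's direct manipulation of $(p+q+2c)/(q+c)$ are just two packagings of the same computation.
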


\begin{proof}
    We may assume $|A| \leq |B|.$ It suffices to show that $\xi(H) \leq \frac{|A| + |B|}{|B|}.$
    Let $U$ be the branch vertices of $F$ and let $X = A \cap U$ and $Y = B \cap U.$ Let $p = |X| + e(H[Y])$ and $q = |Y| + e(H[X]).$ Then $f_H(Y) = \frac{p + q}{q}.$ 
    Since $F$ is a bipartite subdivision of $H$ with branch set $X \cup Y$, every edge in $H[X, Y]$ is subdivided an even number of times. Thus, there is a non-negative integer $c$ such that $\frac{|A| + |B|}{|B|} = \frac{p + q + 2c}{q + c}$.
    If $\frac{p + q}{q} \geq 2$, then $\frac{|A| + |B|}{|B|} \geq 2.$ Since $\xi(H) \leq 2$, in this case, the observation is proved. Otherwise, we may observe that $p < q.$ Then $\frac{|A| + |B|}{|B|} = \frac{p + q + 2c}{q + c} \geq \frac{p + q}{q}$ holds. By the definition of $\xi(H)$, we have $\xi(H) \leq H(Y) = \frac{p + q}{q}.$ This completes the proof.
\end{proof}

By using \Cref{obs:why_I_define_H*}, we can show the following proposition.

\begin{proposition}\label{prop:lowerbound-hcf}
    For every integer $n > 0$ and every graph $H$, there is an $n$-vertex graph $G$ with minimum degree at least $\left\lfloor\left(1 - \frac{1}{\xi(H)} \right)n\right\rfloor - 1$ such that $G$ does not have a perfect $H$-subdivision tiling.
\end{proposition}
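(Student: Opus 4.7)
The plan is to exhibit an explicit extremal graph in the form of an unbalanced complete bipartite graph and to rule out a perfect $H$-subdivision tiling by invoking \Cref{obs:why_I_define_H*}, which constrains how balanced the bipartition of any bipartite subdivision of $H$ must be.

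Concretely, I would set $m \defeq \max\{\lfloor(1 - \tfrac{1}{\xi(H)})n\rfloor - 1,\, 0\}$ and take $G \defeq K_{n-m,\, m}$. Since $\xi(H) \leq 2$, we have $m \leq n/2$, so $\delta(G) = m$, which meets the required minimum-degree bound. (When $\lfloor(1 - \tfrac{1}{\xi(H)})n\rfloor - 1 \leq 0$, $G$ is edgeless; since $H$ has at least one edge, no subdivision of $H$ embeds in $G$ and the statement is immediate.) Then I would argue by contradiction: assume $F_1, \ldots, F_t$ is a perfect $H$-subdivision tiling of $G$. Each $F_i$, being a subgraph of the bipartite graph $G$, is a bipartite subdivision of $H$; the bipartition of $G$ restricted to $V(F_i)$ induces a bipartition $(A_i, B_i)$ of $F_i$ with $A_i$ contained in the small side of $G$. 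Applying \Cref{obs:why_I_define_H*} to $F_i$ gives $\tfrac{\max\{|A_i|,|B_i|\}}{\min\{|A_i|,|B_i|\}} \leq \tfrac{1}{\xi(H)-1}$, which rearranges to $\min\{|A_i|,|B_i|\} \geq (1 - \tfrac{1}{\xi(H)})\,v(F_i)$. Hence
\[
    m \;=\; \sum_{i=1}^{t} |A_i| \;\geq\; \sum_{i=1}^{t}\min\{|A_i|,|B_i|\} \;\geq\; \left(1 - \tfrac{1}{\xi(H)}\right)\sum_{i=1}^{t} v(F_i) \;=\; \left(1 - \tfrac{1}{\xi(H)}\right)n,
\]
contradicting $m < (1 - \tfrac{1}{\xi(H)})n$.

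I do not anticipate any substantial obstacle: the construction is standard, and the only nontrivial input is \Cref{obs:why_I_define_H*}, which is already established. The single mildly delicate step is the short algebraic rearrangement converting the ratio bound into a lower bound on the smaller-side fraction, which is a one-line calculation.
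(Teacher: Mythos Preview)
Your proposal is correct and follows essentially the same approach as the paper: both construct the complete bipartite graph with small side of size $\lfloor(1-\tfrac{1}{\xi(H)})n\rfloor - 1$ and invoke \Cref{obs:why_I_define_H*} to rule out a perfect $H$-subdivision tiling. Your version is in fact slightly more careful---you handle the degenerate case $m=0$ explicitly and spell out the summation over the tiles $F_i$ that the paper leaves implicit.
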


\begin{proof}
    Let $G$ be an $n$-vertex complete bipartite graph with bipartition $X$ and $Y$ such that $|X| \leq |Y|$. Note that as $G$ is a bipartite graph, every subdivision of $H$ in $G$ is bipartite. By \Cref{obs:why_I_define_H*}, if $\frac{|Y|}{|X|} > \frac{1}{\xi(H) - 1}$, then $G$ does not have a perfect $H$-subdivision tiling. Let $|X| = \left\lfloor \left(1 - \frac{1}{\xi(H)} \right)n \right\rfloor - 1$. Then $\delta(G) = \left\lfloor \left(1 - \frac{1}{\xi(H)} \right)n \right\rfloor - 1$ and $\frac{|Y|}{|X|} > \frac{1}{\xi(H) - 1}$. Thus, $G$ does not have a perfect $H$-subdivision tiling.
\end{proof}

The next proposition provides a reason for why $\hcf_{\xi}(H)$ is the determining factor of the value $\xi^*(H)$.

\begin{proposition}\label{prop:hcf-not-2}
    Let $H$ be a graph with $\hcf_{\xi}(H) \neq 1$. Then for every integer $n > 0$, there is an $n$-vertex graph $G$ with minimum degree at least $\left\lfloor \frac{n}{2} \right\rfloor - 1$ which does not have a perfect $H$-subdivision tiling except for $\hcf_{\xi}(H) = 2$ and $n$ is even.
\end{proposition}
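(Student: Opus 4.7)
The plan is to take $G$ to be a slightly unbalanced complete bipartite graph, chosen so that the size difference between its two parts is not an integer multiple of $hcf_{\xi}(H)$. Concretely, when $n$ is odd, let $G = K_{(n-1)/2,\, (n+1)/2}$; when $n$ is even (so by hypothesis $hcf_{\xi}(H) \geq 3$ or $hcf_{\xi}(H) = \infty$), let $G = K_{n/2-1,\, n/2+1}$. In both cases $\delta(G) = \lfloor n/2 \rfloor$ or $\lfloor n/2 \rfloor - 1$, so the minimum-degree requirement is met.

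The key technical step is the following observation: for every bipartite graph $H' \in Sub(H)$ with bipartition $(A', B')$, the signed difference $|A'| - |B'|$ lies in $\calC(H)$. To see this, set $X \defeq V(H) \cap A'$ and $Y \defeq V(H) \cap B'$, so the branch vertices of $H'$ are partitioned as $V(H) = X \sqcup Y$. For each edge $e \in E(H)$, look at the corresponding path in $H'$ and count its internal vertices on each side:
\begin{itemize}
    \item If $e \in E(H[X])$, both endpoints lie in $A'$; bipartiteness forces an even path length $2k \geq 2$, giving $k-1$ internal vertices in $A'$ and $k$ in $B'$, hence net contribution $-1$ to $|A'|-|B'|$.
    \item If $e \in E(H[Y])$, symmetrically the net contribution is $+1$.
    \item If $e \in E(H[X,Y])$, the path has odd length and splits its internal vertices evenly, contributing $0$.
\end{itemize}
Summing across all edges gives $|A'| - |B'| = \bigl(|X| + e(H[Y])\bigr) - \bigl(|Y| + e(H[X])\bigr) \in \calC(H)$, as claimed.

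Now suppose for contradiction that $G$ admits a perfect $H$-subdivision tiling $\{H'_1, \ldots, H'_t\}$. Since $G$ is bipartite, each $H'_i$ is a bipartite subdivision of $H$ with bipartition inherited from the parts $A$, $B$ of $G$. By the observation above, each $|A \cap V(H'_i)| - |B \cap V(H'_i)| \in \calC(H)$, and summing over $i$ yields $|A| - |B| \in hcf_{\xi}(H)\cdot \ZZ$. However, our construction gives $|A|-|B| = -1$ (when $n$ is odd) or $|A|-|B| = -2$ (when $n$ is even and $hcf_{\xi}(H) \neq 2$), and in either case $hcf_{\xi}(H)$ does not divide $|A|-|B|$. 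This contradiction completes the proof.

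The only non-routine step is the parity bookkeeping in the key observation, but it is straightforward once one notes that bipartiteness forces edges of $H[X]$ and $H[Y]$ to be subdivided an odd number of times while edges of $H[X,Y]$ are subdivided an even number of times; the rest is an algebraic consequence of the definitions of $\calC(H)$ and $hcf_{\xi}(H)$, together with the elementary arithmetic that neither $1$ nor $2$ is a nonzero multiple of any integer $\geq 3$.
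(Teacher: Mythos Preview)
Your proof is correct and follows essentially the same approach as the paper: the same two complete bipartite graphs are used for the odd and even cases, and the contradiction comes from the same divisibility obstruction. Your version spells out in full the parity bookkeeping showing $|A'|-|B'|\in\calC(H)$ for any bipartite subdivision $H'$, whereas the paper simply asserts that the difference is divisible by $hcf_{\xi}(H)$ ``by the definition of $hcf_{\xi}(H)$''; otherwise the arguments are identical.
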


To prove \Cref{prop:hcf-not-2}, we need the following claim.
\begin{claim}\label{clm:hcf-bipartite}
    Let $H$ be a graph with at least one edge and let $H'$ be a bipartite subdivision of $H$ with bipartition $A$ and $B$. Then $|B| - |A|$ is divisible by $\hcf_{\xi}(H)$.
\end{claim}

\begin{claimproof}[Proof of \Cref{clm:hcf-bipartite}]
    Let $A'$ and $B'$ be the branch vertices of $H$ contained in $A$ and $B$, respectively. Let $uv\in E(H)$ be an edge and let $u z_1, \dots, z_k, v$ be the subdivided path in $H'$ that corresponds to $uv$. If $u\in A$ and $v\in B$, then $k$ is an even number since $H'$ is a bipartite graph. Thus, we observe that $|B\cap \{z_1,\dots, z_k\}| - |A\cap \{z_1, \dots, z_k\}| = 0$. If both $u$ and $v$ are in $A$, the number $k$ is odd and the equality $|B\cap \{z_1,\dots, z_k\}| - |A\cap \{z_1, \dots, z_k\}| = 1$ holds since $H'$ is a bipartite graph. Similarly, if both $u$ and $v$ are in $B$, then we have $|B\cap \{z_1,\dots, z_k\}| - |A\cap \{z_1, \dots, z_k\}| = -1$. Hence, we have the following. 
    $$|B| - |A| = e(H[A']) - e(H[B']) + |B'| - |A'| = (|B'| + e(H[A'])) - (|A'| + e(H[B'])).$$ 
    Thus, by the definition of $\hcf_{\xi}(H)$, the number $|B| - |A|$ is divisible by $\hcf_{\xi}(H)$. This completes the proof.
\end{claimproof}

\begin{proof}[Proof of \Cref{prop:hcf-not-2}]
    Let $F\in \mathrm{Sub}(H)$ be a bipartite graph with bipartition $A$ and $B$. Since $F$ is a bipartite subdivision of $H$, by \Cref{clm:hcf-bipartite}, the difference $|B| - |A|$ is divisible by $\hcf_{\xi}(H)$. If $G$ is a bipartite graph, and the difference between the sizes of two bipartitions of $G$ is not divisible by $\hcf_{\xi}(H)$, the graph $G$ does not have a perfect $H$-subdivision tiling since every subdivision of $H$ in $G$ is bipartite.
    
    First, let $n$ be an even number and $\hcf_{\xi}(H) > 2$. Let $G = K_{\frac{n}{2} - 1, \frac{n}{2} + 1}$. Then, the difference between the two bipartitions of $G$ is two which cannot be divisible by $\hcf_{\xi}(H)$ since $\hcf_{\xi}(H) > 2$.

    Second, let $n$ be an odd number and $\hcf_{\xi}(H) \neq 1$. Let $G = K_{\left\lfloor \frac{n}{2} \right\rfloor, \left\lceil \frac{n}{2} \right\rceil}$. Then, the difference between the sizes of the two bipartitions of $G$ is $1$, which is not divisible by $\hcf_{\xi}(H)$ as $\hcf_{\xi}(H) \neq 1$.
\end{proof}

In the following proposition, we consider the case when $\hcf_{\xi}(H) = 2$ and the order of the host graph is even.

\begin{proposition}\label{prop:hcf-2-1/3}
    For every graph $H$ with $\hcf_{\xi}(H) = 2$ and for every even number $n$, there is an $n$-vertex graph $G$ with minimum degree at least $\left\lfloor \frac{1}{3}n \right\rfloor - 1$ such that $G$ does not contain a perfect $H$-subdivision tiling.    
\end{proposition}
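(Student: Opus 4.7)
My plan is to take $G$ to be the complete bipartite graph $K_{a,b}$ with $a = \lfloor n/3 \rfloor - 1$ and $b = n - a = n - \lfloor n/3 \rfloor + 1$. The minimum-degree condition $\delta(G) = a \geq \lfloor n/3 \rfloor - 1$ is immediate.

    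To show that $G$ admits no perfect $H$-subdivision tiling, I would use that $G$ is bipartite, so any $H$-subdivision $F \subseteq G$ is itself bipartite, with its parts $A_F, B_F$ lying on opposite sides of $G$; in any hypothetical perfect tiling $\{F_i\}_{i=1}^m$ one has $\sum_i |A_{F_i}| = a$ and $\sum_i |B_{F_i}| = b$. If $\xi(H) \geq 3/2$, I would apply \Cref{obs:why_I_define_H*} to each $F_i$ to obtain $|B_{F_i}| \leq |A_{F_i}|/(\xi(H)-1) \leq 2|A_{F_i}|$; summing gives $b \leq 2a$, contradicting the easily verified inequality $n-\lfloor n/3\rfloor + 1 > 2(\lfloor n/3\rfloor - 1)$. (This subcase is already covered by \Cref{prop:lowerbound-hcf}, but the same construction applies.)

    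For the complementary subcase $\xi(H) < 3/2$, the ratio bound is too weak since $1/(\xi(H)-1) > 2$, so I would instead exploit the divisibility structure encoded by $hcf_\xi(H) = 2$. From the argument preceding the definition of $\calC(H)$, each difference $|B_{F_i}| - |A_{F_i}|$ lies in the explicit set $\calC(H) \subseteq 2\ZZ$, and every bipartite $H$-subdivision has at least some constant number $v_{\min}(H)$ of vertices depending only on $H$; hence the number $m$ of pieces satisfies $m \leq n/v_{\min}(H)$, and the prescribed difference $b - a = n - 2\lfloor n/3\rfloor + 2$ must be realizable as a sum of at most $m$ elements of $\calC(H)$. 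For most $n$ this representability fails outright, giving the result. For those residual $n$ where $K_{a,b}$ itself does tile, I would replace $G$ by a small perturbation such as $K_{a+1,b-1}$, which still satisfies $\delta(G) \geq \lfloor n/3\rfloor - 1$ but shifts $b-a$ by $-2$, moving it into a residue class modulo an appropriate period of $\calC(H)$ that is incompatible with any sum of $m \leq n/v_{\min}(H)$ elements of $\calC(H)$.

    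The main obstacle I foresee is the case analysis in the subcase $\xi(H) < 3/2$: the combinatorics of the finite set $\calC(H)$ together with the minimum bipartite-subdivision size $v_{\min}(H)$ must be tracked carefully in each residue class of $n$ modulo a period depending on $H$, and one has to verify for every even $n$ that some such bipartite construction (possibly a small perturbation of the default one) both meets the minimum-degree bound and remains non-tileable.
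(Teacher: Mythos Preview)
Your approach has a genuine gap in the subcase $\xi(H) < 3/2$, and the perturbation idea will not rescue it. The construction $G = K_{a,b}$ with $a+b = n$ even forces $b-a$ to be even, so the hypothesis $hcf_\xi(H)=2$ gives no parity obstruction whatsoever. Your fallback---that $b-a$ might fail to be representable as a sum of at most $n/v_{\min}(H)$ elements of $\calC(H)$---does not hold in general: since $\calC(H)$ is closed under negation and has greatest common divisor $2$, for large $n$ every even target in the relevant range is realizable, and one can typically tile $K_{a,b}$ with bipartite subdivisions of $H$ once $b/a < 1/(\xi(H)-1)$ and $b-a$ is even. Shifting to $K_{a+1,b-1}$ changes $b-a$ by $-2$, which is still even, so the same situation persists. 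There is no residue-class argument available here because the only modular constraint coming from $hcf_\xi(H)=2$ is parity, and your construction already satisfies it.

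The paper's construction sidesteps this entirely by taking $G$ to be \emph{disconnected}: a complete bipartite graph on parts of sizes $\lfloor n/3\rfloor - 1$ and $\lfloor n/3\rfloor$ together with a disjoint clique on the remaining vertices. The bipartite component then has part-size difference equal to $1$, which is odd, so $hcf_\xi(H)=2$ immediately forbids a perfect $H$-subdivision tiling of that component, and hence of $G$. The clique component is present only to soak up the leftover vertices while keeping $\delta(G) \geq \lfloor n/3\rfloor - 1$. The key idea you are missing is that one may decouple the parity obstruction from the global vertex count by placing it inside a single component.
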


\begin{proof}
    Let $(A, B, C)$ be a partition of $[n]$ with sizes $|A| = \left\lfloor \frac{1}{3}n \right\rfloor - 1$ and $|B| = \left\lfloor \frac{1}{3}n \right\rfloor$. Let $G$ be a graph on vertex set $[n]$ such that $G$ is a disjoint union of a clique on $C$ and a complete bipartite graph with bipartition $(A, B)$. Then, the minimum degree of $G$ is $\left\lfloor \frac{1}{3}n \right\rfloor - 1$. Let $G_1 = G[A \cup B]$ and $G_2 = G[C]$. Since $G_1$ and $G_2$ are disjoint, if $G$ contains a perfect $H$-subdivision tiling, so does $G_1$. However, the difference between the sizes of the bipartitions of $G_1$ is $1$, which is not divisible by $\hcf_{\xi}(H) = 2$. Thus, by \Cref{clm:hcf-bipartite}, the graph $G_1$ does not contain a perfect $H$-subdivision tiling since $G_1$ is a bipartite graph, so every subdivision of $H$ in $G$ is also bipartite. Therefore, $G$ does not have a perfect $H$-subdivision tiling.
\end{proof}

The next observation shows the relationship between $\xi(H)$ and $\chi_{cr}(H^*)$.

\begin{observation}\label{obs:xi-chicr}
    For every graph $H$, the inequality $\left(1 - \frac{1}{\xi(H)} \right) \geq \left(1 - \frac{1}{\chi_{cr}(H^*)} \right)$ holds.
\end{observation}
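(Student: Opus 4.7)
The plan is to prove the equivalent inequality $\xi(H) \geq \chi_{cr}(H^*)$ by exhibiting one concrete proper $2$-coloring of $H^*$ whose smaller color class is small enough, and then just reading off the formula for $\chi_{cr}$.

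First I would unpack the structure of $H^*$ coming from its construction. Set $Y \defeq V(H)\setminus X_H$. Since $H^*$ is obtained from $H$ by subdividing every edge in $H[X_H]$ and every edge in $H[Y]$ exactly once (and leaving $H[X_H,Y]$ alone), $H^*$ has a natural proper $2$-coloring $(P_1,P_2)$ given by
\[
    P_1 \defeq X_H \cup \{\text{subdivision vertices on edges of } H[Y]\}, \qquad
    P_2 \defeq Y \cup \{\text{subdivision vertices on edges of } H[X_H]\}.
\]
Then $|P_1| = |X_H| + e(H[Y])$, $|P_2| = |Y| + e(H[X_H])$, and $v(H^*) = |P_1|+|P_2| = v(H) + e(H[X_H]) + e(H[Y])$.

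Next I would use the defining minimality of $X_H$ to decide which part is smaller. Since $X_H$ minimizes $f_H$, we have $f_H(X_H) \leq f_H(Y)$, and plugging in the two denominators shows this is equivalent to $|X_H| + e(H[Y]) \geq |Y| + e(H[X_H])$, i.e., $|P_1| \geq |P_2|$. Because $H$ has at least one edge, $H^*$ has at least one edge and hence $\chi(H^*) = 2$, so $\sigma(H^*)$ is the minimum, over all proper $2$-colorings of $H^*$, of the size of the smaller color class. The coloring $(P_1,P_2)$ is one such coloring, so
\[
    \sigma(H^*) \leq \min(|P_1|,|P_2|) = |P_2| = |Y| + e(H[X_H]).
\]

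Finally I would substitute into the definition of $\chi_{cr}$. Using $\chi(H^*)=2$,
\[
    \chi_{cr}(H^*) \;=\; \frac{v(H^*)}{v(H^*) - \sigma(H^*)} \;\leq\; \frac{v(H^*)}{v(H^*) - (|Y| + e(H[X_H]))} \;=\; \frac{v(H^*)}{|X_H| + e(H[Y])} \;=\; f_H(X_H) \;=\; \xi(H),
\]
and then rewriting $\chi_{cr}(H^*)\leq \xi(H)$ as $1 - 1/\xi(H) \geq 1 - 1/\chi_{cr}(H^*)$ gives the claim. There is no real obstacle: the whole argument rests on the observation that the canonical bipartition of $H^*$ already witnesses an upper bound on $\sigma(H^*)$, and that the definition of $X_H$ as a minimizer of $f_H$ is exactly what forces $|P_2|$ to be the smaller side. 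The only small point to be careful about is excluding the degenerate case $\chi(H^*)<2$, which is ruled out by the standing assumption $e(H)\geq 1$.
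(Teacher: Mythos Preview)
Your proof is correct and follows essentially the same route as the paper: both construct the natural bipartition $(P_1,P_2)$ of $H^*$ coming from $X_H$ and $Y = V(H)\setminus X_H$, bound $\sigma(H^*)$ above by $|Y| + e(H[X_H])$, and substitute into the formula for $\chi_{cr}$. Your explicit verification that $|P_1|\geq |P_2|$ via $f_H(X_H)\leq f_H(Y)$ is a nice clarification but not strictly needed, since $\sigma(H^*)$ is bounded above by the size of \emph{any} color class in an optimal coloring.
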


\begin{proof}
    It suffices to show that $\chi_{cr}(H^*) \leq \xi(H)$. We recall that $X_H$ is a subset of $V(H)$ such that $f_H(X_H) = \xi(H)$. Let $Y_H \defeq V(H)\setminus X_H$. Let $A = X_H$ and $B = Y_H$. Since $H^*$ is obtained by replacing all edges in $H[X_H]$ and $H[Y_H]$ with vertex-disjoint paths of length two, we update the set $B$ by adding vertices that were used to replace the edges in $H[X_H]$. Similarly, we update the set $A$ by adding vertices that were used to replace the edges in $H[Y_H]$. Then $A$ and $B$ are independent sets and $|B| = |Y_H| + e(H[X_H])$. Thus, by the definition of $\sigma(H^*)$, the inequality $\sigma(H^*) \leq |Y_H| + e(H[X_H])$ holds. This implies $\chi_{cr}(H^*) \leq \frac{v(H^*)}{v(H^*) - (|Y_H| + e(H[X_H]))} = \frac{v(H)+ e(H[X_H]) + e(H[Y_H])}{|X_H| + e(H[Y_H])} = \xi(H)$. 
\end{proof}

%---------------------------------------------------------------
\section{$\mathrm{Sub}(H)$-absorbers}\label{sec:absorber}
We use the absorption method to prove \Cref{thm:main,thm:hcf-2}. In order to execute the absorption method, we need some specific structures that we call $\mathrm{Sub}(H)$-absorbers.

\begin{definition}
    Let $H$ and $G$ be graphs and take two subsets $A\subseteq V(G)$ and $X\subseteq V(G)\setminus A.$ We say $A$ is a \emph{$\mathrm{Sub}(H)$-absorber} for $X$ if both $G[A]$ and $G[A\cup X]$ have perfect $H$-subdivision tilings. If $X = \{v\}$, we say $A$ is a $\mathrm{Sub}(H)$-absorber for $v.$   
\end{definition}

Our strategy for constructing $\mathrm{Sub}(H)$-absorbers is to collect many vertex-disjoint small subgraphs with good properties, which we call \emph{absorber units}. We define three different types of absorber units, which we call type-$1$, type-$2$, and type-$3$ $\mathrm{Sub}(H)$-absorbers, respectively. 
We first introduce the type-$1$ $\mathrm{Sub}(H)$-absorber as follows.

\begin{definition}\label{def:type1unit}
    Let $H$ be a graph and $H^1$ be the $1$-subdivision of $H.$ Since $H^1$ is bipartite, $H^1$ has a bipartition $(U, V)$ such that the set $U$ consists of all branch vertices. Choose an edge $xy\in E(H^1)$, with $x\in U$, and $y\in V.$ Now we replace the edge $xy$ with a path of length three, say $x w z y$ and denote by $T^1_H$ the obtained graph. Let $u$ and $v$ be two new vertices and add edges $uy, uw, vx,$ and $vz$ to $T^1_H$ and we denote by $\hat{T}^1_H$ the obtained graph.

    Let $G$ be a graph and let two distinct vertices $a$ and $b\in V(G).$ We say a set $A\subseteq V(G)\setminus \{a, b\}$ is a \emph{type-$1$ $\mathrm{Sub}(H)$-absorber} for $\{a, b\}$ if $|A| = v(T^1_H)$ and there is an embedding $\phi: V(\hat{T}^1_H)\to A\cup\{a, b\}$ such that $\phi(u) = a$ and $\phi(v) = b.$
\end{definition}

We assume that the choice of $xy$ is always the same for all $H$, so that $T^1_H$ is uniquely determined for all $H.$
A graph $T^1_H$ is a subdivision of $H$ since it is obtained from the $1$-subdivision of $H$ by replacing one edge with a vertex-disjoint path of length $3.$ By the construction of $\hat{T}^1_H$, we can replace a path $xwzy$ with a path $xvzwuy$, so $\hat{T}^1_H$ also contains a spanning subdivision of $H.$ 
Thus, for a graph $G$ and $a$ and $b\in V(G)$, a set $X\subseteq V(G)$ is a type-$1$ $\mathrm{Sub}(H)$-absorber for $\{a, b\}$, implying both $G[X]$ and $G[X\cup\{a, b\}]$ have perfect $H$-subdivision tilings.

Type-$1$ $\mathrm{Sub}(H)$-absorbers are the most efficient bipartite absorber units. If the host graph is bipartite, then we only obtain bipartite absorber units. Note that a bipartite graph cannot absorb a single vertex, so absorbing two vertices is as good as it gets in a bipartite host graph.
Moreover, since $T^1_H$ is bipartite, supersaturation implies that there are many type-$1$ $\mathrm{Sub}(H)$-absorbers, allowing us to obtain many efficient $\mathrm{Sub}(H)$-absorbers.

If there are many type-$1$ $\mathrm{Sub}(H)$-absorbers for every pair of vertices in $G$, it would be ideal to obtain the desired absorbers. However, this is not true in general, so it is useful to define the following auxiliary graph that captures information on vertex pairs that have many type-$1$ $\mathrm{Sub}(H)$-absorbers. 

\begin{definition}
    Let $H$ be a graph and $d > 0$. For a given $n$-vertex graph $G$, we define the \emph{$d$-absorbing graph} $F$ as the graph on the vertex set $V(G)$ such that $uv\in E(F)$ if and only if the number of type-$1$ $\mathrm{Sub}(H)$-absorbers for $\{u, v\}$ is at least $dn^{v(T^1_H)}.$
\end{definition}

In order to deal with non-bipartite host graphs, we define the following two types of absorber units. Note that these absorbers are able to absorb one vertex.

\begin{definition}
    Let $H$ be a graph and $(U, V)$ be a bipartition of $H^1$ such that $U$ is the set of all branch vertices. Let $x, y$ be the vertices as in \Cref{def:type1unit}. For consistency, we denote by $T^2_H$ the copy of $H^1.$ Let $u$ be a new vertex and add edges $ux, uy$ to $T^2_H$ and we denote by $\hat{T}^2_H$ the obtained graph.

    Let $G$ be a graph and let $a\in V(G).$ We say a set $A\subseteq V(G)\setminus \{a\}$ is a \emph{type-$2$ $\mathrm{Sub}(H)$-absorber} for $\{a\}$ if $|A| = v(T^2_H)$ and there is an embedding $\phi: V(\hat{T}^2_H)\to A\cup\{a\}$ such that $\phi(u) = a.$
\end{definition}

Since $T^2_H$ is isomorphic to $H^1$, it has a perfect $H$-subdivision tiling. The graph $\hat{T}^2_H$ contains a spanning subgraph $F$ which is obtained from $T^2_H$ by replacing the edge $xy\in T^2_H$ with the path $xuy.$ Thus, $\hat{T}^2_H$ also has a perfect $H$-subdivision packing. The next definition is for type-$3$ $\mathrm{Sub}(H)$-absorbers.

\begin{definition}
    Let $H$ be a graph. Let two vertex-disjoint graphs $H^1$ and $T^1_H$ be given. Let $x, y, z, w$ be the vertices of $T^1_H$ as in \Cref{def:type1unit}. Choose an edge $x'y' \in E(H^1).$ We denote by $T^3_H$ a graph that is obtained from $H^1$ and $T^1_H$ by adding two edges $x'z$ and $y'w.$ Let $u$ be a new vertex and add edges $ux, uy$ to $T^3_H.$ We denote by $\hat{T}^3_H$ the obtained graph. We write $\tilde{T}^3_H$ to denote a bipartite graph that is $T^3_H - \{x, y\}.$

    Let $G$ be a graph and let $a\in V(G).$ We say a set $A\subseteq V(G)\setminus \{a\}$ is a \emph{type-$3$ $\mathrm{Sub}(H)$-absorber} for $\{a\}$ if $|A| = v(T^3_H)$ and there is an embedding $\phi: V(\hat{T}^3_H)\to A\cup\{a\}$ such that $\phi(u) = a.$
\end{definition}

Note that $T^3_H$ contains vertex-disjoint copies of $H^1$ and $T^1_H$ whose union covers all vertices of $T^3_H.$ We observe that $\hat{T}^3_H$ contains vertex-disjoint copies of graphs $F_1$ and $F_2$ such that $F_1$ is obtained from $H^1$ by replacing the edge $x'y'$ of $H^1$ with $x'wzy'$ and $F_2$ is obtained from $T^1_H$ by replacing $xwzy$ with $xuy.$ Both $F_1$ and $F_2$ are subdivisions of $H.$ Thus, both graphs $T^3_H$ and $\hat{T}^3_H$ have perfect $H$-subdivision tilings.

We note that $\hat{T}^2_H - u$ and $\hat{T}^3_H - u$ are both bipartite graphs. Moreover, $\hat{T}^2_H$ contains one triangle, and $\hat{T}^3_H$ contains one $C_5$, and deleting them leaves a bipartite graph. Hence, in graphs containing many $C_3$ or $C_5$, we can find many copies of $\hat{T}^2_H$ and $\hat{T}^3_H$, respectively. We will use these properties of $\hat{T}^2_H$ and $\hat{T}^3_H$ to prove a special case for \Cref{thm:hcf-2}.

Since all four graphs $T^2_H$, $\hat{T}^2_H$, $T^3_H$, and $\hat{T}^3_H$ have perfect $H$-subdivision tilings, type-$2$ and type-$3$ $\mathrm{Sub}(H)$-absorbers can be used to absorb a single vertex. Although this is an advantage over the type-$1$ $\mathrm{Sub}(H)$-absorbers, it is not guaranteed that we can always find many type-$2$ and type-$3$ absorbers in a graph $G$ if $G$ is close to being bipartite. 
Indeed, we only use type-$1$ $\mathrm{Sub}(H)$-absorbers to prove \Cref{thm:main}.
On the other hand, if $\hcf_{\xi}(H)$ is two, type-$1$ absorbers are not sufficient to build the desired absorbers. So, the proof of \Cref{thm:hcf-2} uses all three types of absorber units.

As we mentioned before, not all vertices may have many absorber units for them. So, it would be useful if we could somehow control what vertices remain uncovered. The following structure allows us to "exchange" the remaining vertices so that we can absorb those vertices better.

\begin{definition}\label{def:exchanger}
     Let $H$ be a graph and $(U, V)$ be a bipartition of $H^1$ such that $U$ consists of the branch vertices. Let $x_0y_0 \in E(H)$ be an arbitrary edge. Then there is a unique vertex $z\in V$ such that $x_0z, y_0z\in E(H^1).$ Let $S_H = H^1 - z.$ We now introduce two new vertices $u$ and $v$ and add edges $ux_0, uy_0, vx_0, vy_0$ to $S_H$ and we denote by $\hat{S}_H$ the obtained graph.

    Let $G$ be a graph and two distinct vertices $a, b\in V(G).$ We say a set $B\subseteq V(G)\setminus \{a, b\}$ is a \emph{$\mathrm{Sub}(H)$-exchanger} for $\{a, b\}$ if $|B| = v(S_H)$ and there is an embedding $\phi: V(\hat{S}_H)\to B\cup\{a, b\}$ such that $\phi(u) = a$ and $\phi(v) = b.$
\end{definition}

We assume that the choice of $ x_0y_0 $ is always the same for all $ H $, thus $ S_H $ is uniquely determined for all $ H $. We note that both $ S_H + a $ and $ S_H + b $ are subdivisions of $ H $. The following definition is a graph containing information on pairs that have many $ \mathrm{Sub}(H) $-exchangers.

\begin{definition}
    Let $ H $ be a graph and $ d > 0 $. For a given $ n $-vertex graph $ G $ and $ B \subseteq V(G) $, let $ F $ be an auxiliary graph on the vertex set $ V(G) $ such that $ uv \in E(F) $ if and only if $ u \notin B, v \in B $ and the number of $ \mathrm{Sub}(H) $-exchangers for $ \{u, v\} $ is at least $ dn^{v(S_H)} $. We say such an auxiliary graph $ F $ is a \emph{$(d, B)$-exchanging graph}.
\end{definition}

In order to prove the existence of $ \mathrm{Sub}(H) $-absorbers, we need to collect several lemmas regarding the following definition.

\begin{definition}
    Let $ n $, $ t $ be positive integers, and let $ d \in (0, 1) $. Let $ X $, $ Y $ be sets such that $ |X| \leq n^2 $ and $ |Y| = n $. We say a collection of pairs $ \calP \subseteq X \times \binom{Y}{t} $ is an \emph{$(n, t, d)$-family} on $ (X, Y) $ if $ |\{Z \subseteq Y: (x, Z) \in \calP\}| \geq dn^t $ for all $ x \in X $.
\end{definition}

The following is the key lemma to obtain $ \mathrm{Sub}(H) $-absorbers.

\begin{lemma}\label{lem:absorber-proof}
    Let $ 0 < \frac{1}{n} \ll \beta \ll d, \alpha, \frac{1}{t} \leq 1 $. Let $ \calP $ be an $(n, t, d)$-family on a pair of sets $ (X, Y) $. Then there is $ \calY \subseteq \binom{Y}{t} $ which satisfies the following:
    \begin{enumerate}
        \item[$\bullet$] $ \calY $ is a collection of pairwise disjoint sets,
        \item[$\bullet$] $ |\calY| \leq \frac{\alpha}{t} n $ and $ |\calY| $ is even,
        \item[$\bullet$] for every $ x \in X $, the size of the set $ \{Z \in \calY: (x, Z) \in \calP\} $ is at least $ \beta n $.
    \end{enumerate}
\end{lemma}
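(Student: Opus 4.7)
The plan is a standard deletion/alteration argument from the probabilistic method. Set $p \defeq c/n^{t-1}$, where $c = c(d, \alpha, t) > 0$ is a sufficiently small constant (in particular, $c \ll d, \alpha$). Sample $\calY_0 \subseteq \binom{Y}{t}$ by including each $t$-subset of $Y$ independently with probability $p$. Three quantities must be controlled: the size $|\calY_0|$, the number of intersecting pairs inside $\calY_0$, and, for each $x \in X$, the count $|\calY_0 \cap \calP_x|$, where $\calP_x \defeq \{Z \in \binom{Y}{t} : (x, Z) \in \calP\}$.

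Direct first-moment computations give $\mathbb{E}|\calY_0| = p\binom{n}{t} \leq cn/t!$, which is at most $\alpha n/(4t)$ by the choice of $c$. The hypothesis $|\calP_x| \geq d n^t$ yields $\mathbb{E}|\calY_0 \cap \calP_x| \geq p d n^t = cdn$. The expected number of (unordered) pairs of intersecting sets inside $\calY_0$ is at most
\begin{equation*}
p^2 \cdot n \binom{n-1}{t-1}^2 \leq \frac{c^2 n}{((t-1)!)^2},
\end{equation*}
which, by choosing $c$ small enough relative to $d$, is at most $cdn/100$. Because $|\calY_0 \cap \calP_x|$ is a sum of $|\calP_x|$ independent Bernoulli variables with mean $\Omega(n)$, a Chernoff bound gives $\mathbb{P}[|\calY_0 \cap \calP_x| < cdn/2] \leq e^{-\Omega(n)}$, so a union bound over the at most $n^2$ elements of $X$ yields $|\calY_0 \cap \calP_x| \geq cdn/2$ simultaneously for every $x \in X$ with probability $1 - o(1)$. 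Combining with Markov's inequality applied to $|\calY_0|$ and to the count of intersecting pairs gives that, with positive probability, $\calY_0$ simultaneously satisfies $|\calY_0| \leq \alpha n /(2t)$, contains at most $cdn/10$ intersecting pairs, and has $|\calY_0 \cap \calP_x| \geq cdn/2$ for every $x \in X$.

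Fix such an outcome, and delete one member from every intersecting pair to obtain a pairwise-disjoint subfamily $\calY_1 \subseteq \calY_0$. At most $cdn/10$ sets are removed, so $|\calY_1| \leq \alpha n/(2t)$ and $|\calY_1 \cap \calP_x| \geq cdn/2 - cdn/10 \geq \beta n + 1$ for every $x$, using $\beta \ll c, d$. If $|\calY_1|$ happens to be odd, discard one further arbitrary set to enforce the even-parity requirement; this drops each $|\calY_1 \cap \calP_x|$ by at most one, so the resulting family $\calY$ still satisfies $|\calY \cap \calP_x| \geq \beta n$ and inherits the other two properties. The main obstacle is the simultaneous control over every $x \in X$: since $|X|$ may be as large as $n^2$, naive first-moment reasoning does not suffice, and one genuinely needs the Chernoff concentration together with the union bound (exploiting that the expectation $cdn$ grows linearly in $n$, so the exponential tail beats the $n^2$-sized union). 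The polynomial scale $p = \Theta(n^{-(t-1)})$ is equally critical---a larger $p$ produces too many intersecting pairs to delete cheaply, while a smaller $p$ fails the required $\beta n$ lower bound on $|\calY_0 \cap \calP_x|$.
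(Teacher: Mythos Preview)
Your proposal is correct and follows essentially the same approach as the paper's own proof: both sample $t$-subsets independently with probability $p = c/n^{t-1}$ for a small constant $c$, control $|\calY_0|$ and the number of intersecting pairs by first-moment/Markov-type bounds, use Chernoff together with a union bound over the at most $n^2$ elements of $X$ for the per-$x$ lower bound, and then delete one set from each intersecting pair (plus possibly one more set for parity). The only cosmetic difference is that the paper uses Chebyshev rather than Markov for the size bound, which does not affect the argument.
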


\begin{proof}
    Let $ c = \min\{\frac{\alpha}{2}, \frac{d}{10}\} $. Consider a random family $ \calT $ of size $ t $ subsets of $ Y $, such that each set in $ \binom{Y}{t} $ is selected independently at random with the probability $ p = \frac{c}{n^{t - 1}} $. Then $ |\calT| $ has a binomial distribution with an expectation less than or equal to $ \frac{\alpha}{2 t} n $. By Chebyshev's inequality, with probability $ 1 - o(1) $, the following holds.
    \begin{equation}\label{eq:1}
        |\calT| \leq \frac{\alpha}{t} n.
    \end{equation}

    For each $ x \in X $, let $ \calZ_x \defeq \{Z \subseteq Y: (x, Z) \in \calP\} $. Then the size $ |\calT \cap \calZ_x| $ has a binomial distribution with an expectation $ |\calZ_x| p \geq cd n $ for every $ x \in X $. By Chernoff bound, for each $ x \in X $, the inequality $ |\calT \cap \calZ_x| \geq \frac{cd}{2} n $ holds with probability $ 1 - o(n^{-3}) $. Since $ |X| \leq n^2 $, we have the following with probability $ 1 - o(1) $.
    \begin{equation}\label{eq:2}
        |\calT \cap \calZ_x| \geq \frac{cd}{2} n \text{ for all } x \in Z.
    \end{equation}
    
    We now count the number of intersecting pairs in $ \calT $. Let $ I $ be a random variable which is the number of intersecting pairs of $ \calT $. Since we have $ |\{(P, Q): P, Q \subseteq [n],\text{ } P \cap Q \neq \emptyset, \text{ and } |P| = |Q| = t\}| \leq n^{2t - 1} $, and each $ P $, $ Q $ are chosen to be in $ \calT $ with probability $ p = \frac{c}{n^{t-1}} $, we have $ \mathbb{E}(I) \leq c^2 n $. By Markov's inequality, with probability at least $ \frac{1}{2} $,
    \begin{equation}\label{eq:3}
        |I| \leq 2c^2 n.
    \end{equation}

    Thus, for all sufficiently large $ n $, there is a family $ \calT $ that satisfies all \Cref{eq:1,eq:2,eq:3}. Let $ \calY $ be a collection obtained from $ \calT $ by removing intersecting pairs from $ \calT $ and removing at most one arbitrary element to ensure that $ |\calY| $ is even.

    Then $ \calY $ is a collection of pairwise disjoint sets and by \eqref{eq:1}, we have $ |\calY| \leq |\calT| \leq \frac{\alpha}{t} n $ and $ |\calY| $ is even. By \eqref{eq:2} and \eqref{eq:3}, the size of the intersection $ |\calY \cap \calZ_x| \geq \frac{cd}{2} n - 2c^2 n - 1 \geq \frac{\alpha d^2}{100} n $ for all $ x \in X $. We now set $ \beta = \frac{\alpha d^2}{100} $. Then $ \calY $ is the desired collection. This completes the proof.
\end{proof}

We remark that \Cref{lem:absorber-proof} is a systematization of Lemma 2.3 from~\cite{rodl2006dirac}. One direct application of \Cref{lem:absorber-proof} is the existence of one-side perfect matchings for small sets, which is the following:

\begin{lemma}\label{lem:system-matching}
    Let $0 < \frac{1}{n} \ll \beta \ll \alpha, \eta < 1.$ 
    Let $G$ be an $n$-vertex graph. Suppose that a vertex subset $B \subseteq V(G)$ has size at least $\eta n$ and all vertices $v \in V(G) \setminus B$ satisfy $d(v, B) \geq \eta n.$ Then there is a set $B' \subseteq B$ such that $|B'| \leq \alpha n$ and for any $U \subseteq V(G) \setminus B$ with size at most $\beta n$, there is a matching on $G[U, B']$ which covers all vertices in $U.$   
\end{lemma}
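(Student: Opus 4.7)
The plan is to apply \Cref{lem:absorber-proof} with $t = 1$ to extract a small reservoir $B' \subseteq B$ such that every vertex of $V(G)\setminus B$ has linearly many neighbors in $B'$, and then invoke Hall's theorem to produce the matching for any admissible $U$.

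First I would set $X \defeq V(G)\setminus B$ and $Y \defeq B$, and define $\calP \defeq \{(v, \{b\}) : v \in X,\ b \in B,\ vb \in E(G)\} \subseteq X \times \binom{Y}{1}$. Since $|X| \leq n \leq |B|^2$ and the hypothesis gives $d_G(v; B) \geq \eta n \geq \eta|B|$ for every $v \in X$, the family $\calP$ is a $(|B|, 1, \eta)$-system on $(X, Y)$. Applying \Cref{lem:absorber-proof} with $n$ there replaced by $|B|$, $t = 1$, $d = \eta$, and the same constant $\alpha$ yields a collection $\calY$ of pairwise disjoint singletons in $B$ with $|\calY| \leq \alpha|B| \leq \alpha n$ such that, for every $v \in X$, at least $\beta_0|B| \geq \beta_0 \eta n$ of the singletons in $\calY$ lie in $N_G(v)$; here $\beta_0 = \beta_0(\alpha,\eta) > 0$ is the constant produced by that lemma. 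The hierarchy $\beta \ll \alpha, \eta$ then lets me assume $\beta \leq \tfrac{1}{2}\beta_0 \eta$. I would then set $B' \defeq \bigcup \calY \subseteq B$, which satisfies $|B'| \leq \alpha n$ as required.

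For the matching, let $U \subseteq V(G)\setminus B$ with $|U| \leq \beta n$. I claim that Hall's condition holds on the bipartite graph $G[U, B']$: for any nonempty $U' \subseteq U$, picking any $u \in U'$ gives $|N_G(U') \cap B'| \geq |N_G(u) \cap B'| \geq \beta_0 \eta n \geq 2\beta n \geq |U'|$. Hence Hall's theorem yields a matching saturating $U$.

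There is no genuine obstacle here; the statement is essentially the special case $t = 1$ of \Cref{lem:absorber-proof} followed by Hall's theorem. The only piece of bookkeeping is that $|B|$ may be strictly less than $n$, which is harmless: because $|B| \geq \eta n$, I apply \Cref{lem:absorber-proof} with $|B|$ in the role of $n$, and the parameter hierarchy $\beta \ll \alpha, \eta$ absorbs the resulting factor of $\eta$ when translating between $\beta_0|B|$ and $\beta n$.
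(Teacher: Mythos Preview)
Your proof is correct and follows essentially the same route as the paper: apply \Cref{lem:absorber-proof} with $t=1$ to extract the reservoir $B'$, then note that each $v\in V(G)\setminus B$ has at least $\beta n$ neighbours in $B'$, from which the matching follows immediately (the paper says ``greedily'' rather than invoking Hall, which amounts to the same thing). The only cosmetic difference is that the paper takes $Y=V(G)$ and works with an $(n,1,\eta)$-system on $(X,V(G))$, whereas you take $Y=B$ and work with a $(|B|,1,\eta)$-system; both are valid and the bookkeeping you note regarding $|B|$ versus $n$ is handled correctly.
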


\begin{proof}
    Let $X = V(G) \setminus B.$ Let $\calP = \{(x, \{y\}) \in X \times \binom{V(G)}{1} : xy \in E(G[X, B])\}.$ Then $\calP$ is an $(n, 1, \eta)$-family on $(X, V(G)).$ As $\frac{1}{n} \ll \beta \ll \alpha, \eta$, by applying \Cref{lem:absorber-proof} with the parameters $\alpha$, $\eta$, $\beta$, and $n$ playing the roles of $\alpha$, $d$, $\beta$ and $n$, respectively, we obtain $\calY \subseteq \binom{B}{1}$ satisfying the following. $|\calY| \leq \alpha n$ and $|\{(v, \{b\}) \in \calP : \{b\} \in \calY\}| \geq \beta n$ for all $x \in X.$
    Let $B' = \bigcup_{\{b\} \in \calY} \{b\}.$ Then $|B'| \leq \alpha n$ and for any vertex $v \in V(G) \setminus B$, the degree $d(v, B') \geq \beta n.$ Thus, for any subset $U \subseteq V(G) \setminus B$ with size at most $\beta n$, we can find a matching $M$ greedily in $G[U, B']$ which covers all vertices of $U.$
\end{proof}

Using \Cref{lem:absorber-proof}, we can obtain the following lemma.

\begin{lemma}\label{lem:exchanging-graph}
    Let $H$ be an $h$-vertex graph and $0 < \frac{1}{n}, \beta \ll d, \alpha, \eta, \frac{1}{h} < 1.$
    Let $G$ be an $n$-vertex graph and a vertex subset $B \subseteq V(G)$ has size at least $\eta n.$ Let $F$ be the $(d, B)$-exchanging graph of $G.$ Let $U = \{v \in V(G) \setminus B : d_F(v) \geq \frac{\eta}{3} n\}.$ Then there exists a set $A \subseteq V(G)$ satisfying the following:
    \begin{enumerate}
        \item[$\bullet$] $G[A]$ has a perfect $H$-subdivision tiling,
        \item[$\bullet$] $|A| \leq \alpha n$,
        \item[$\bullet$] for any $U' \subseteq U \setminus A$ with $|U'| \leq \beta n$, there exists a subset $W \subseteq A \cap B$ such that $|W| = |U'|$ and $G[(A \cup U') \setminus W]$ has a perfect $H$-subdivision tiling.
    \end{enumerate}
\end{lemma}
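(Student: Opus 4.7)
\emph{Plan.} The idea is that a $Sub(H)$-exchanger for $\{u,v\}$ lets us swap $v$ for $u$ inside a subdivision of $H$: by the definition of $\hat{S}_H$, both $S_H + u$ and $S_H + v$ are subdivisions of $H$. So I would build $A$ as a disjoint union of exchangers, each currently using some $v \in B$, with enough redundancy that for any small $U' \subseteq U$ we can exchange the $v$'s for the corresponding $u$'s.

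To do this, set $t := v(H^1)$ and apply \Cref{lem:absorber-proof} to the following system on $(U, V(G))$: declare $(u, Z) \in \calP$ whenever $Z = E \cup \{v\}$ for some $v \in B$ with $uv \in E(F)$ and some exchanger $E$ for $\{u,v\}$. For each $u \in U$ the set $N_F(u) \subseteq B$ has size at least $\eta n/3$ by definition of $U$, each such $v$ contributes at least $d n^{v(S_H)}$ exchangers, and a given $t$-set is counted at most $t$ times (once per $v \in Z \cap B$), so $|\{Z : (u,Z) \in \calP\}| \ge d'' n^t$ where $d'' := d\eta/(3t)$. Hence $\calP$ is an $(n, t, d'')$-system, and \Cref{lem:absorber-proof} produces a pairwise disjoint family $\calY \subseteq \binom{V(G)}{t}$ of size at most $(\alpha/t) n$ such that every $u \in U$ has at least $\beta' n$ compatible $Z \in \calY$, for some constant $\beta' \gg \beta$.

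I then take $A := \bigcup_{Z \in \calY} Z$, so $|A| \le \alpha n$, and for each $Z \in \calY$ fix a witness triple $(u_Z, v_Z, E_Z)$; then $G[Z]$ contains a copy of $S_H + v_Z$, which is a subdivision of $H$, so $G[A]$ has a perfect $H$-subdivision tiling. For $U' \subseteq U \setminus (A \cup B)$ with $|U'| \le \beta n$, I would greedily assign each $u \in U'$ a distinct compatible $Z_u \in \calY$ (possible since $\beta n < \beta' n$ leaves unused options at every step), with corresponding witness $(v_u, E_u)$ satisfying $Z_u = E_u \cup \{v_u\}$ and $v_u \in B$. Setting $W := \{v_u : u \in U'\} \subseteq A \cap B$, which has size $|U'|$ by the disjointness of $\calY$, the tiling of $G[(A \cup U') \setminus W]$ uses $G[Z]$ on every unmatched $Z$ and a copy of $S_H + u$ inside $G[E_u \cup \{u\}]$ on each matched $Z_u$.

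The main obstacle is the overcounting issue that forces the $1/t$ factor in $d''$: the same $t$-set $Z$ can legitimately play the role of ``exchanger plus swap vertex'' for several different pairs $(u, v)$, and the counting must account for this to supply a genuine $(n,t,d'')$-system. Once the system lemma is in place, the remaining steps — assembling the initial tiling from the witnesses, the greedy matching, and rebuilding the tiling after the swap — are routine.
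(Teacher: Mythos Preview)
Your proposal is correct and follows essentially the same route as the paper: set $t = v(S_H)+1 = v(H^1)$, build the $(n,t,d'')$-system of pairs $(u,Z)$ with $Z=E\cup\{v\}$ for some $v\in B$ and exchanger $E$, apply \Cref{lem:absorber-proof}, take $A=\bigcup_{Z\in\calY}Z$, and exchange greedily. One small technical point worth patching: \Cref{lem:absorber-proof} as stated does not guarantee that every $Z\in\calY$ occurs as a second coordinate in $\calP$, so your step ``for each $Z\in\calY$ fix a witness triple'' needs the one-line fix of first discarding from $\calY$ any $Z$ with no witness (this only shrinks $\calY$, so all three conclusions survive); the paper's own proof glosses over this and does not verify the first bullet explicitly, so your write-up is in fact more careful.
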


\begin{proof}
    We recall that $\hat{S}_H$ is a graph defined in \Cref{def:exchanger} which has two vertices $u$ and $v$ such that both $S_H - u$ and $S_H - v$ contain spanning subdivisions of $H.$ 

    Let $$\calP = \{(u, Z) \in U \times \binom{V(G)}{v(S_H) + 1} : \exists v \in Z \cap B \text{ s.t. } Z - v \text{ is a } \mathrm{Sub}(H) \text{-exchanger for } \{u, v\}\}.$$
    For each $u \in U$, the number of pairs $(v, S) \in B \times \binom{V(G)}{v(S_H)}$ such that $S$ is a $\mathrm{Sub}(H)$-exchanger for $\{u, v\}$ is at least $\frac{\eta d}{3} n^{v(S_H) + 1}.$ Thus, $\calP$ is an $(n, v(S_H) + 1, \frac{\eta d}{3(v(S_H) + 1)})$-family on $(U, V(G)).$ 

    As $v(S_H) + 1 \leq h^2$ and $\frac{1}{n}, \beta \ll d, \alpha, \eta, \frac{1}{h}$, by applying \Cref{lem:absorber-proof} with $v(S_H) + 1$, $\alpha$, $\frac{\eta d}{3(v(S_H) + 1)}$ and $\beta (v(S_H) + 1)$ playing the roles of $t$, $\alpha$, $d$ and $\beta$, respectively, we obtain a collection of vertex-disjoint subsets $\calY \subseteq \binom{V(G)}{v(S_H) + 1}$ satisfying the following.

    \begin{enumerate}
        \item $|\calY| \leq \frac{\alpha}{v(S_H) + 1} n$,
        \item $|\{Z \in \calY : (u, Z) \in \calP \}| \geq \beta (v(S_H) + 1) n$ for all $u \in U.$
    \end{enumerate}
    
    Let $A \defeq \bigcup_{Z \in \calY} Z.$ Then $|A| \leq \alpha n.$
    Let $U' \subseteq U \setminus A$ with size at most $\beta n.$ 
    By our choice of $U$ and $\calY$, for each vertex $x \in U'$, there are at least $\frac{\beta (v(S_H) + 1)}{v(S_H) + 1} n = \beta n$ distinct pairs $(u, v)$ which use distinct $\mathrm{Sub}(H)$-exchangers in $\calY$ such that $v \in A \cap B$ and there is $Z \in \calY$, where $v \in Z$ and $Z - v$ is a $\mathrm{Sub}(H)$-exchanger for $\{u, v\}.$
    Thus, we can greedily exchange all vertices in $U'$ with a set of distinct vertices $W$, by using $\mathrm{Sub}(H)$-exchangers, where $W \subseteq A \cap B$ and $|W| = |U'|.$
\end{proof}

%---------------------------------------------------------------
\subsection{Type-$1$ $\mathrm{Sub}(H)$-absorbers}
The following lemma is useful to get $\mathrm{Sub}(H)$-absorbers which use type-1 $\mathrm{Sub}(H)$-absorbers.

\begin{lemma}\label{lem:type1-absorber}
    Let $H$ be an $h$-vertex graph and $0 < \frac{1}{n}, \beta \ll d, \alpha, \frac{1}{h} \leq 1.$
    Let $G$ be an $n$-vertex graph and $F$ be the $d$-absorbing graph of $G.$ Then there exists a set $A \subseteq V(G)$ satisfying the following.
    \begin{enumerate}
        \item[$\bullet$] $ |A| \leq \alpha n $,
        \item[$\bullet$] $ |A| $ is even,
        \item[$\bullet$] The set $A$ is a $\mathrm{Sub}(H)$-absorber for any vertex set $U$ such that $U \subseteq V(G) \setminus A$, $ |U| \leq \beta n $, and $F[U]$ has a perfect matching.
    \end{enumerate}
\end{lemma}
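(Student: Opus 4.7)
The plan is to apply \Cref{lem:absorber-proof} to an $(n, t, d)$-system whose index set is $E(F)$ and whose ground set is $V(G)$, and then take the union of the output family as the absorber $A$. Set $t \defeq v(T^1_H)$, $X \defeq E(F)$, and $Y \defeq V(G)$, so $|X| \leq \binom{n}{2} \leq n^2$. For each edge $uv \in E(F)$, the definition of the $d$-absorbing graph provides a collection $\calZ_{uv}$ of at least $dn^t$ type-$1$ $Sub(H)$-absorbers for $\{u,v\}$. Thus $\calP \defeq \{(uv, Z) \in X \times \binom{Y}{t} : Z \in \calZ_{uv}\}$ is an $(n, t, d)$-system on $(X, Y)$.

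I would then apply \Cref{lem:absorber-proof} with $t$, $d$, and $\alpha/t$ in place of $t$, $d$, and $\alpha$, producing an auxiliary threshold $\beta_1 \ll d, \alpha, 1/t$ and a family $\calY \subseteq \binom{V(G)}{t}$ of pairwise disjoint $t$-sets such that $|\calY| \leq (\alpha/t^2)n$ is even and $|\calY \cap \calZ_{uv}| \geq \beta_1 n$ for every $uv \in E(F)$. Define $A \defeq \bigcup_{Z \in \calY} Z$. Then $|A| = t|\calY| \leq \alpha n$ and $|A|$ is even because $|\calY|$ is even. Since every $Z \in \calY$ is a copy of $T^1_H$, itself a subdivision of $H$, the collection $\calY$ directly yields a perfect $H$-subdivision tiling of $G[A]$.

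For the absorbing property, let $U \subseteq V(G) \setminus A$ satisfy $|U| \leq \beta n$ and suppose $F[U]$ contains a perfect matching $M$, so $|M| \leq \beta n / 2$. I would greedily assign to each edge $e = uv \in M$ a previously-unused $Z_e \in \calY \cap \calZ_{uv}$. The greedy step never fails: at each step fewer than $|M| \leq \beta n/2$ sets of $\calY$ have been used, whereas $|\calY \cap \calZ_{uv}| \geq \beta_1 n$, and the hierarchy $\beta \ll d, \alpha, 1/h$ forces $\beta/2 < \beta_1$. Once all absorbers are fixed, I assemble a perfect $H$-subdivision tiling of $A \cup U$ by taking, for each $e \in M$, the spanning subdivision of $H$ inside the copy of $\hat{T}^1_H$ on $Z_e \cup \{u, v\}$, and, for each unused $Z \in \calY$, the subdivision $T^1_H$ on $Z$.

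The only substantive concern is parameter bookkeeping: one must check that the $\beta_1$ produced by \Cref{lem:absorber-proof} exceeds our input $\beta$, and that the factor-$t$ blow-up from $|\calY|$ to $|A|$ still respects $|A| \leq \alpha n$. Since $t = v(T^1_H)$ is a constant depending only on $h$, both verifications are immediate from the hierarchy $1/n, \beta \ll d, \alpha, 1/h$, and no structural difficulty arises beyond the correct choice of constants.
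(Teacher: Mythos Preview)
Your proof is correct and follows the same route as the paper: set up the $(n, v(T^1_H), d)$-system indexed by $E(F)$, invoke \Cref{lem:absorber-proof}, let $A$ be the union of the output family $\calY$, and absorb greedily along a perfect matching of $F[U]$; the only differences are cosmetic parameter choices (you feed $\alpha/t$ and an auxiliary $\beta_1$, the paper feeds $\alpha$ and $\beta$ directly). One small caution: \Cref{lem:absorber-proof} as stated does not promise that every $Z \in \calY$ actually appears in $\calP$, so your sentence ``every $Z \in \calY$ is a copy of $T^1_H$'' is not literally justified by the lemma---the paper is equally silent on why $G[A]$ tiles, and the routine fix (sampling only from $\bigcup_e \calZ_e$, or discarding the non-absorber sets in pairs) costs nothing.
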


\begin{proof}
    Let 
    $$
    \calP = \{(e, Z) \in E(F) \times \binom{V(G)}{v(T^1_H)} : Z \text{ is a type-1 } \mathrm{Sub}(H)\text{-absorber for } e\}.
    $$ 
    Then $\calP$ is an $(n, v(T^1_H), d)$-family on $(E(F), V(G))$. By applying \Cref{lem:absorber-proof} with $v(T^1_H)$, $\alpha$, $d$, and $\beta$ playing the roles of $t$, $\alpha$, $d$, and $\beta$, respectively, we obtain a collection of vertex-disjoint subsets $\calY \subseteq \binom{V(G)}{v(T^1_H)}$ satisfying the following.

    \begin{enumerate}
        \item $ |\calY| \leq \frac{\alpha}{v(T^1_H)} n $,
        \item $ |\calY| $ is even,
        \item $ |\{Z \in \calY : (e, Z) \in \calP \}| \geq \beta n $ for all $ e \in E(F) $.
    \end{enumerate}
    
    Let $A = \bigcup_{Z \in \calY} Z$. Then $ |A| $ is even and less than or equal to $\alpha n$. 
    Let $U$ be a subset of $V(G) \setminus A$ with size at most $\beta n$. Assume $F[U]$ has a perfect matching $M$. Then $ e(M) \leq \frac{\beta}{2} n $. By our choice of $\calY$, for each edge $e \in E(M)$, there are at least $\beta n$ distinct type-1 $\mathrm{Sub}(H)$-absorbers. Thus, $A$ can absorb every pair $e \in E(M)$ greedily. This means $A$ is a $\mathrm{Sub}(H)$-absorber for $U$. This proves the lemma. 
\end{proof}

%---------------------------------------------------------------
\subsection{Type-$2$ and type-$3$ $\mathrm{Sub}(H)$-absorbers}
In this subsection, we will show how we can obtain $\mathrm{Sub}(H)$-absorbers via type-2 and type-3 $\mathrm{Sub}(H)$-absorbers, respectively.

\begin{lemma}\label{lem:type2-absorber}
    Let $H$ be an $h$-vertex graph and $0 < \frac{1}{n}, \beta \ll d, \alpha, \eta, \frac{1}{h} \leq 1.$ Let $j \in \{2, 3\}.$
    Let $G$ be an $n$-vertex graph and let $B \subseteq V(G)$ with size at least $\eta n$ such that for all $v \in B$, there are at least $dn^{v(T^j_H)}$ type-$j$ $\mathrm{Sub}(H)$-absorbers for $v.$ Let $F_1$ and $F_2$ be the $d$-absorbing graph and the $(d, B)$-exchanging graph of $G$, respectively. Assume that for every $v \in V(G) \setminus B$, at least one of the inequalities $d_{F_1}(v; B) \geq \frac{\eta}{3} n$ or $d_{F_2}(v; B) \geq \frac{\eta}{3} n$ holds. Then, there is a set $A \subseteq V(G)$ with size at most $\alpha n$ such that for all $U \subseteq V(G) \setminus A$ with size at most $\beta n$, the set $A$ is a $\mathrm{Sub}(H)$-absorber for $U.$
\end{lemma}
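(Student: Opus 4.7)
The plan is to construct $A = A_0 \cup A_1 \cup A_2$ as a disjoint union of three pre-built absorber structures, each handling one kind of leftover vertex. I would split $V(G) \setminus B$ into the \emph{type-$1$ side} $V_1^* := \{v \in V(G)\setminus B : d_{F_1}(v; B) \geq \frac{\eta}{3}n\}$ and the \emph{type-$2$ side} $V_2^* := V(G) \setminus (B \cup V_1^*)$, so that by hypothesis every vertex of $V_2^*$ has $d_{F_2}(v; B) \geq \frac{\eta}{3}n$. To deal with $V_2^*$, I would apply \Cref{lem:exchanging-graph} to $F_2$ and obtain a set $A_2$ of $Sub(H)$-exchangers, with the property that any small $U'_2 \subseteq V_2^*$ can be swapped for an equal-size $W_2 \subseteq A_2 \cap B$. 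To absorb single $B$-vertices, I would form the $(n, v(T^j_H), d)$-system $\calP_0 := \{(b, T) : b \in B,\ T \text{ is a type-$j$ $Sub(H)$-absorber for } b\}$ on $(B, V(G))$, apply \Cref{lem:absorber-proof}, and set $A_0 := \bigcup \calY_0$, so that each $b \in B$ has a linear number of usable absorbers in $\calY_0$.

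The key new ingredient is $A_1$, which has to absorb $V_1^*$-vertices using only the \emph{pair}-absorbing type-$1$ machinery. My idea is to package each type-$1$ absorber with a dedicated type-$j$ absorber that handles its reserved $B$-partner. Concretely, for $u \in V_1^*$ I call $Y := Z \cup T \cup \{b_0\}$ a \emph{combined unit} for $u$ if $b_0 \in B$ is an $F_1$-neighbor of $u$, $Z$ is a type-$1$ $Sub(H)$-absorber for $\{u, b_0\}$, $T$ is a type-$j$ $Sub(H)$-absorber for $b_0$, and $Z$, $T$, $\{b_0\}$ are pairwise disjoint. In the \emph{idle} state $G[Z]$ and $G[T \cup \{b_0\}]$ each carry a perfect $H$-subdivision tiling, so $G[Y]$ does; in the \emph{active} state I instead use $G[Z \cup \{u, b_0\}]$ together with $G[T]$, which give a perfect $H$-subdivision tiling of $G[Y \cup \{u\}]$. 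Multiplying $d_{F_1}(u;B) \geq \frac{\eta}{3}n$ by the type-$1$ and type-$j$ absorber counts and handling the disjointness losses via \Cref{obs:delet-small-vertices}, the number of combined units for each $u \in V_1^*$ is at least $\Omega(n^{v(T^1_H)+v(T^j_H)+1})$, giving an $(n, v(T^1_H)+v(T^j_H)+1, d')$-system for some $d' = d'(d, \eta, h) > 0$; I apply \Cref{lem:absorber-proof} to obtain a disjoint family $\calY_1$ and set $A_1 := \bigcup \calY_1$.

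To make $A_0, A_1, A_2$ pairwise disjoint, I would construct them sequentially and use \Cref{obs:delet-small-vertices} to preserve the required system densities after deleting the previously committed vertices; the parameters can be chosen so that $|A| \leq \alpha n$. For any $U \subseteq V(G)\setminus A$ with $|U| \leq \beta n$, decompose $U = U_1 \sqcup U_2 \sqcup U_B$ along $V_1^*, V_2^*, B$; first use $A_2$ to absorb $U_2$, which relocates its leftover into a set $W_2 \subseteq A_2 \cap B$ with $|W_2| = |U_2|$; then greedily route each $u \in U_1$ to a fresh combined unit in $\calY_1$; and finally greedily route each $b \in U_B \cup W_2$ to a fresh type-$j$ absorber in $\calY_0$. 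Choosing $\beta$ sufficiently small relative to the availability guarantees from \Cref{lem:absorber-proof} makes every greedy step succeed, so $G[A \cup U]$ has a perfect $H$-subdivision tiling.

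The main obstacle is the construction of $A_1$: since a type-$1$ absorber absorbs a pair, simply stockpiling $B$-vertices inside $A$ to pair with $V_1^*$-vertices would leave those $B$-vertices uncovered in the idle state. Welding each type-$1$ absorber to a private type-$j$ absorber for its chosen partner gives a unit that tiles correctly in both states, and once this device is in place the remainder of the proof is routine system-counting and greedy bookkeeping.
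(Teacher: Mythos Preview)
Your argument is correct and shares the paper's overall architecture: split $V(G)\setminus B$ into the type-$1$ side $V_1^*$ (the paper's $X$) and the exchanger side $V_2^*$ (the paper's $Y$), handle $V_2^*$ via \Cref{lem:exchanging-graph}, and handle $B$ by a direct application of \Cref{lem:absorber-proof} to the system of type-$j$ absorbers. The genuine difference is in how you absorb $V_1^*$-vertices. The paper builds three interacting layers $A_0\cup A_0'\cup A_0''$: a reservoir $A_0''\subseteq B$ of partner vertices obtained from \Cref{lem:system-matching}, a bank $A_0'$ of type-$1$ pair-absorbers from \Cref{lem:type1-absorber}, and the type-$j$ bank $A_0$ that swallows the unused partners in the idle state. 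You instead fuse these three roles into a single ``combined unit'' $Y=Z\cup T\cup\{b_0\}$ with two self-contained tilings (idle $G[Z]\sqcup G[T\cup\{b_0\}]$ versus active $G[Z\cup\{u,b_0\}]\sqcup G[T]$), and then run \Cref{lem:absorber-proof} once on the resulting $(n,\,v(T^1_H)+v(T^j_H)+1,\,d')$-system. Your device avoids the bookkeeping of shuttling $A_0''$ between $A_0$ and $A_0'$, at the cost of a larger unit size and the minor extra care of excluding $u$ from $T$ when counting; the paper's layered approach keeps each gadget minimal and reuses \Cref{lem:type1-absorber} and \Cref{lem:system-matching} as black boxes. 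Both routes are sound and roughly equal in length.
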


\begin{proof}
    Choose additional constants $\beta_0, \alpha_0, \alpha_1$ so that the following holds:
    $$
    0 < \frac{1}{n}, \beta \ll \beta_0 \ll \alpha_0 \ll \alpha_1 \ll d, \alpha, \eta, \frac{1}{h} \leq 1.
    $$
    We write $R = V(G) \setminus B$, and $X = \{v \in R : d_{F_1}(v; B) \geq \frac{\eta}{3} n\}$. Let $Y = R \setminus X.$ We note that for all $v \in Y$, we have $d_{F_2}(v; B) \geq \frac{\eta}{3} n.$ We now collect the following two claims.

    \begin{claim}\label{clm:type2-blue-absorber}
        There exists a set $A_0 \subseteq V(G)$ satisfying the following. The size of the set $A_0$ is less than or equal to $\alpha_0 n$, and for any $U \subseteq B \setminus A_0$ with size at most $\beta_0 n$, the set $A_0$ is a $\mathrm{Sub}(H)$-absorber for $U.$
    \end{claim}

    \begin{claimproof}[Proof of \Cref{clm:type2-blue-absorber}]
        Let 
        $$
        \calP = \{(x, Z) \in B \times \binom{V(G)}{v(T^j_H)} : Z \text{ is a type-}j \text{ } \mathrm{Sub}(H)\text{-absorber for } x\}.
        $$
        Then $\calP$ is an $(n, v(T^j_H), d)$-family on $(B, V(G))$. By applying \Cref{lem:absorber-proof} with $v(T^j_H)$, $\alpha_0$, $d$, and $\beta_0$ playing the roles of $t$, $\alpha$, $d$, and $\beta$, respectively, we obtain a collection of pairwise disjoint sets $\calY \subseteq \binom{V(G)}{v(T^j_H)}$ satisfying the following.
        \begin{enumerate}
            \item $ |\calY| \leq \frac{\alpha_0}{v(T^j_H)} n $,
            \item $ |\{Z \in \calY : (x, Z) \in \calP \}| \geq \beta_0 n $ for all $x \in B.$
        \end{enumerate}
        Let $A_0 \defeq \bigcup_{Z \in \calY} Z.$ Then $ |A_0| \leq \alpha_0 n.$
        Let $U \subseteq B \setminus A_0$ with size at most $\beta_0 n.$ By our choice of $\calY$, for each vertex $x \in U$, there are at least $\beta_0 n$ distinct type-$j$ $\mathrm{Sub}(H)$-absorbers in $A_0$. Thus, $A_0$ can absorb all vertices in $U$ greedily. This means $A_0$ is a $\mathrm{Sub}(H)$-absorber for $U.$ This proves the claim.
    \end{claimproof}

    \begin{claim}\label{clm:type2-type1}
There exists a set $ A_1 \subseteq V(G) $ with size at most $ \alpha_1 n $ satisfying the following: for any $ U \subseteq (B \cup X) \setminus A_1 $ with size at most $ \beta n $, the set $ A_1 $ is a $\mathrm{Sub}(H)$-absorber for $ U $.
\end{claim}

\begin{claimproof}[Proof of \Cref{clm:type2-type1}]
We fix additional parameters $\beta''_0, \alpha''_0, \beta'_0, \alpha'_0$ so that the following holds: $\beta \ll \beta''_0 \ll \alpha''_0 \ll \beta_0, \beta'_0 \ll \alpha_0, \alpha'_0 \ll \alpha_1$. By \Cref{clm:type2-blue-absorber}, there exists a set $ A_0 \subseteq B $ such that $ |A_0| \leq \alpha_0 n $ and for any $ U \subseteq B \setminus A_0 $ with size at most $ \beta_0 n $, the set $ A_0 $ is a $\mathrm{Sub}(H)$-absorber for $ U $. Let $ G_1 = G - A_0 $, $ B_1 = B \setminus A_0 $, and $ X_1 = X \setminus A_0 $. Then $ v(G_1) \geq (1-\alpha_0)n $ and $ |B_1| \geq \frac{\eta}{2} n $. By \Cref{obs:delet-small-vertices}, we know that for every vertex $ v \in B_1 $, the number of type-$ j $ $\mathrm{Sub}(H)$-absorbers in $ G_1 $ is at least $\frac{d}{2}n_1^{v(T^j_H)}$, and for each pair of vertices $ (u, v) \in X_1 \times B_1 $, where $ uv \in E(F_1) $, the number of type-1 $\mathrm{Sub}(H)$-absorbers for $ \{u, v\} $ is at least $\frac{d}{2}n_1^{v(T^1_H)}$. Let $ F'_1 $ be the $\frac{d}{2}$-absorbing graph of $ G_1 $. Then for every vertex $ v \in X_1 $, the degree $ d_{F'_1}(v, B_1) \geq \frac{\eta}{3}n - \alpha_0 n \geq \frac{\eta}{4}n $.

By \Cref{lem:type1-absorber}, there is $ A'_0 \subseteq V(G_1) $ with size at most $ \alpha'_0 n $ such that for any $ U \subseteq V(G_1) \setminus A'_0 $ with size less than $ \beta'_0 n $ which has a perfect matching in $ F'_1 $, the set $ A'_0 $ is a $\mathrm{Sub}(H)$-absorber for $ U $.

Let $ G_2 = G_1 - A'_0 $, $ B_2 = B_1 \setminus A'_0 $, and $ X_2 = X_1 \setminus A'_0 $. Then $ v(G_2) \geq (1 - \alpha_0 - \alpha'_0)n $ and $ |B_2| \geq \frac{\eta}{4} n $. For every $ v \in X_2 $, the degree $ d_{F'_1}(v;B_2) \geq \frac{\eta}{4} n - \alpha'_0 n \geq \frac{\eta}{10} n $. Then by \Cref{lem:system-matching}, there is a set $ A''_0 \subseteq B_2 $ with size at most $ \alpha''_0 n $ such that for any $ U \subseteq X_2 \setminus A''_0 $ with size less than $ \beta''_0 n $, the graph $ F'_1[U, A''_0] $ has a matching $ M $ which covers all vertices of $ U $.

Now, let $ A_1 = A_0 \cup A'_0 \cup A''_0 $. Then the size $ |A_1| \leq (\alpha_0 + \alpha'_0 + \alpha''_0)n \leq \alpha_1 n $. Note that since $ A''_0 \subseteq B_1 $ and $ |A''_0| \leq \alpha''_0 n \leq \frac{\beta'_0}{2} n $, the induced graph $ G[A_0 \cup A''_0] $ has a perfect $ H $-subdivision tiling. Since $ G[A'_0] $ also has a perfect $ H $-subdivision tiling and $ A_0 $, $ A'_0 $, $ A''_0 $ are vertex-disjoint subsets, $ G[A_1] $ has a perfect $ H $-subdivision tiling.

We now claim that $ A_1 $ is the desired $\mathrm{Sub}(H)$-absorber. Let $ U \subseteq (B \cup X) \setminus A_1 $ with size at most $ \beta n $. Let $ U_B = U \cap B $ and $ U_X = U \cap X $. Since $ |U_X| \leq \beta n \leq \beta''_0 n $, by our choice of $ A''_0 $, there is a set $ W \subseteq A''_0 $ with the same size as $ U_X $ such that $ F'_1[U_X, W] $ has a perfect matching. Since $ |U_X \cup W| \leq 2\beta n \leq \beta'_0 n $, the induced graph $ G[(A'_0 \cup U_X \cup W)] $ has a perfect $ H $-subdivision tiling. Let $ W' = A''_0 \setminus W $. We note that $ |W' \cup U_B| \leq \beta n \leq \beta_0 n $ and $ (W' \cup U_B) \subseteq B_1 $, so by our choice of $ A_1 $, the induced graph $ G[(A_0 \cup W' \cup U_B)] $ has a perfect $ H $-subdivision tiling. Thus, $ G[A_1 \cup U] $ also has a perfect $ H $-subdivision tiling. Therefore, the set $ A_1 $ is a $\mathrm{Sub}(H)$-absorber for $ U $.
\end{claimproof}
    
By \Cref{clm:type2-type1}, there exists a set $A_1 \subseteq V(G)$ with size at most $\alpha_1 n$ such that for any subset $U$ of $(B \cup X) \setminus A_1$ with size at most $\beta n$, the set $A_1$ is a $\mathrm{Sub}(H)$-absorber for $U$. 
Let $G_1 = G - A_1$, $B_1 = B \setminus A_1$, $X_1 = X \setminus A_1$, and $Y_1 = Y \setminus A_1$. Then $v(G_1) \geq (1 - \alpha_1)n$ and $|B_1| \geq \frac{\eta}{2} n$. By \Cref{obs:delet-small-vertices}, we know that for each pair of vertices $(u, v) \in Y_1 \times B_1$, where $uv \in E(F_2)$, the number of $\mathrm{Sub}(H)$-exchangers for $\{u, v\}$ is at least $\frac{d}{2} n_1^{v(S_H)}$.
Let $F'_2$ be the $(\frac{d}{2}, B_1)$-exchanging graph of $G_1$. Then for every vertex $v \in Y_1$, the degree $d_{F'_2}(v, B_1) \geq \frac{\eta}{3}n - \alpha_1 n \geq \frac{\eta}{4} n$.

By \Cref{lem:exchanging-graph}, there is a subset $A_2$ of $V(G_1)$ with size at most $\alpha_1 n$ such that $G_1[A_2]$ has a perfect $H$-subdivision tiling and satisfies the following: for any $U \subseteq Y_1 \setminus A_2$ with size at most $\beta n$, there is a set $W \subseteq (A_2 \cap X_1)$ with the same size as $U$ such that $G_1[(A_2 \cup U) \setminus W]$ also has a perfect $H$-subdivision tiling.

Now, let $A = A_1 \cup A_2$. Then the inequality $|A| \leq 2 \alpha_1 n \leq \alpha n$ holds. Note that both $G[A_1]$ and $G[A_2]$ have perfect $H$-subdivision tilings and $A_1$ and $A_2$ are disjoint subsets of $V(G)$. Thus, $G[A]$ also has a perfect $H$-subdivision tiling.

We now claim that $A$ is the desired $\mathrm{Sub}(H)$-absorber. Let $U \subseteq V(G) \setminus A$ with size at most $\beta n$. Let $U_B = U \cap B$, $U_X = U \cap X$, and $U_Y = U \cap Y$. Since $|U_Y| \leq \beta n$, by our choice of $A_2$, there is a set $W \subseteq (A_2 \cap X_1)$ with the same size as $U_Y$ such that $G[(A_2 \cup U_Y) \setminus W]$ has a perfect $H$-subdivision tiling. Note that $|U_B \cup U_X \cup W| = |U| \leq \beta n$ and $(U_B \cup U_X \cup W) \subseteq (B \cup X) \setminus A_1$. Thus, $U_B \cup U_X \cup W$ can be absorbed by $A_1$. 
Thus, $G[A \cup U]$ has a perfect $H$-subdivision tiling. Therefore, $A$ is a $\mathrm{Sub}(H)$-absorber for $U$. This completes the proof.
\end{proof}

%---------------------------------------------------------------
\section{Proofs}\label{sec:proof}

%---------------------------------------------------------------

\subsection{Proof sketch}\label{subsec:sketch}

In this section, we will prove our main results, \Cref{thm:main,thm:hcf-2}. By \Cref{prop:not-exceed-1/2}, \Cref{prop:lowerbound-hcf}, \Cref{prop:hcf-not-2}, and \Cref{prop:hcf-2-1/3}, it suffices to prove the following two lemmas.

\begin{lemma}\label{lem:lemma-for-main1}
    Let $H$ be an $h$-vertex graph with $\hcf_{\xi}(H) = 1$. Let $0 < \frac{1}{n} \ll \gamma, \frac{1}{h} \leq 1$. Let $G$ be an $n$-vertex graph. Then the following holds: If $\delta(G) \geq \left(1 - \frac{1}{\xi(H)} + \gamma \right)n$, then $G$ has a perfect $H$-subdivision tiling.
\end{lemma}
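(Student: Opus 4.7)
The plan is to apply the absorption method. First I would build a linear-sized absorber set $A \subseteq V(G)$ using type-$1$ $Sub(H)$-absorbers via Lemma \ref{lem:type1-absorber}. Then, after removing $A$, I apply Theorem \ref{thm:chritical-constant-missing} to find an $H^*$-tiling of $G - A$ covering all but a constant number of vertices, and finally absorb the remaining vertices into $A$.

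For the absorber step, the key is to show that the $d$-absorbing graph $F$ on $V(G)$ has minimum degree at least $(1-\eta)n$ for a suitable $d = d(\gamma, H) > 0$ and small $\eta \ll \gamma$. This follows from a supersaturation argument: for any vertex $u$ and all but $\eta n$ choices of partner $v$, I count at least $d n^{v(T^1_H)}$ embeddings of $T^1_H$ into $V(G)\setminus\{u,v\}$ placing the four distinguished vertices of $T^1_H$ correctly into $N(u)$ and $N(v)$. Since $T^1_H$ is bipartite, this is achieved by fixing a ``rooted'' embedding of the four constrained vertices and extending via Lemma \ref{lem:fix-embedding} applied to the bipartite subgraph spanned by the neighborhoods. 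Given near-completeness of $F$, Lemma \ref{lem:type1-absorber} provides an absorber $A$ of even size at most $\alpha n$ (with $\alpha \ll \gamma$) absorbing any $U \subseteq V(G) \setminus A$ of size at most $\beta n$ for which $F[U]$ has a perfect matching, and small sets of even size in an $(1 - \eta)n$-minimum-degree graph trivially possess such a matching.

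Next I apply the almost-perfect tiling step: since $|A| \leq \alpha n$ is small, $\delta(G - A) \geq (1 - 1/\xi(H) + \gamma/2)|V(G - A)|$; combining with Observation \ref{obs:xi-chicr} which states $\chi_{cr}(H^*) \leq \xi(H)$, Theorem \ref{thm:chritical-constant-missing} yields an $H^*$-tiling of $G - A$ with at most $C = C(H)$ uncovered vertices. Denote the uncovered set by $U$, so $|U| \leq C$. To absorb $U$ into $A$ via type-$1$ absorbers, I need $|U|$ even. Here the hypothesis $hcf_{\xi}(H) = 1$ enters decisively: it guarantees bipartite subdivisions of $H$ with bipartition differences of both parities, hence subdivisions of $H$ with vertex counts of both parities. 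By swapping boundedly many $H^*$-copies in the near-perfect tiling for alternative bipartite subdivisions of $H$, I toggle the parity of $|U|$ to be even. Then $F[U]$ has a perfect matching, $A$ absorbs $U$, and combining with the modified $H^*$-tiling on $V(G)\setminus(A\cup U)$ produces a perfect $H$-subdivision tiling of $G$.

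The main obstacle is the first step: because $\xi(H)$ can be arbitrarily close to $1$, the assumed minimum degree of $G$ may be as low as $\gamma n$, so the supersaturation argument establishing near-completeness of $F$ is delicate. It must simultaneously control four neighborhood constraints on a bipartite graph whose effective density in $G$ is only guaranteed to be of order $\gamma$. A secondary technical point is that the parity adjustment at the end must stay within the $\beta n$ absorbing capacity of $A$ and must be compatible with the divisibility requirements (on $v(H^*)$ and $v(G-A)$) under which Theorem \ref{thm:chritical-constant-missing} is invoked; this is handled by absorbing the parity change into a constant-sized alteration of the tiling.
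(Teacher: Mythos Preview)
Your proposal has a genuine gap at the absorber step. The claim that the $d$-absorbing graph $F$ has minimum degree at least $(1-\eta)n$ is false in general. When $\xi(H)$ is close to $1$, the hypothesis only gives $\delta(G)\geq (c+\gamma)n$ with $c=1-1/\xi(H)$ possibly very small; take for instance $G$ to be a disjoint union of $\lfloor 1/(c+\gamma)\rfloor$ cliques of equal size. This $G$ satisfies the degree hypothesis, yet for vertices $u,v$ in different cliques there is no path of length three alternating between $N(u)$ and $N(v)$, so there are \emph{zero} type-$1$ $Sub(H)$-absorbers for $\{u,v\}$. Hence $F$ is itself a disjoint union of cliques with $\delta(F)\approx (c+\gamma)n$, far from $(1-\eta)n$. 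You correctly flag this step as the ``main obstacle'' but the supersaturation argument you outline cannot work: no amount of care with Lemma~\ref{lem:fix-embedding} will produce embeddings that do not exist.

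The paper's proof circumvents this by not asking $F$ to be nearly complete. It applies the regularity lemma, takes a small dominating set $D=\{X_1,\dots,X_\ell\}$ in the reduced graph (via Lemma~\ref{lem:bounded-domination}), and shows only that $F$ contains the complete bipartite graphs on $X'_i\times P'_i$, where $P'_i$ is the set of (cleaned) vertices in clusters dominated by $X_i$. The absorber $A$ from Lemma~\ref{lem:type1-absorber} can then absorb any small $U$ with a perfect matching in $F$, but a generic leftover set $J$ need not admit such a matching. To fix this the paper reserves, in each regular pair $(X_i,Y_i)$, a copy $Q_i$ of $K_{(2h'+1)C,(2h'+1)C}$; after the almost-cover step (Lemma~\ref{lem:cover-almost}), it moves carefully chosen vertices out of the $Q_i$ to balance $|J\cap X'_i|$ against $|J\cap P'_i|$, using Observation~\ref{obs:hat-H-reservoir} and the hypothesis $hcf_\xi(H)=1$ to ensure each $Q_i$ minus the removed vertices still tiles perfectly. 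This balancing, not a global near-completeness of $F$, is what makes the absorption go through. Your parity-toggling idea at the end is in a similar spirit but is not enough on its own: you also need the leftover vertices to be matchable in $F$, and for that you need the regularity/domination/reservoir machinery.
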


\begin{lemma}\label{lem:lemma-for-main2}
    Let $H$ be an $h$-vertex graph with $\hcf_{\xi}(H) = 2$. Let $0 < \frac{1}{n} \ll \gamma, \frac{1}{h} \leq 1$. Let $G$ be an $n$-vertex graph, where $n$ is even. Then the following holds: If $\delta(G) \geq \left(\max\{\frac{1}{3}, 1 - \frac{1}{\xi(H)} \} + \gamma \right)n$, then $G$ has a perfect $H$-subdivision tiling.
\end{lemma}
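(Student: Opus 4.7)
\begin{proofsketch}
The plan is to apply the absorption method: construct a small absorbing set $A \subseteq V(G)$ with $|A| = o(n)$ such that $G[A]$ admits a perfect $H$-subdivision tiling and $A$ is a $Sub(H)$-absorber for every sufficiently small residual set $U \subseteq V(G) \setminus A$ of the correct parity, then apply \Cref{thm:chritical-constant-missing} to $G - A$ to find an $H^*$-tiling of $G - A$ missing only a constant $C = C(H)$ vertices, and finally absorb those leftover vertices using $A$. The hypothesis of \Cref{thm:chritical-constant-missing} is satisfied because \Cref{obs:xi-chicr} gives $\chi_{cr}(H^*) \leq \xi(H) \leq \xi^*(H) = \max\{\tfrac{3}{2}, \xi(H)\}$, so the minimum degree assumption $\delta(G) \geq \left(\max\{\tfrac{1}{3}, 1 - \tfrac{1}{\xi(H)}\} + \gamma\right)n = \left(1 - \tfrac{1}{\xi^*(H)} + \gamma\right)n$ together with $|A| = o(n)$ yields $\delta(G - A) \geq \left(1 - \tfrac{1}{\chi_{cr}(H^*)} + \tfrac{\gamma}{2}\right) v(G - A)$.

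The central task is building $A$. Apply \Cref{lem:regularity} with parameters $\varepsilon \ll d \ll \gamma$ to obtain a reduced graph $R$ with $\delta(R) \geq \left(\max\{\tfrac{1}{3}, 1 - \tfrac{1}{\xi(H)}\} + \tfrac{\gamma}{2}\right) v(R)$ by \Cref{lem:reduced-minimum-degree}, and split into cases based on the non-bipartite structure of $R$. When $R$ contains a triangle on clusters $V_i V_j V_k$, for a positive fraction of vertices $v$ in these clusters regularity produces many triangles through $v$ with the other two vertices in $V_j$ and $V_k$, and \Cref{lem:Kab-supersaturation,lem:fix-embedding} extend each such triangle to a copy of $\hat{T}^2_H$ by embedding the bipartite body $T^2_H = H^1$ in suitable regular pairs elsewhere, giving $\Omega(n^{v(T^2_H)})$ type-$2$ $Sub(H)$-absorbers for each such $v$. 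Letting $B$ be this set of vertices, a parallel supersaturation argument counting embeddings of $\hat{T}^1_H$ and $\hat{S}_H$ through any $v \in V(G) \setminus B$ shows that $v$ has degree at least $\tfrac{\eta}{3} n$ into $B$ in either the $d$-absorbing graph $F_1$ or the $(d, B)$-exchanging graph $F_2$, so \Cref{lem:type2-absorber} with $j = 2$ outputs the required $A$. If $R$ is triangle-free but contains a $C_5$, the same scheme goes through with $\hat{T}^3_H$ and \Cref{lem:type2-absorber} with $j = 3$. Finally, if $R$ is both triangle-free and $C_5$-free, then the high minimum degree of $R$ forces $R$ to be (essentially) bipartite, so $G$ is close to bipartite with two sides of equal parity (since $n$ is even); here only type-$1$ absorbers are needed, and \Cref{lem:type1-absorber} suffices by pairing residual vertices across the bipartition, which is possible because $hcf_{\xi}(H) = 2$ divides the imbalance of every bipartite $F \in Sub(H)$.

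The main obstacle is the borderline regime where $R$ contains only a small amount of non-bipartite structure (a few triangles or $C_5$'s), so that the set $B$ falls below the threshold $\eta n$ required by \Cref{lem:type2-absorber}. Handling this requires coupling type-$2$ or type-$3$ absorbers concentrated on the small non-bipartite part with type-$1$ absorbers spanning the bipartite bulk, using $Sub(H)$-exchangers (\Cref{def:exchanger}) to shuttle absorption capacity between the two regions while preserving the parity condition supplied by the evenness of $n$ together with $hcf_{\xi}(H) = 2$. Arranging the absorber so that, after the almost-perfect $H^*$-tiling step, the residual set is both small and of the correct parity to be absorbable by $A$ is the principal technical hurdle of the proof.
\end{proofsketch}
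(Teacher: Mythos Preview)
Your overall strategy matches the paper's: regularity, case-split on whether $R$ contains a short odd cycle, type-$2$/type-$3$ absorbers via \Cref{lem:type2-absorber} when it does, type-$1$ absorbers via \Cref{lem:type1-absorber} when it does not, then \Cref{lem:cover-almost} and absorb.

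The ``main obstacle'' you identify, however, does not exist. A single $C_3$ or $C_5$ in $R$ already involves clusters of size $\Theta(n/t)$; after trimming via \Cref{lem:super-regular-delete} these yield a set $B$ of size at least $(1-2\varepsilon)n/T \geq d_1 n$, and this $d_1$ is the $\eta$ one feeds into \Cref{lem:type2-absorber}. So there is no borderline regime and no need to couple the two absorber types: the case split is clean. In the $\{C_3,C_5\}$-free case the paper obtains something stronger than ``close to bipartite'': because each $G_0[V_i]$ is edgeless and the edges of $G_1 = G_0 - V_0$ lie only in regular pairs, $G_1$ itself is $\{C_3,C_5\}$-free, and Andr\'asfai's theorem (any graph with $\delta > \tfrac{2}{7}n$ containing no odd cycle of length at most $5$ is bipartite) makes $G_1$ genuinely bipartite with parts $X,Y$. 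The actual parity control is then carried out by (i) taking $|A|$ even (built into \Cref{lem:type1-absorber}) and $|W|$ even (built into \Cref{lem:cover-almost} when $hcf_\xi(H)=2$), so the leftover $J$ has $|J_X|-|J_Y|$ even, and (ii) setting aside in advance a reservoir $Q \cong K_{(2h'+2)C,\,(2h'+2)C}$ inside $G_1[X,Y]$ from which one peels $ch'$ vertices on the $X$-side and $c(h'+2)$ on the $Y$-side (where $2c = |J_X|-|J_Y|$) to balance $J$ in the absorbing graph $F$, while $Q$ minus the peeled vertices retains a perfect $\hat{H}$-tiling by \Cref{obs:hat-H-reservoir}. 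These two ingredients---Andr\'asfai's theorem and the bipartite reservoir---are what your sketch is missing; the hybrid coupling you propose is unnecessary.
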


The proofs of \Cref{lem:lemma-for-main1,lem:lemma-for-main2} will be provided in \Cref{subsec:proof-main1,subsec:proof-main2}, respectively. The proofs crucially use the absorption method. Our proof strategy is as follows: Let $H$ be a graph and $G$ be a host graph equipped with a minimum degree condition as in the statements of the above lemmas, in which we want to find a perfect $H$-subdivision tiling. Then, together with \Cref{obs:xi-chicr} and \Cref{thm:chritical-constant-missing}, we can find an almost perfect $H$-subdivision tiling in $G$. Thus, we mainly focus on absorbing the remaining vertices by constructing suitable absorbers for all cases. A detailed proof sketch is provided below.

\begin{description}
    \item[Step 1: Preprocessing.] Apply the regularity lemma with properly chosen parameters on $G$. After that, collect bad vertices and construct absorbing graphs on good vertices.
    
    \item[Step 2: Place the absorber.] Place an efficient $\mathrm{Sub}(H)$-absorber by using the abundant properties of $\varepsilon$-regular pairs.
    
    \item[Step 3: Cover almost all vertices.] Find a set $W \subseteq V(G)$ outside of the $\mathrm{Sub}(H)$-absorber obtained from Step 2 such that $G[W]$ has a perfect $H$-subdivision tiling, $W$ contains all bad vertices, and $W$ covers all but at most a constant number of vertices of $G$ that are not in the $\mathrm{Sub}(H)$-absorber.
    
    \item[Step 4: Absorb the uncovered vertices.] Absorb all the vertices that are not covered in Step 3 by using the $\mathrm{Sub}(H)$-absorber obtained from Step 2.
\end{description}

In the above description, we omitted many details. To achieve Step 3, the following lemma would be useful.

\begin{lemma}\label{lem:cover-almost}
    Let $H$ be an $h$-vertex graph. Let $0 < \frac{1}{n} \ll \alpha, \frac{1}{h} \leq 1$, and $0 < \frac{1}{C} \ll \frac{1}{h} \leq 1$. Let $G$ be an $n$-vertex graph with minimum degree at least $\left(1 - \frac{1}{\xi(H)} + \alpha \right)n$. Let $X$ be a subset of $V(G)$ with size at most $\frac{\alpha}{2v(H^*)}n$. Then there is a set $W \subseteq V(G)$ with size at least $n - C$ such that $X \subseteq W$ and $G[W]$ have a perfect $H$-subdivision tiling. Moreover, if $\hcf_{\xi}(H) = 2$, we can get such a set $W$ whose size is even.
\end{lemma}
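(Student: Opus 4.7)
The plan is to build $W$ in two stages: first cover $X$ by a small collection of vertex-disjoint copies of $H^*$, and then apply \Cref{thm:chritical-constant-missing} to the leftover graph with the bipartite subdivision $H^*$. Recall that $H^* \in Sub(H)$ is bipartite with $\chi_{cr}(H^*) \leq \xi(H)$ by \Cref{obs:xi-chicr}, and any $H^*$-tiling is automatically an $H$-subdivision tiling, so this strategy converts the given degree hypothesis into one meeting Shokoufandeh--Zhao.

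For Step~1, I process the vertices of $X$ one at a time. Suppose some vertices $X' \subsetneq X$ have already been absorbed into vertex-disjoint copies of $H^*$, using at most $|X'|\,v(H^*) \leq \frac{\alpha}{2}n$ vertices in total. Pick any $x \in X\setminus X'$ and let $G_1$ denote $G$ with the already-used vertices removed. Then $\delta(G_1) \geq (1 - 1/\xi(H) + \alpha/2)n = \Omega(n)$, so $G_1$ has $\Omega(n^2)$ edges; since $H^*$ is bipartite we have $\pi(H^*)=0$, and \Cref{thm:supersaturation} yields $\Omega(n^{v(H^*)})$ copies of $H^*$ in $G_1$. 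A simple averaging (using \Cref{obs:delet-small-vertices} to discount copies hitting the already-used vertices) produces $\Omega(n^{v(H^*)-1})$ such copies through $x$; pick one and add it to the family $\calT_0$. Iterating covers all of $X$ using at most $|X|\,v(H^*) \leq \frac{\alpha}{2}n$ vertices in total.

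For Step~2, let $G' = G - V(\calT_0)$, so that $v(G') \geq (1-\frac{\alpha}{2})n$ and
\[
\delta(G') \;\geq\; \left(1-\frac{1}{\xi(H)}+\frac{\alpha}{2}\right)n \;\geq\; \left(1-\frac{1}{\chi_{cr}(H^*)}\right)v(G'),
\]
where the last inequality uses $\chi_{cr}(H^*) \leq \xi(H)$ from \Cref{obs:xi-chicr} together with $v(G')\leq n$. Applying \Cref{thm:chritical-constant-missing} to $G'$ with $H^*$ yields a constant $C=C(H^*)$, depending only on $h$, and an $H^*$-tiling $\calT_1$ covering all but at most $C$ vertices of $G'$. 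Setting $W = V(\calT_0) \cup V(\calT_1)$ gives $X\subseteq W$, $|W|\geq n-C$, and $G[W]$ has a perfect $H^*$-tiling, hence a perfect $H$-subdivision tiling.

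For the moreover clause, assume $hcf_{\xi}(H)=2$. Writing
\[
v(H^*) = \bigl(|X_H|+e(H[V(H)\setminus X_H])\bigr) + \bigl(|V(H)\setminus X_H|+e(H[X_H])\bigr)
\]
and noting that the difference of the two summands lies in $\calC(H)$ and is therefore divisible by $2$, the two summands share parity, so $v(H^*)$ is even. Since $|W|$ is a multiple of $v(H^*)$, it is even. The main obstacle in the argument above is Step~1: verifying that the greedy procedure always finds an $H^*$-copy through the currently processed vertex of $X$. This reduces to a supersaturation-plus-averaging computation, but must be executed carefully so that the copies placed along the way remain pairwise disjoint and the remaining graph keeps enough edge density to continue.
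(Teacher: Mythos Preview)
Your two-stage plan (cover $X$ with disjoint copies of $H^*$, then invoke Shokoufandeh--Zhao on the rest) is exactly the paper's strategy, and your treatment of Step~2 and of the parity clause is correct and matches the paper almost verbatim.

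The gap is in Step~1. Supersaturation in $G_1$ gives $\Omega(n^{v(H^*)})$ copies of $H^*$, but ``averaging'' only tells you that \emph{some} vertex lies in $\Omega(n^{v(H^*)-1})$ copies; it says nothing about the particular vertex $x$ you are trying to absorb. \Cref{obs:delet-small-vertices} does not help either: it lets you discard copies meeting a small forbidden set, not force copies through a prescribed vertex. As stated, your Step~1 could fail for a vertex $x$ that happens to lie in no copy of $H^*$ at all, and nothing in the averaging rules this out.

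The paper closes this gap by invoking \Cref{lem:fix-embedding}. The point is that you must use the structure of $N_{G_1}(x)$: take a vertex $w$ of $H^*$, apply \Cref{lem:fix-embedding} to the bipartite graph $H^*-w$ with $B=N_{G_1}(x)$ (which has size $\Omega(n)$ by the minimum-degree hypothesis), and obtain $\Omega(n^{v(H^*)-1})$ embeddings of $H^*-w$ sending the side containing $N_{H^*}(w)$ into $N_{G_1}(x)$. Each such embedding extends to a copy of $H^*$ through $x$, and since there are far more than $|X|\cdot v(H^*)\cdot n^{v(H^*)-2}$ of them, one avoids the already-used vertices and the remaining vertices of $X$. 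Replace your averaging step with this argument and the proof goes through.
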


\begin{proof}
    We will first find vertex-disjoint copies of $ H^* $ covering all vertices of $ X $. Let $ X = \{v_1, \dots, v_t\} $ where $ t \leq \frac{\alpha}{2v(H^*)}n $. Assume for $ i < t $, we have $ \calF_i = \{F_1, \dots, F_i\} $, where $ \calF_i $ is a collection of vertex-disjoint copies of $ H^* $ and $ v_j \in V(F_j) $ for each $ j \in [i] $, and $ G_i = G - \bigcup_{j \in [i]} V(F_i) - X $.

    We note that $ \left| \bigcup_{j \in [i]} V(F_i) \cup X \right| \leq \frac{\alpha}{2}n $ holds as $ i < t $. Thus, $ \delta(G_i + v_{i+1}) \geq \left( 1 - \frac{1}{\xi(H^*)} + \frac{\alpha}{2} \right)n $. Let $ Y $ be the subset of the neighborhoods of $ v_{i+1} $ in $ G_i + v_{i+1} $ of size $ \left( 1 - \frac{1}{\xi(H^*)} + \frac{\alpha}{2} \right)n $. Let $ A $ and $ B $ be the bipartition of $ H^* $ and $ u $ be a vertex in $ A $. We will embed $ u $ to $ v_{i+1} $ first and then embed other vertices of $ H^* - u $ while all the vertices of $ B $ are embedded in $ Y $. By the minimum degree condition on $ G_{i+1} $, \Cref{lem:fix-embedding} enables us to find such embedding, we obtain a new copy of $ H^* $, say $ F_{i+1} $ containing $ v_{i+1} $. As it is disjoint with $ \bigcup_{j \leq i} V(F_j) \cup X $, we may add it to enlarge $ \calF_i $ to $ \calF_{i+1} $. We iterate this process until we get $ \calF_t $. Let $ F = \bigcup_{i \in \calF_t} V(F_i) $. Then $ X \subseteq F $ and $ |F| = v(H^*) t \leq \frac{\alpha}{2}n $ holds.

    Now, let $ G' = G - F $. Since $ |F| \leq \frac{\alpha}{2}n $, the minimum degree of $ G' $ is at least $ \left( 1 - \frac{1}{\xi(H)} + \frac{\alpha}{2} \right)n $. Thus, by \Cref{thm:chritical-constant-missing} and \Cref{obs:xi-chicr}, there is a set $ F' \subseteq V(G') $ such that $ G'[F'] $ has a perfect $ H^* $-tiling and $ v(G') - |F'| \leq C $. Let $ W = F \cup F' $. Then $ W $ is the desired set.

    When $ \hcf_{\xi}(H) = 2 $, since the order of $ v(H^*) $ is even, the size of $ W $ is also even. This completes the proof.
\end{proof}

In the proof of \Cref{lem:lemma-for-main1}, we will use the concept of dominating sets to obtain an efficient $\mathrm{Sub}(H)$-absorber. Let $G$ be a graph. We say $D$ is a \emph{dominating set} of $G$ if every vertex $v \in V(G)$ is either contained in $D$ or has a neighbor in $D$. The domination number of $G$ is defined as the minimum size of a dominating set of $G$.

The next lemma on domination number, proven by Arnautov~\cite{arnautov1974estimation}, and independently by Payan~\cite{payan1975nombre}, will be used in the proof of \Cref{lem:lemma-for-main1}.

\begin{lemma}[Arnautov~\cite{arnautov1974estimation}, Payan~\cite{payan1975nombre}]\label{lem:bounded-domination}
    Let $G$ be an $n$-vertex graph with minimum degree $\delta$. Then the domination number of $G$ is bounded above by $\frac{1 + \ln (\delta + 1)}{\delta + 1}n$.
\end{lemma}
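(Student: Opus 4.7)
The plan is to prove this via the standard probabilistic/greedy argument, attributed to Arnautov and Payan, which picks vertices at random with the right density and patches up the uncovered remainder.

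First I would set $p := \frac{\ln(\delta+1)}{\delta+1}$ and independently include each vertex of $V(G)$ in a random set $A$ with probability $p$. Then $\mathbb{E}[|A|] = pn$. Let $B$ denote the set of vertices that are neither in $A$ nor have a neighbour in $A$; then $A \cup B$ is automatically a dominating set, so it suffices to bound $\mathbb{E}[|A| + |B|]$.

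For any fixed vertex $v$, we have $v \in B$ only if $v$ and all of its (at least $\delta$) neighbours lie outside $A$, so
\begin{equation*}
    \Pr[v \in B] \;\leq\; (1-p)^{\delta+1} \;\leq\; e^{-p(\delta+1)} \;=\; e^{-\ln(\delta+1)} \;=\; \frac{1}{\delta+1}.
\end{equation*}
By linearity of expectation, $\mathbb{E}[|B|] \leq n/(\delta+1)$, hence
\begin{equation*}
    \mathbb{E}[|A \cup B|] \;\leq\; pn + \frac{n}{\delta+1} \;=\; \frac{1+\ln(\delta+1)}{\delta+1}\, n.
\end{equation*}
Therefore some realisation of $A$ yields a dominating set $A \cup B$ of size at most the claimed bound, proving the lemma.

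The argument is essentially a one-line calculation, so there is no real obstacle; the only subtlety is making sure the inequality $(1-p)^{\delta+1} \leq e^{-p(\delta+1)}$ is invoked correctly and that the choice $p = \ln(\delta+1)/(\delta+1)$ optimally balances the two terms $pn$ and $n/(\delta+1)$. Since the statement is a classical theorem, the paper almost certainly just quotes it without a proof, and indeed one could alternatively derive it by a greedy deletion argument (repeatedly picking a vertex of maximum degree and removing its closed neighbourhood), but the probabilistic proof above is the most transparent.
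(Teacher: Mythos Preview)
Your proof is correct: this is exactly the classical probabilistic argument (random selection with density $p=\ln(\delta+1)/(\delta+1)$, then add the undominated leftovers), and every step is sound, including the bound $(1-p)^{\delta+1}\le e^{-p(\delta+1)}$ and the fact that $A$ and $B$ are disjoint so $|A\cup B|=|A|+|B|$. As you anticipated, the paper does not give its own proof at all---it merely states the lemma and cites Arnautov and Payan---so there is nothing further to compare.
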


Later, Alon~\cite{alon1990transversal} proved that \Cref{lem:bounded-domination} is asymptotically best possible.

In order to use type-$1$ $\mathrm{Sub}(H)$-absorbers, we need to find a perfect matching in a $d$-absorbing graph for a suitable constant $d$. To achieve this, it would be useful to define a graph as follows.

\begin{definition}\label{def:hatH}
    For a given graph $H$, consider a family of disjoint unions of bipartite subdivisions of $H$. Among those, choose a graph $\hat{H}$ with bipartition $(\hat{A}, \hat{B})$ with the smallest possible $v(\hat{H})$ such that $||\hat{A}| - |\hat{B}|| = \hcf_{\xi}(H)$. If there are multiple choices of $\hat{H}$, $\hat{A}$, and $\hat{B}$, we fix one choice.  
\end{definition}

We note that for every $H$, we can prove that the bipartite graph $\hat{H}$ exists in the following way. By the definition of $\hcf_{\xi}(H)$, there exist bipartite graphs $H_1, \dots, H_m$ with bipartitions $(A_1, B_1), \dots, (A_m, B_m)$, respectively, that are bipartite subdivisions of $H$ such that they satisfy the following: there are positive integers $c_1, \dots, c_m$ that satisfy the inequality $\sum_{i \in [m]} c_i(|A_i| - |B_i|) = \hcf_{\xi}(H)$. By taking a disjoint union of $c_i$ copies of $H_i$ for each $i \in [m]$, we obtain a bipartite graph that is a disjoint union of bipartite subdivisions of $H$ and the difference between the bipartitions is $\hcf_{\xi}(H)$. Among such bipartite graphs, we can take our desired graph $\hat{H}$.

We use $\hat{H}$ and the bipartition $(\hat{A}, \hat{B})$ to find perfect matchings in a $d$-absorbing graph for using type-$1$ $\mathrm{Sub}(H)$-absorbers. The following observation shows that complete bipartite graphs with appropriate sizes act like reservoirs for $\hat{H}$.

\begin{observation}\label{obs:hat-H-reservoir}
    Let $H$ be a graph with $\hcf_{\xi}(H) = t$ and assume $\hat{H}$ has a bipartition with sizes $h'$ and $h' + t$. Let $b$ be a positive integer. Then for any non-negative integer $a \leq b$, the complete bipartite graph with bipartition sizes $(2h' + t)b - h'a$ and $(2h' + t)b - (h' + t)a$ has a perfect $\hat{H}$-tiling.
\end{observation}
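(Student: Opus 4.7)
The plan is to count vertices and orientations. Since $\hat{H}$ is bipartite with sides of sizes $h'$ and $h'+t$, the total vertex count of $\hat{H}$ is $2h'+t$. For the target host graph with parts $X,Y$ of sizes $(2h'+t)b - h'a$ and $(2h'+t)b - (h'+t)a$, the combined order is $(2h'+t)(2b-a)$, so any perfect $\hat{H}$-tiling must consist of exactly $2b-a$ vertex-disjoint copies of $\hat{H}$.

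Next, I would classify each copy by orientation: say $p$ copies put the small side (of size $h'$) into $X$ and the large side (of size $h'+t$) into $Y$, while the remaining $q$ copies do the reverse. Writing down the two linear equations for $|X|$ and $|Y|$,
\begin{align*}
p\,h' + q(h'+t) &= (2h'+t)b - h'a,\\
p(h'+t) + q\,h' &= (2h'+t)b - (h'+t)a,
\end{align*}
(together with $p+q = 2b-a$), a short calculation forces $q = b$ and hence $p = b-a$. Since $0 \leq a \leq b$, both are non-negative.

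Finally, because $K_{|X|,|Y|}$ is complete bipartite, any partition of $X$ into $q = b$ blocks of size $h'+t$ and $p = b-a$ blocks of size $h'$, together with a matching partition of $Y$ into $q$ blocks of size $h'$ and $p$ blocks of size $h'+t$, automatically accommodates the $2b-a$ copies of $\hat{H}$ in the two prescribed orientations. There is no real obstacle in this argument — the only thing to verify is the feasibility of the system, and both the integrality and non-negativity of $(p,q)$ follow from $a \leq b$ directly; the main subtlety is that one must track both orientations of $\hat{H}$ rather than just one, which is exactly why the difference $t = hcf_\xi(H)$ appears with the correct sign in both equations.
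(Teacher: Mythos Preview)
Your argument is correct and essentially the same as the paper's. The paper skips the linear system and simply writes the decomposition directly: setting $x = (2h'+t)b - h'a$ and $y = (2h'+t)b - (h'+t)a$, it observes $x = h'(b-a) + (h'+t)b$ and $y = (h'+t)(b-a) + h'b$, which is exactly your solution $p = b-a$, $q = b$; you just arrived at these values by solving rather than by inspection.
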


\begin{proof}
    Let $x = (2h' + t)b - h'a$ and $y = (2h' + t)b - (h' + t)a$.
    Since $x = h' (b - a) + (h' + t)b$ and $y = (h' + t)(b - a) + h' b$, the complete bipartite graph with bipartition sizes $x$ and $y$ has $2b - a$ vertex-disjoint copies of $K_{h', h' + t}$. Thus, $K_{x, y}$ has a perfect $\hat{H}$-tiling.
\end{proof}

%---------------------------------------------------------------
\subsection{Proof of \Cref{thm:main}}\label{subsec:proof-main1}
In this subsection, we will prove \Cref{lem:lemma-for-main1}, which will complete the proof of \Cref{thm:main}.

\begin{proof}[Proof of \Cref{lem:lemma-for-main1}]
Let $H$ be an $h$-vertex graph with $\hcf_{\xi}(H) = 1$. Let $\hat{H}$ be the bipartite graph as in \Cref{def:hatH} with a bipartition $(\hat{A}, \hat{B})$ with sizes $h'$ and $h'+1$. Note that $h'$ is bounded above by a constant that depends only on $h$. We fix positive constants as follows:
$$0 < \frac{1}{n} \ll \beta \ll \alpha \ll d_2 \ll \frac{1}{T} \ll \frac{1}{t_0} \ll \varepsilon \ll d_1 \ll d \ll \frac{1}{h'}, \frac{1}{C} \ll \gamma, \frac{1}{h} \leq 1.$$
Let $n \geq n_0$ and $G$ be an $n$-vertex graph with minimum degree at least $\left(1 - \frac{1}{\xi(H)} + \gamma \right)n$. We apply the regularity lemma to $G$ with parameters $\varepsilon$, $d$, and $t_0$ playing the roles of $\varepsilon$, $d$, and $t_0$, respectively. Let $V_0, V_1, \dots, V_t$ be an $\varepsilon$-regular partition of $G$ and $G_0$ be a spanning subgraph of $G$ that satisfies the following conditions:

\begin{itemize}
    \item[(R1)] $t_0 \leq t \leq T_0$,
    \item[(R2)] $|V_0| \leq \varepsilon v(G)$,
    \item[(R3)] $|V_1| = \cdots = |V_t|$,
    \item[(R4)] $G_0[V_i]$ has no edges for each $i \in [t]$,
    \item[(R5)] $d_{G_0}(v) \geq d_G(v) - (d + \varepsilon)v(G)$ for all $v \in V(G)$,
    \item[(R6)] for all $1 \leq i < j \leq t$, either the pair $(V_i, V_j)$ is an $(\varepsilon, d^+)$-regular pair or $G_0[V_i, V_j]$ has no edges.
\end{itemize}

By (R5), the minimum degree of $G_0$ is at least $\left(1 - \frac{1}{\xi(H)} + \frac{\gamma}{2} \right)n$. Let $R$ be the $(\varepsilon, d)$-reduced graph on $\{V_1, \dots, V_t\}$. Then, by \Cref{lem:reduced-minimum-degree}, $\delta(R) \geq \left(1 - \frac{1}{\xi(H)} + \frac{\gamma}{2} \right)t$. By \Cref{lem:bounded-domination}, there is a dominating set $D \subseteq V(R)$ such that
$$|D| \leq \frac{1 + \ln \left(\left(1 - \frac{1}{\xi(H)} + \frac{\gamma}{2} \right)t + 1\right)}{\left(1 - \frac{1}{\xi(H)} + \frac{\gamma}{2} \right)t + 1}t \leq \frac{\varepsilon}{2}t.$$
We can pick a set $D' \subseteq V(R) \setminus D$ greedily such that $R[D, D']$ has a perfect matching $M$. Let
$$D = \{X_1, \dots, X_{\ell}\}, \quad D' = \{Y_1, \dots, Y_{\ell}\}, \quad \text{and} \quad E(M) = \{X_1Y_1, \dots, X_{\ell}Y_{\ell}\}.$$
We note that $v(M) = 2\ell \leq \varepsilon t$. Since $D$ is a dominating set of $R$, there is a partition $\{\mathcal{V}_1, \dots, \mathcal{V}_{\ell}\}$ for $V(R) \setminus (D \cup D')$ such that for every $V_{i,j} \in \mathcal{V}_i$, the edge $X_i V_{i,j} \in E(R)$. Let $P_i \defeq \bigcup_{V_{i,j} \in \mathcal{V}_i} V_{i,j} \cup Y_i$ for each $i \in [\ell]$.

For each $V_{i,j} \in \mathcal{V}_i$, where $i \in [\ell]$, let $U_{i,j} = \{u \in V_{i,j} : d_{G_0}(u; X_i) < (d - \varepsilon)|X_i|\}$. By \Cref{lem:super-regular-delete}, the size of $U_{i,j}$ is at most $\varepsilon |V_{i,j}| \leq \frac{\varepsilon n}{t}$. Let $V'_{i,j} = V_{i,j} \setminus U_{i,j}$. 
Similarly, for each $i \in [\ell]$, let $U_{i,D'} = \{u \in Y_i : d_{G_0}(u; X_i) < (d - \varepsilon)|X_i|\}$ and let $Y'_i = Y_i \setminus U_{i,D'}$. Then $|U_{i,D'}| \leq \varepsilon |Y_i| \leq \frac{\varepsilon n}{t}$. Finally, for each $i \in [\ell]$, let $U_{i,D} = \{u \in X_i : d_{G_0}(u; Y_i) < (d - \varepsilon)|Y_i|\}$ and let $X'_i = X_i \setminus U_{i,D}$. Then $|U_{i,D}| \leq \varepsilon |X_i| \leq \frac{\varepsilon n}{t}$.

We collect all the bad vertices, say $Z = V_0 \cup \bigcup_{i, j} U_{i, j} \cup \bigcup_{i} (U_{i, D} \cup U_{i, D'})$. Then $|Z| \leq \ve n + t \frac{\ve n}{t} \leq 2\ve n$. We obtain a graph $G_1 = G_0 - Z$. Then $v(G_1) \geq (1 - 2\ve)n$. We get a set $P'_i = P_i \setminus Z$ for each $i \in [\ell]$.
Then for each $i \in [\ell]$, for every $v \in P'_i$, the degree $d_{G_0}(v; X'_i) \geq (d - \ve)|X_i| - \ve|X_i| \geq (d - 2\ve)|X_i| \geq d_1 |X_i|$. For the same reason, for each $i \in [\ell]$, for every $u \in X'_i$, the degree $d_{G_0}(u; Y'_i) \geq d_1 |Y_i|$.

Let $u \in X'_i$ and $v \in P'_i$, where $i \in [\ell]$. We write $A_u = N_{G_1}(u) \cap Y'_i$ and $B_v = N_{G_1}(v) \cap X'_i$. We observe that the pair $(X_i, Y_i)$ is an $\ve$-regular pair, and the sizes $|A_u| \geq d_1 |Y_i| > \ve |Y_i|$ and $|B_v| \geq d_1 |X_i| > \ve |X_i|$. By the definition of $\ve$-regular pair, the number of edges $e_{G_0}(A_u, B_v) \geq (d - \ve)|A_u||B_v| \geq d_1^3 |X_i||Y_i| \geq \frac{d_1^4}{T^2} n^2$. By \Cref{lem:Kab-supersaturation}, there are at least $d_2(v(T^1_H)!) n^{v(T^1_H)}$ copies of $T^1_H$ in $G_1[A_u, B_v]$. This means there are at least $d_2 n^{v(T^1_H)}$ distinct type-1 $\mathrm{Sub}(H)$-absorbers for $\{u, v\}$. Let $F$ be a $d_2$-absorbing graph of $G_1$. Then the following holds:

\begin{equation}\label{eq:F1}
\text{For each $i \in [\ell]$, every pair $(u, v) \in X'_i \times P'_i$, $uv$ is an edge of $F$.}
\end{equation}

By \Cref{lem:type1-absorber}, there is a set $A \subseteq V(G_1)$ with size at most $\alpha n$ such that for all subsets $U$ of $V(G_1) \setminus A$ with size at most $\beta n$ and $F[U]$ has a perfect matching, then $A$ is a $\mathrm{Sub}(H)$-absorber for $U$. Let us fix such a $\mathrm{Sub}(H)$-absorber $A$. For each $i \in [\ell]$, let $X''_i = X'_i \setminus A$ and $Y''_i = Y'_i \setminus A$. Then we have $|X''_i| \geq |X'_i| - \alpha n \geq (1 - 2\ve) |X_i|$ and, for the same reason, $|Y''_i| \geq (1 - 2\ve) |Y_i|$ for every $i \in [\ell]$.

We now claim that for each $i \in [\ell]$, there is a copy $Q_i$ of a complete balanced bipartite graph $K_{(2h'+1)C, (2h'+1)C}$. We note that the sizes $|X''_i| \geq \ve |X_i|$ and $|Y''_i| \geq \ve |Y_i|$. Since the pair $(X_i, Y_i)$ is an $\ve$-regular pair, the number of edges in $G_1[X''_i, Y''_i]$ is at least $(d - \ve) |X''_i||Y''_i|$. Thus, by the Erdős-Stone-Simonovits theorem, there is at least one copy of $K_{(2h'+1)C, (2h'+1)C}$ in $G_1[X''_i, Y''_i]$. Thus, for each $i \in [\ell]$, there exists such a bipartite graph $Q_i$. Let $Q \defeq \bigcup_{i \in [\ell]} V(Q_i)$. We note that by \Cref{obs:hat-H-reservoir}, for each $i \in [\ell]$, the graph $Q_i$ has a perfect $\hat{H}$-tiling. This implies $G[Q]$ has a perfect $H$-subdivision tiling.

Note that $|Q| = 2(2h'+1)C\ell \leq \ve n$. Let $G_2 = G_0 - (A \cup Q)$. Then we have $v(G_2) \geq (1 - \alpha - \ve)n$ and $\delta(G_2) \geq \left(1 - \frac{1}{\xi(H)} + \frac{\gamma}{2} - (\alpha + \ve)\right)n \geq \left(1 - \frac{1}{\xi(H)} + \frac{\gamma}{4}\right)n$. By \Cref{lem:cover-almost}, there is a set $W \subseteq V(G_2)$ such that $G[W]$ has a perfect $H$-subdivision tiling, the set $Z$ is contained in $W$, and $|V(G_2) \setminus W| \leq C$. Let $J = V(G_2) \setminus W$ with size at most $C$.

We observe that $V(G) = A \cup Q \cup W \cup J$, and $J \subseteq \left(\bigcup_{i \in [\ell]} P'_i \cup \bigcup_{i \in [\ell]} X'_i\right) \setminus (A \cup Q).$ We recall that $A$ is the $\mathrm{Sub}(H)$-absorber for a small vertex subset which has a perfect matching in $F.$
For each $i \in [\ell]$, let $J_i^1 = J \cap X'_i$ and $J_i^2 = J \cap P'_i.$ We denote by $J_i$ the union of $J_i^1$ and $J_i^2.$ Let $c_i$ be an integer for each $i \in [\ell]$ such that $c_i = |J_i^1| - |J_i^2|.$ We define $\sigma(c_i) = 1$ if $c_i > 0$; otherwise, $\sigma(c_i) = 0.$

For each $i \in [\ell]$, if $c_i = 0$, then let $S^1_i$ and $S^2_i$ be empty sets. Otherwise, let them be sets $S^1_i \subseteq (V(Q_i) \cap X'_i)$ with size $(h' + 1 - \sigma(c_i))c_i$ and $S^2_i \subseteq (V(Q_i) \cap Y'_i)$ with size $(h' + \sigma(c_i))c_i.$ By \Cref{obs:hat-H-reservoir}, the induced graph $G[Q_i \setminus (S^1_i \cup S^2_i)]$ has a perfect $H$-subdivision tiling for each $i \in [\ell].$
For each $i \in [\ell]$, since $|S^1_i| - |S^2_i| = -c_i$, the equality $|J^1_i \cup S^1_i| = |J^2_i \cup S^2_i|$ holds. We observe that $(J^1_i \cup S^1_i) \subseteq X'_i$ and $(J^2_i \cup S^2_i) \subseteq P'_i.$ Thus, by \eqref{eq:F1}, the induced graph $F[J_i \cup S^1_i \cup S^2_i]$ has a perfect matching and we have $|J_i \cup S^1_i \cup S^2_i| = |J_i| + |c_i|(2h' + 1).$
Let $S = \bigcup_{i \in [\ell]} S^1_i \cup S^2_i.$ Then, $Q \setminus S = \bigcup_{i \in [\ell]} (V(Q_i) \setminus (S^1_i \cup S^2_i))$, so $G[Q \setminus S]$ has a perfect $H$-subdivision tiling.

Since $J \cup S = \bigcup_{i \in [\ell]} (J_i \cup S^1_i \cup S^2_i)$, the induced graph $F[J \cup S]$ has a perfect matching. Moreover, $|J \cup S| = \sum_{i \in [\ell]} (J_i \cup S^1_i \cup S^2_i) \leq |J| + (2h' + 1) \sum_{i \in [\ell]} |c_i| \leq (2h' + 2)C \leq \beta n$ holds. Thus, the set $A$ is a $\mathrm{Sub}(H)$-absorber for $J \cup S.$ This means the induced graph $G[A \cup J \cup S]$ has a perfect $H$-subdivision tiling. Together with $G[W]$, $G[Q \setminus S]$, and $G[A \cup J \cup S]$, the graph $G$ has a perfect $H$-subdivision tiling. This completes the proof.
\end{proof}

%---------------------------------------------------------------
\subsection{Proof of \Cref{thm:hcf-2}}\label{subsec:proof-main2}
We now prove \Cref{lem:lemma-for-main2} to complete the proof of \Cref{thm:hcf-2}. Our purpose is to find a perfect $H$-subdivision tiling in a certain graph, where $\hcf_{\xi}(H) = 2.$

In this case, we cannot apply the same idea as the proof of \Cref{thm:main}. To prove \Cref{thm:hcf-2} with type-$1$ $\mathrm{Sub}(H)$-absorbers, we need to find a perfect matching in an absorbing graph, but since $\hcf_{\xi}(H) = 2$, parity issues make it difficult to obtain a perfect matching. If the host graph $G$ is close to bipartite, we can handle these parity difficulties, but if $G$ is far from bipartite, then we need to use different types of $\mathrm{Sub}(H)$-absorbers. Thus, we divide the cases depending on whether $G$ has many triangles or many $C_5$s. If it has many triangles, we will use type-$2$ $\mathrm{Sub}(H)$-absorbers, and if it has many $C_5$s, then we will use type-$3$ $\mathrm{Sub}(H)$-absorbers.

Below is the proof of \Cref{lem:lemma-for-main2}, which yields \Cref{thm:hcf-2}.

\begin{proof}[Proof of \Cref{lem:lemma-for-main2}]
Let $H$ be a graph on $h$ vertices with $\hcf_{\xi}(H) = 2$. Let $\hat{H}$ be the bipartite graph as in \Cref{def:hatH} with a bipartition $(\hat{A}, \hat{B})$ with sizes $h'$ and $h'+2$ for a positive integer $h'$. We note that $h'$ is bounded above by a constant that only depends on $h$. We fix positive constants as follows:
$$0 < \frac{1}{n}\ll \beta \ll \alpha \ll d_1 \ll \rho \ll \frac{1}{T} \ll \frac{1}{t_0} \ll \ve \ll d \ll \frac{1}{h'}, \frac{1}{C} \ll \gamma, \frac{1}{h} \leq 1.$$
Let $n$ be an even number and let $G$ be an $n$-vertex graph with a minimum degree of at least $\left(1 - \frac{1}{\xi^*(H)} + \gamma \right)n$. We apply the regularity lemma on $G$ with parameters $\ve$, $d$, and $t_0$. Let $V_0, V_1, \dots, V_t$ be an $\ve$-regular partition of $G$ and let $G_0$ be a spanning subgraph of $G$ which satisfies the following:

\begin{itemize}
    \item[(R1)] $t_0 \leq t \leq T_0,$
    \item[(R2)] $|V_0| \leq \ve v(G),$
    \item[(R3)] $|V_1| = \cdots = |V_t|,$
    \item[(R4)] $G_0[V_i]$ has no edges for each $i \in [t],$        
    \item[(R5)] $d_{G_0}(v) \geq d_G(v) - (d + \ve)v(G)$ for all $v \in V(G),$
    \item[(R6)] for all $1 \leq i < j \leq t,$ either the pair $(V_i, V_j)$ is an $(\ve, d+)$-regular pair or $G_0[A, B]$ has no edges.
\end{itemize}

By (R5), the minimum degree of $G_0$ is at least $\left(1 - \frac{1}{\xi^*(H)} + \frac{\gamma}{2} \right)n$. Let $G_1 = G_0 - V_0$, then the size $v(G_1) \geq (1-\ve)n$ and $\delta(G_1) \geq \delta(G_0) - \ve n \geq \left(\frac{1}{3} + \frac{\gamma}{4} \right)n$. Let $R$ be the $(\ve, d)$-reduced graph on $\{V_1, \dots, V_t\}$. Then by \Cref{lem:reduced-minimum-degree}, the minimum degree of $R$ is at least $\left(1 - \frac{1}{\xi^*(H)} + \frac{\gamma}{2} \right)t$. We now divide into two cases depending on the structure of $R$.

\begin{Cases}
    \item $R$ contains $C_3$ or $C_5$.

    Let $j \in \{2, 3\}$. We choose $j$ as 2 if $R$ contains $C_3$. Otherwise, we choose $j$ as 3.
    We may assume $V_1, \dots, V_{2j-1}$ forms a $C_{2j-1}$ in $R$, where $V_i V_{i+1} \in E(R)$ for each $i \in [2j-1]$. Below, we write $V_{2j}$ to denote $V_1$. By \Cref{lem:super-regular-delete}, there exist sets $V'_1, \dots, V'_{2j-1}$ satisfying the following for each $i \in [2j-1]$:

    \begin{enumerate}
        \item[$\bullet$] $V'_i \subseteq V_i$,
        \item[$\bullet$] $|V_i \setminus V'_i| \leq 2\ve |V_i|$,
        \item[$\bullet$] for every $v \in V'_i$, the degrees $d_{G_0}(v; V'_{i-1}) \geq (d - 3\ve)|V_{i-1}|$ and  $d_{G_0}(v; V'_{i+1}) \geq (d - 3\ve)|V_{i+1}|$.
    \end{enumerate}

    Let 
    $$B = \{v \in V(G_1): \text{there are at least } d_1 n^{v(T^j_H)} \text{ distinct type-} j \ \mathrm{Sub}(H) \text{-absorbers for } v\}$$ 

    We now claim that $|B| \geq d_1 n$.

    \begin{claim}\label{clm:size-of-B}
        $|B| \geq d_1 n$.
    \end{claim}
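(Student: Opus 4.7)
The plan is to show that every vertex in $V'_1 \cup \cdots \cup V'_{2j-1}$ lies in $B$. Once this is established, the claim follows: each $|V'_i| \geq (1 - 2\ve)|V_i| \geq (1 - 3\ve)\, n/t$ and the hierarchy $d_1 \ll 1/T \leq 1/t$ give $|V'_1 \cup \cdots \cup V'_{2j-1}| \geq d_1 n$. Fix $a \in V'_1$; the analysis rotates symmetrically around the odd cycle $V_1 V_2 \cdots V_{2j-1}$. The goal is to produce at least $d_1 n^{v(T^j_H)}$ embeddings $\phi : V(\hat{T}^j_H) \to V(G_1)$ with $\phi(u) = a$, since each such embedding contributes a distinct type-$j$ $Sub(H)$-absorber for $a$.

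For $j = 2$, the graph $\hat{T}^2_H$ consists of $H^1$ (bipartite with parts $(U, V)$) together with $u$ attached to the endpoints $x \in U$, $y \in V$ of a fixed edge. Since $a \in V'_1$ has at least $(d - 3\ve)|V_i|$ neighbors in each of $V'_2$ and $V'_3$, and $(V_2, V_3)$ is an $(\ve, d)$-regular pair, the bipartite subgraph $G_1[N_{G_1}(a) \cap V'_2,\, N_{G_1}(a) \cap V'_3]$ has density at least $d - \ve$ and $\Omega((n/t)^2)$ edges. Applying \Cref{lem:Kab-supersaturation} to this subgraph yields at least $\delta (n/t)^{v(H^1)}$ embeddings of $H^1$ with $U \mapsto N_{G_1}(a) \cap V'_2$ and $V \mapsto N_{G_1}(a) \cap V'_3$; each extends via $u \mapsto a$ to an embedding of $\hat{T}^2_H$. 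The hierarchy $d_1 \ll 1/T$ permits choosing $d_1 \leq \delta / T^{v(T^2_H)}$, giving the required $d_1 n^{v(T^2_H)}$ absorbers.

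For $j = 3$, the graph $\hat{T}^3_H$ contains the $5$-cycle $u$-$x$-$w$-$z$-$y$-$u$, and removing these five vertices leaves a bipartite graph attached to $w$ and $z$ through the two edges $x'z$ and $y'w$, where $x'y'$ is a fixed edge of the other $H^1$ copy. First place the $5$-cycle by choosing $x' \in N_{G_1}(a) \cap V'_2$ and $y' \in N_{G_1}(a) \cap V'_5$, then $w' \in V'_3 \cap N_{G_1}(x')$ and $z' \in V'_4 \cap N_{G_1}(w') \cap N_{G_1}(y')$; super-regularity along the consecutive pairs $(V_2, V_3),\, (V_3, V_4),\, (V_4, V_5)$ guarantees $\Omega((n/t)^4)$ such tuples. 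Next embed the bipartite residual (of $v(T^3_H) - 4$ vertices) into the remaining regular structure so that the two attachment vertices of the first $H^1$ copy are mapped adjacent to $z'$ and $w'$, respectively; iterated applications of \Cref{lem:Kab-supersaturation,lem:fix-embedding} within the regular pairs provide $\Omega((n/t)^{v(T^3_H) - 4})$ completions, producing at least $d_1 n^{v(T^3_H)}$ absorbers after multiplication. The main obstacle is coordinating this residual embedding with the already-placed $5$-cycle while preserving both vertex-disjointness and the two attachment edges; this is handled by standard regularity-method bookkeeping, relying on \Cref{lem:super-regular-delete} for the minimum-degree guarantees and on the hierarchy $\beta \ll \alpha \ll d_1 \ll \rho \ll 1/T$ to absorb all multiplicative constants.
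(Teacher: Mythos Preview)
Your $j=2$ argument is correct and is essentially the paper's: embed $T^2_H=H^1$ inside $N_{G_1}(a)$ using the regular-pair edges between $N_{G_1}(a)\cap V'_2$ and $N_{G_1}(a)\cap V'_3$.

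For $j=3$ there is a genuine gap. Your structural claim that removing the $5$-cycle $u,x,w,z,y$ from $\hat T^3_H$ leaves a residual attached to the cycle only through the edges $x'z$ and $y'w$ is false. In $T^1_H$ the branch vertex $x$ has $\deg_H(x)$ neighbours (the subdivision vertices of the edges of $H$ at $x$); only one of these is $w$, and the remaining $\deg_H(x)-1$ survive in the residual and must be embedded into $N_{G_1}(\phi(x))$. Likewise the subdivision vertex $y$ is adjacent in $T^1_H$ to a second branch vertex $x_0\neq x$, which also lies in the residual and must land in $N_{G_1}(\phi(y))$. Hence after placing the $5$-cycle across $V'_1,\dots,V'_5$, the bipartite residual carries adjacency constraints to \emph{four} already-placed vertices lying in four different clusters; your proposal addresses only the two constraints coming from $w,z$ and dismisses the rest as ``standard bookkeeping''. (You also overload the symbols $x',y'$ --- already reserved for the fixed edge of the second $H^1$ copy --- as images of $x,y$, which obscures exactly this issue.)

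The paper sidesteps this by removing only $u,x,y$, not the full $5$-cycle. The key observation is that $\tilde T^3_H \defeq T^3_H - \{x,y\}$ is itself bipartite, with one class containing $w$ together with \emph{all} other $T^3_H$-neighbours of $x$, and the other class containing $z$ together with all $T^3_H$-neighbours of $y$. With $u\in V'_3$, $x\in N(u)\cap V'_2$, $y\in N(u)\cap V'_4$, a single application of \Cref{lem:Kab-supersaturation} to the regular pair $(V_1,V_5)$ embeds all of $\tilde T^3_H$ into $G_1[N(x)\cap V'_1,\,N(y)\cap V'_5]$; every attachment from $x$ and from $y$ is then satisfied automatically because the two bipartition classes sit inside $N(x)$ and $N(y)$ respectively. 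This gives $V'_3\subseteq B$, which already yields $|B|\ge d_1 n$.
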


    \begin{claimproof}[Proof of \Cref{clm:size-of-B}]
        If $j = 2$, by the definition of an $\ve$-regular pair, the following holds. For each $i \in [3]$, for every $v \in V'_i$, the number of edges between the sets $N_{G_1}(v) \cap V'_{i-1}$ and $N_{G_1}(v) \cap V'_{i+1}$ is at least $(d - \ve)(d - 3\ve )^2|V_{i-1}||V_{i+1}| \geq \frac{(d - \ve)(d-3\ve)^2(1-\ve)^2}{T^2}n^2 \geq \rho n^2$. Then by \Cref{thm:supersaturation}, there are at least $d_1 (v(T^2_H)!) n^{v(T^2_H)}$ copies of $T^2_H$ in $G_1[N_{G_1}(v)]$. This means there are at least $d_1 n^{v(T^2_H)}$ distinct type-2 $\mathrm{Sub}(H)$-absorbers for $v$.
        This implies $\bigcup_{i \in [3]} V'_i \subseteq B$. Since $|V'_1| + |V'_2| + |V'_3| \geq \frac{3(1-\ve)(1-2\ve)}{T}n \geq d_1 n$, in this case, the inequality $|B| \geq d_1 n$ holds.

        Now we consider the case $j = 3$. We now show that for every $u \in V'_3$, there are many copies of $T^3_H$ such that each copy forms a $\hat{T}^3_H$ together with $u$.
        For each vertex $x \in V'_2$ and $y \in V'_4$, let $N_x = N_{G_1}(x) \cap V'_1$ and $N_y = N_{G_1}(y) \cap V'_5$. By the definition of an $\ve$-regular pair, there are at least $(d-\ve)|N_x||N_y| \geq (d-\ve)(d-3\ve)^2|V_1||V_5| \geq \frac{(d - \ve)(d - 3\ve)^2(1-\ve)^2}{T^2}n^2 \geq \rho n^2$ edges in $G_1[N_x, N_y]$. By \Cref{lem:Kab-supersaturation}, there are at least $\frac{d_1 T^2}{(d - 3\ve)^2(1-\ve)^2} (v(T^3_H)!)n^{v(\Tilde{T}^3_H)}$ distinct copies of $\Tilde{T}^3_H$ such that together with $x$ and $y$, each copy forms a $T^3_H$.
        For each $u \in V'_3$, there are at least $(d-3\ve)^2 |V_2||V_4| \geq \frac{(d-3\ve)^2(1-\ve)^2}{T^2}n^2$ distinct pairs $(x, y) \in V'_2 \times V'_4$ such that $ux$, $uy \in E(G_1)$. Thus, there are at least $d_1 (v(T^3_H)!) n^{v(T^3_H)}$ distinct copies of $T^3_H$ such that together with $u$, each copy forms a $\hat{T}^3_H$. This means, for every $u \in V'_3$, there are at least $d_1 n^{v(T^3_H)}$ distinct type-3 $\mathrm{Sub}(H)$-absorbers for $u$. This means $V'_3 \subseteq B$. Since $|V'_3| \geq \frac{(1-\ve)(1 - 2\ve)}{T}n \geq d_1 n$, we have $|B| \geq d_1 n$.
    \end{claimproof}

    Let the two graphs $F_1$ and $F_2$ be the $d_1$-absorbing graph and the $(d_1, B)$-exchanging graph of $G_1$, respectively. We denote by $X$ the set of vertices of $G_1$ that are not elements of $B$. Assume for a vertex $u \in X$, the inequality $e(G_1[N_{G_1}(u)]) > \rho n^2$ holds. If $j = 2$, then by \Cref{thm:supersaturation}, $u$ should be contained in $B$, a contradiction. If $j = 3$, then $G_1$ does not have a triangle, so a contradiction. Thus, for every vertex $u \in X$, the inequality $e(G_1[N_{G_1}(u)]) \leq \rho n^2$ holds. We now claim the following.

    \begin{claim}\label{clm:XB}
        For every $u \in X$ and $v \in B$, the edge $uv$ is contained in $F_1$ or $F_2$.
    \end{claim}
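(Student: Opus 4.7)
The plan is to establish the dichotomy by a case split on $|W|$, where $W \defeq N_{G_1}(u) \cap N_{G_1}(v)$. I fix an auxiliary constant $\eta$ in the hierarchy with $\rho \ll \eta \ll \gamma$ and write $n \defeq v(G_1)$ for brevity. First consider the case $|W| \geq \eta n$; I will show $uv \in E(F_2)$. A $Sub(H)$-exchanger for $\{u, v\}$ corresponds to an embedding of the bipartite graph $S_H = H^1 - z$ into $G_1 \setminus \{u, v\}$ such that the two designated branch vertices $x_0, y_0$ (which in $\hat{S}_H$ are adjacent to both $u$ and $v$) are sent into $W$. Since $S_H$ is bipartite with $x_0, y_0$ on the branch side, \Cref{lem:fix-embedding} applied to $S_H$ with host $G_1$ and distinguished set $W$ furnishes $\Omega(n^{v(S_H)})$ embeddings of $S_H$ into $G_1$ whose entire branch side lies in $W$. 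Discarding the $O(n^{v(S_H) - 1})$ embeddings whose image meets $\{u, v\}$ leaves at least $d_1 n^{v(S_H)}$ exchangers, so, using $u \notin B$ (since $u \in X$) and $v \in B$, we conclude $uv \in E(F_2)$.

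Now consider the complementary case $|W| < \eta n$; I will show $uv \in E(F_1)$. Write $A_u \defeq N_{G_1}(u) \setminus N_{G_1}(v)$, $B_v \defeq N_{G_1}(v) \setminus N_{G_1}(u)$, and $C \defeq V(G_1) \setminus (N_{G_1}(u) \cup N_{G_1}(v) \cup \{u, v\})$; the case hypothesis together with $\delta(G_1) \geq (1/3 + \gamma/4) n$ gives $|C| \leq (1/3 - \gamma/2 + \eta) n$. The sparsity $e(G_1[N_{G_1}(u)]) \leq \rho n^2$ from $u \in X$, combined with Markov's inequality, ensures that at least $3/4$ of the vertices $w \in N_{G_1}(u)$ satisfy $d_{N_{G_1}(u)}(w) \leq 100 \rho n$. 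For each such good $w$, decomposing $d_{G_1}(w)$ as the contributions from $\{u, v\}$, $N_{G_1}(u) \setminus \{w\}$, $B_v$, and $C$ forces
\[
d_{B_v}(w) \geq (1/3 + \gamma/4) n - 2 - 100 \rho n - (1/3 - \gamma/2 + \eta) n \geq \gamma n / 2.
\]
Summing over the good $w$ yields $e_{G_1}(N_{G_1}(u), B_v) \geq (3/4) \cdot (1/3) n \cdot \gamma n / 2 \geq \gamma n^2 / 8$; subtracting the trivial bound $e_{G_1}(W, B_v) \leq \eta n^2$ then delivers $e_{G_1}(A_u, B_v) \geq \gamma n^2 / 16$.

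To finish Case B, observe that $T^1_H$ is bipartite with parts $U \cup \{z\}$ and $V \cup \{w\}$ (in the notation of \Cref{def:type1unit}), containing the four special vertices $x, z$ and $y, w$ on the respective sides, so $T^1_H$ is a spanning subgraph of $K_{|U|+1, |V|+1}$. Applying \Cref{lem:Kab-supersaturation} to the bipartite graph $G_1[A_u, B_v]$, which has parts of size at most $n$ and at least $(\gamma/16) n^2$ edges, produces $\Omega(n^{v(T^1_H)})$ embeddings of $T^1_H$ with $U \cup \{z\} \to B_v$ and $V \cup \{w\} \to A_u$. Every such embedding satisfies $x, z \in B_v \subseteq N_{G_1}(v)$ and $y, w \in A_u \subseteq N_{G_1}(u)$, i.e., gives a type-$1$ $Sub(H)$-absorber for $\{u, v\}$; trimming the $O(n^{v(T^1_H) - 1})$ embeddings whose image meets $\{u, v\}$ leaves at least $d_1 n^{v(T^1_H)}$ valid absorbers, so $uv \in E(F_1)$.

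The main technical hurdle is the edge-count lower bound on $e_{G_1}(A_u, B_v)$ in Case B. The $\gamma$-slack in the minimum degree of $G_1$ is used precisely to beat the otherwise-tight trivial bound $e_{G_1}(N_{G_1}(u), C) \leq |N_{G_1}(u)| \cdot |C|$, which (with $|C|$ potentially as large as nearly $n/3$) would absorb essentially all edges leaving $N_{G_1}(u)$ and leave no slack for $e_{G_1}(A_u, B_v)$. Placing $\rho, \eta$ strictly below $\gamma$ in the constant hierarchy is what allows the computation to close.
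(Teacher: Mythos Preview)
Your proof is correct and follows essentially the same dichotomy as the paper: split on the size of the common neighbourhood $N_{G_1}(u)\cap N_{G_1}(v)$, use \Cref{lem:fix-embedding} to produce many $Sub(H)$-exchangers when it is large, and count edges between $N_{G_1}(u)$ and $N_{G_1}(v)\setminus N_{G_1}(u)$ to produce many type-$1$ absorbers via \Cref{lem:Kab-supersaturation} when it is small. The paper uses the fixed threshold $\tfrac{\gamma}{10}n$ rather than an auxiliary $\eta$, and in the small-overlap case it obtains the edge bound $e(G_1[Z_1,U_1])\ge \tfrac{\gamma}{10}n^2$ by summing $d_{G_1}(u';Z_1\cup U)$ over all $u'\in Z_1$ and subtracting $2e(G_1[Z_1])\le 2\rho n^2$ directly, whereas you pass through a Markov step to isolate low-degree vertices inside $N_{G_1}(u)$; both routes exploit the same sparsity $e(G_1[N_{G_1}(u)])\le\rho n^2$ and arrive at the same conclusion. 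One cosmetic point: redefining $n\defeq v(G_1)$ clashes with the ambient $n=v(G)$ used in the thresholds $d_1 n^{v(T^1_H)}$ and $\rho n^2$; since $v(G_1)\ge(1-\ve)n$ this is harmless, but it would be cleaner to keep $n=v(G)$ throughout.
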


    \begin{claimproof}[Proof of \Cref{clm:XB}]
        Fix a vertex $v \in B$ and let $U = N_{G_1}(v)$. We partition the set $X$ into $X_1$ and $X_2$ as follows. Let $X_1 = \{u \in X: |N_{G_1}(u) \cap U| \leq \frac{\gamma}{10}n\}$ and $X_2 = X \setminus X_1$.

        Let $u \in X$. Assume $u \in X_1$. Let $Z_1 = N_{G_1}(u)$ and let $Y = V(G_1) \setminus (Z_1 \cup U)$. We denote by $U_1 = U \setminus Z_1$. 
        Since $\delta(G_1) \geq \left(\frac{1}{3} + \frac{\gamma}{4}\right)n$ and $|Z_1 \cap U| \leq \frac{\gamma}{10}n$, the inequality $|Y| \leq \left(\frac{1}{3} - \frac{2\gamma}{5} \right)n$ holds. 
        Thus, for every $u' \in Z_1$, the degree $G_1[u'; Z_1 \cup U]$ is at least $\frac{\gamma}{2}n$. Since $u \in X$, we know that $e(G_1[Z_1]) \leq \rho n^2$. 
        These imply that $e(G_1[Z_1, U_1]) \geq \frac{\gamma}{2}n \left(\frac{1}{3} + \frac{\gamma}{4} \right)n - 2\rho n^2 \geq \frac{\gamma}{10}n^2$ holds. 
        Then by \Cref{lem:Kab-supersaturation}, there are at least $d_1(v(T^1_H)!)n^{v(T^1_H)}$ copies of $T^1_H$ in $G_1[Z_1, U_1]$ such that together with $u$ and $v$, each copy of $T^1_H$ forms a $\hat{T}^1_H$. Thus, there are at least $d_1 n^{v(T^1_H)}$ distinct type-1 $\mathrm{Sub}(H)$-absorbers for $\{u, v\}$. This means $uv \in E(F_1)$.

        We now assume $u \in X_2$. Let $Z_2 = N_{G_1}(u)$. The size $|Z_2 \cap U| \geq \frac{\gamma}{10}n$ causes $u \in X_2$. Since $\delta(G_1) \geq \left(\frac{1}{3} + \frac{\gamma}{4} \right)n$, by \Cref{lem:fix-embedding}, there are at least $d_1 (v(S_H)!)n^{v(S_H)}$ copies of $S_H$ in $G_1 - \{u, v\}$ such that together with $u$ and $v$, each copy of $S_H$ forms a $\hat{S}_H$. This means there are at least $d_1 n^{v(S_H)}$ distinct $\mathrm{Sub}(H)$-exchangers for $\{u, v\}$, so $uv \in E(F_2)$.

        Thus, we conclude that for every $u \in X$ and $v \in B$, the edge $uv$ is contained in $F_1$ or $F_2$.
    \end{claimproof}

    \Cref{clm:XB} implies for all $u \in X$, we have $d_{F_1}(u; B) \geq \frac{|B|}{2} \geq \frac{d_1}{2}n$ or $d_{F_2}(u; B) \geq \frac{|B|}{2} \geq \frac{d_1}{2}n$. We are now ready to apply \Cref{lem:type2-absorber}. By \Cref{lem:type2-absorber}, there is a set $A \subseteq V(G_1)$ with size at most $\alpha n$ such that for any set $U \subseteq V(G_1) \setminus A$ with size at most $\beta n$, the set $A$ is a $\mathrm{Sub}(H)$-absorber for $U$. Let us fix such a $\mathrm{Sub}(H)$-absorber $A$.

    Let $G_2 = G_0 - A$. Then $v(G_2) \geq (1 - \alpha)n$ and $\delta(G_2) \geq \delta(G_0) - \alpha n \geq \left(1 - \frac{1}{\xi(H)} + \frac{\gamma}{4}\right)n$. Then by \Cref{lem:cover-almost}, there is a set $W \subseteq V(G_2)$ such that $V_0 \subseteq W$, the induced graph $G_2[W]$ has a perfect $H$-subdivision tiling, and $|V(G_2) \setminus W| \leq C$. Let $J = V(G_2) \setminus W$. We observe that $V(G) = A \cup W \cup J$ and $J \subseteq V(G_1) \setminus A$. Since $|J| \leq C \leq \beta n$, by our choice of $A$, the induced graph $G_1[A \cup J]$ has a perfect $H$-subdivision tiling. Together with $G[W]$, the graph $G$ has a perfect $H$-subdivision tiling. This completes the proof of Case 1.~\\~

    \item $R$ is a $\{C_3, C_5\}$-free graph.

    In this case, we only use the type-1 $\mathrm{Sub}(H)$-absorbers. The proof is similar to the proof of \Cref{thm:main}, but fortunately, it will be simpler.

    By (R4) and (R6), the graph $G_1$ is also a $\{C_3, C_5\}$-free graph. A classical result of Andrásfai~\cite{andrasfal1964graphentheoretische} states that every graph on $n$-vertices with a minimum degree of at least $\frac{2}{2k+1} n + 1$ either contains an odd cycle with a length of at most $2k-1$ or is bipartite. Since $\delta(G_1) \geq \left(\frac{1}{3} + \frac{\gamma}{4} \right)n > \frac{2}{7}n + 1$ and $G_1$ is a $\{C_3, C_5\}$-free graph, we can deduce that $G_1$ is a bipartite graph.

    Let $(X, Y)$ be the bipartition of $G_1$. Then we have $\delta(G_1) \leq |X| \leq n - \delta(G_1)$, implying $2\delta(G_1) - |X| \geq \frac{3\gamma}{4}n$. Let $x \in X$ and $y \in Y$. We write $N_x = N_{G_1}(x)$ and $N_y = N_{G_1}(y)$. Since $G_1$ is bipartite, we have $N_x \subseteq Y$ and $N_y \subseteq X$. Then 
    $$e(G_1[N_x, N_y]) \geq (\delta(G_1) - |X \setminus N_y|)|N_x| \geq (2\delta(G_1) - |X|)\delta(G_1) \geq \frac{\gamma}{4}n^2.$$ 
    By \Cref{lem:Kab-supersaturation}, there are at least $d_1(v(T^1_H)!)n^{v(T^1_H)}$ copies of $T^1_H$ in $G_1[N_x, N_y]$ such that together with $\{x, y\}$, each copy of $T^1_H$ forms a $\hat{T}^1_H$. This means for every $x \in X$ and $y \in Y$, there are at least $d_1n^{v(T^1_H)}$ distinct type-1 $\mathrm{Sub}(H)$-absorbers for $\{x, y\}$.

    Let $F$ be the $d_1$-absorbing graph for $G_1$. Then by the previous argument, all pairs $xy$ with $x \in X$ and $y \in Y$ are edges of $F$. By \Cref{lem:type1-absorber}, there is a set $A \subseteq V(G_1)$ such that $|A|$ is even, $|A|$ is at most $\alpha n$, and it satisfies the following:

    \begin{itemize}[label = {}]
        \item The set $A$ is a $\mathrm{Sub}(H)$-absorber for any set $U \subseteq V(G_1) \setminus A$ satisfying $|U| \leq \beta n$ and $F[U]$ has a perfect matching.
    \end{itemize}
    We now consider a graph $G_2 = G_1 - A$. Then $V(G_2) \geq (1 - \ve - \alpha)n$ and $\delta(G_2) \geq \delta(G_1) - \alpha n \geq \left(\frac{1}{3} + \frac{\gamma}{10} \right)n$. Let $X' = X \setminus A$ and $Y' = Y \setminus A$. Since $e(G_2[X', Y']) \geq \frac{1}{6}n^2$, by the Erdős-Stone-Simonovits theorem, there is a subgraph $Q$ in $G_1$ which is isomorphic to $K_{(2h'+2)C, (2h'+2)C}$. We note that by \Cref{obs:hat-H-reservoir}, the graph $Q$ has a perfect $\hat{H}$-tiling, so it has a perfect $H$-subdivision tiling.

    Let $G_3 = G_0 - (A \cup V(Q))$. Then $v(G_3) \geq (1 - \alpha)n - (2h' + 2)C$ and $\delta(G_3) \geq \left(1 - \frac{1}{\xi(H)} + \frac{\gamma}{10} \right)n$. Then by \Cref{lem:cover-almost}, there is a set $W \subseteq V(G_3)$ such that $V_0 \subseteq W$ and $G[W]$ has a perfect $H$-subdivision tiling, $|V(G_3) \setminus W|$ is at most $C$, and the size of $W$ is even. Let $J = V(G_3) \setminus W$. We observe that $V(G) = A \cup V(Q) \cup W \cup J$. Note that all $n$, $|A|$, and $|W|$ are even, so is $|J|$. Moreover, $J \subseteq V(G_1) \setminus (A \cup V(Q))$ and $|J| \leq C$.

    Let $J_X = J \cap X$ and $J_Y = J \cap Y$. We may assume $|J_X| - |J_Y| = 2c$ for some non-negative integer $c \leq \frac{C}{2}$. Let us pick two sets $Q_1 \subseteq V(Q) \cap X$ and $Q_2 \subseteq V(Q) \cap Y$ such that $|Q_1| = ch'$ and $|Q_2| = c(h' + 2)$. Then $(J_X \cup Q_1) \subseteq X$ and $(J_Y \cup Q_2) \subseteq Y$. Since $|Q_1| - |Q_2| = -2c$, the equality $|J_X \cup Q_1| = |J_Y \cup Q_2|$ holds. We showed that $F$ contains a complete bipartite graph on the bipartition $(X, Y)$ as a subgraph, and the set $J \cup Q_1 \cup Q_2$ has a perfect matching in $F$. Moreover, we have $|J \cup Q_1 \cup Q_2| \leq (2h' + 2)C \leq \beta n$. Thus, by our choice of $A$, the induced graph $G[A \cup J \cup Q_1 \cup Q_2]$ has a perfect $H$-subdivision tiling.
    Let $Q' = Q - (Q_1 \cup Q_2)$. By \Cref{obs:hat-H-reservoir}, the graph $Q'$ has a perfect $\hat{H}$-tiling, so it has a perfect $H$-subdivision tiling. Together with $G[W]$, $G[A \cup J \cup Q_1 \cup Q_2]$, and $G[V(Q')]$, the graph $G$ has a perfect $H$-subdivision tiling. This completes the proof of \Cref{lem:lemma-for-main2}.

\end{Cases}
\end{proof}

%---------------------------------------------------------------
\section{Concluding remarks and open problems}
In this article, we determined an asymptotically tight minimum degree threshold that ensures the existence of a perfect $H$-subdivision tiling for every graph $H$. In many cases, a minimum degree threshold for perfect $H$-subdivision tilings is much smaller than for perfect $H$-tilings since perfect $H$-subdivision tilings are allowed to use not only $H$ but also subdivisions of $H$. To prove that the weaker minimum degree suffices, we developed new approaches using the absorption method combined with the regularity lemma and domination numbers.

Both \Cref{thm:main,thm:hcf-2} are tight up to $o(n)$ terms. We conjecture that our results hold with the minimum degree condition sharp up to additive constants depending only on $H$.

\begin{conjecture}\label{conj:uptoconstant-hcf1}
    Let $H$ be a graph that is not a disjoint union of isolated vertices. Then there exists a constant $C_H$ depending only on $H$ such that the following holds for all $n > 0.$
    
    If $\hcf_{\xi}(H) \neq 2$,
    $$ \delta_{\mathrm{sub}}(n, H) \leq \left(1 - \frac{1}{\xi^*(H)} \right)n + C_H.$$
    
    Otherwise,
    \begin{align*}
        \delta_{\mathrm{sub}}(n, H) &\leq \frac{1}{2}n + C_H &&\text{if } n \text{ is odd,}\\ 
        \delta_{\mathrm{sub}}(n, H) &\leq \left(1 - \frac{1}{\xi^*(H)}\right)n + C_H &&\text{if } n \text{ is even.} 
    \end{align*}
\end{conjecture}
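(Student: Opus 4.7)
The plan is to refine the absorption strategy of \Cref{lem:lemma-for-main1,lem:lemma-for-main2} using the standard ``stability plus extremal case'' dichotomy, in the spirit of K\"uhn and Osthus' proof of \Cref{thm:kuhn-osthus}. The key new ingredient would be a stability theorem of the following form: if $G$ is an $n$-vertex graph with $\delta(G) \geq \left(1 - 1/\xi^*(H)\right)n$ (respectively $\geq n/2$ when $hcf_\xi(H) = 2$ and $n$ is odd) and $G$ admits no perfect $H$-subdivision tiling, then $G$ is $\ve_H$-close, in edit distance, to one of the explicit extremal constructions from \Cref{prop:lowerbound-hcf,prop:hcf-not-2,prop:hcf-2-1/3}, where $\ve_H > 0$ depends only on $H$.

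Granting such a stability theorem, the proof splits into two cases. In the \emph{non-extremal} case, where $G$ is $\ve_H$-far from every extremal construction, the supersaturation counts in \Cref{sec:absorber}---for type-$j$ $Sub(H)$-absorbers and $Sub(H)$-exchangers---should improve by a multiplicative factor depending only on $H$. This extra slack allows one to rerun the arguments of \Cref{lem:lemma-for-main1,lem:lemma-for-main2} with the $\gamma n$ additive loss replaced by an additive constant $C_H$: both the $Sub(H)$-absorber from \Cref{lem:type1-absorber,lem:type2-absorber} and the almost-perfect $H^*$-tiling from \Cref{lem:cover-almost} can then be produced using only a constant-size budget of ``wasted'' degree, making the final error $O_H(1)$ rather than $\gamma n$.

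In the \emph{extremal} case, $G$ is close to one of the explicit extremal graphs: a near-balanced complete bipartite graph $K_{a,b}$ with $a/b \approx 1/(\xi^*(H) - 1)$, or---when $hcf_\xi(H) = 2$ and $n$ is even---possibly the disjoint union $K_{a',b'} \sqcup K_c$ from \Cref{prop:hcf-2-1/3}. In each case I would build the tiling directly: use a bounded number of copies of $H^*$ and $\hat{H}$ to cover the constantly many exceptional vertices that violate the extremal structure, simultaneously choosing the copies so as to correct any size imbalance between the parts, and then tile the remaining dense balanced bipartite piece(s) with $\hat{H}$-copies via \Cref{obs:hat-H-reservoir}. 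Parity obstructions in the $hcf_\xi(H) = 2$ case would be resolved by choosing each absorbing subdivision to contribute the correct parity of vertices in each bipartition class.

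The main obstacle is the stability theorem itself. Unlike the $H$-tiling setting---where the extremal graphs are complete multipartite and controlled by the single parameter $\chi^*(H)$---the subdivision setting admits many bipartite subdivisions of $H$ with different bipartition ratios, and the space barrier $\xi(H)$ is achieved only by the distinguished subdivision $H^*$. Showing that a graph $G$ with tight minimum degree but no perfect subdivision tiling must genuinely resemble one of the explicit extremal examples, rather than some intermediate pathological structure, will require carefully tracking which subdivisions in $Sub(H)$ can be embedded in $G$ and ruling out a potentially wide family of ``almost-extremal but not quite'' configurations. A secondary obstacle, specific to the $hcf_\xi(H) = 2$ case with $n$ even, is handling the two distinct extremal constructions in parallel: one must classify which type $G$ is close to and treat each extremal region separately, while ensuring no graph falls into a gray zone between them.
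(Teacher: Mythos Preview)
The statement you are attempting is \Cref{conj:uptoconstant-hcf1}, which the paper explicitly leaves as an \emph{open conjecture} in the concluding remarks. There is no proof in the paper to compare your proposal against; the paper only establishes the asymptotic versions (\Cref{thm:main,thm:hcf-2}) with an $o(n)$ error term and then conjectures the constant-error strengthening.

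As for the proposal itself, the stability-plus-extremal dichotomy is the natural template, but your non-extremal step contains a genuine gap. You write that being $\ve_H$-far from the extremal constructions makes the supersaturation counts ``improve by a multiplicative factor depending only on $H$'', and that this slack lets you rerun \Cref{lem:lemma-for-main1,lem:lemma-for-main2} with the $\gamma n$ loss replaced by $O_H(1)$. This is not how stability transfers into the argument: being far from extremal does not raise the minimum degree of $G$, and the $\gamma n$ cushion in the paper is consumed not primarily by supersaturation but by the regularity lemma (the exceptional set $V_0$ of size $\ve n$, the low-degree vertices stripped in \Cref{lem:super-regular-delete}, and the degree loss $(d+\ve)n$ in passing to $G_0$) and by \Cref{lem:cover-almost}, which needs genuine extra minimum degree to absorb the set $X$ before invoking \Cref{thm:chritical-constant-missing}. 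None of these losses shrink just because $G$ is far from a complete bipartite graph. A workable non-extremal argument would have to identify \emph{which} structural surplus (e.g., many edges inside what would be the large side of the extremal bipartition) is guaranteed by non-extremality, and then exploit that surplus to build the absorber and the near-perfect $H^*$-tiling without any additive $\gamma n$---likely bypassing the regularity lemma altogether, or at least handling $V_0$ by a separate mechanism. Until that step is made precise, the proposal is a plausible outline but not a proof.
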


Our main results, \Cref{thm:main,thm:hcf-2}, imply that perfect $H$-subdivision tilings and perfect $H$-tilings behave differently in many cases. In particular, the numbers $\delta_{\mathrm{sub}}(n, K_r)$ behave irregularly for small values of $r$. In contrast to perfect $K_r$-tilings, the number $\lim_{n\to \infty} \frac{\delta_{\mathrm{sub}}(n, K_r)}{n}$ does not strictly increase as $r$ increases.
For another example, consider a tree $T$. According to \Cref{thm:kuhn-osthus}, we have $\delta(n, T) = \frac{1}{2}n + O(1)$. However, for $\delta_{\mathrm{sub}}(n, T)$, \Cref{thm:main} implies that there is a positive constant $c_T$ such that $\delta_{\mathrm{sub}}(n, T) \leq \left(\frac{1}{2} - c_T + o(1) \right)n$ since $\hcf_{\xi}(T) = 1$.

Moreover, as we saw before, monotonicity does not hold for subdivision tilings. For $H_2 \subseteq H_1$ with $H_2$ being a spanning subgraph, $\delta(n, H_2) \leq \delta(n, H_1)$ is trivial, but $\delta_{\mathrm{sub}}(n, H_2) \leq \delta_{\mathrm{sub}}(n, H_1)$ is not true as $\delta_{\mathrm{sub}}(n, K_4) = \frac{2}{5}n + o(n) < \delta_{\mathrm{sub}}(n, C_4) = \frac{1}{2}n + o(n)$.

A perfect $H$-subdivision tiling is a special case of tiling a host graph by several kinds of graphs. Let $\calF$ be a collection of graphs and let $\hcf(\calF)$ be the highest common factor of the sizes of graphs in $\calF$. For each positive integer $n$ divisible by $\hcf(\calF)$, we define $\delta(n, \calF)$ to be the smallest integer $k$ such that any $n$-vertex graph $G$ with a minimum degree of at least $k$ has a perfect tiling that uses only graphs in $\calF$.
In this notation, we estimated an asymptotically tight value for $\delta(n, \mathrm{Sub}(H))$ for every graph $H$. Thus, we suggest the following problem.

\begin{problem}\label{prob:prob}
    Let $\calF$ be a collection of graphs and $n$ be a positive integer that is divisible by $\hcf(\calF)$. Determine the number $\delta(n, \calF).$
\end{problem}

\subsection*{Acknowledgement}
The author would like to thank his advisor, Jaehoon Kim, for his very helpful comments and advice. He would also like to extend his thanks to anonymous reviewers for very carefully reading this article and providing helpful comments.


\begin{thebibliography}{99}

\bibitem{alon1990transversal}
Noga Alon.
\newblock Transversal numbers of uniform hypergraphs.
\newblock {\em Graphs and Combinatorics}, 6(1):1--4, 1990.

\bibitem{alon1996h}
Noga Alon and Raphael Yuster.
\newblock $H$-factors in dense graphs.
\newblock {\em Journal of Combinatorial Theory, Series B}, 66(2):269--282,
  1996.

\bibitem{andrasfal1964graphentheoretische}
B\'{e}la Andr{\'a}sfai.
\newblock Graphentheoretische extremalprobleme.
\newblock {\em Acta Mathematica Hungarica}, 15(3-4):413--438, 1964.

\bibitem{arnautov1974estimation}
Vladimir~I. Arnautov.
\newblock Estimation of the exterior stability number of a graph by means of
  the minimal degree of the vertices.
\newblock {\em Prikl. Mat. i Programmirovanie}, 11(3-8):126, 1974.

\bibitem{balogh2022tilings}
J{\'o}zsef Balogh, Lina Li, and Andrew Treglown.
\newblock Tilings in vertex ordered graphs.
\newblock {\em Journal of Combinatorial Theory, Series B}, 155:171--201, 2022.

\bibitem{bottcher2009proof}
Julia B{\"o}ttcher, Mathias Schacht, and Anusch Taraz.
\newblock Proof of the bandwidth conjecture of Bollob{\'a}s and Koml{\'o}s.
\newblock {\em Mathematische Annalen}, 343(1):175--205, 2009.

\bibitem{erdHos1965limit}
Paul Erd{\H{o}}s and Mikl{\'o}s Simonovits.
\newblock A limit theorem in graph theory.
\newblock In {\em Studia Sci. Math. Hung}. Citeseer, 1965.

\bibitem{erdHos1983supersaturated}
Paul Erd{\H{o}}s and Mikl{\'o}s Simonovits.
\newblock Supersaturated graphs and hypergraphs.
\newblock {\em Combinatorica}, 3(2):181--192, 1983.

\bibitem{erdos1946structure}
Paul Erd{\"o}s and Arthur~H. Stone.
\newblock On the structure of linear graphs.
\newblock {\em Bulletin of the American Mathematical Society},
  52(12):1087--1091, 1946.

\bibitem{freschi2022dirac}
Andrea Freschi and Andrew Treglown.
\newblock Dirac-type results for tilings and coverings in ordered graphs.
\newblock In {\em Forum of Mathematics, Sigma}, volume~10, page e104. Cambridge
  University Press, 2022.

\bibitem{hajnal1970proof}
Andr{\'a}s Hajnal and Endre Szemer{\'e}di.
\newblock Proof of a conjecture of P.Erd\H{o}s.
\newblock {\em Combinatorial theory and its applications}, 2:601--623, 1970.

\bibitem{han2021tilings}
Jie Han, Patrick Morris, and Andrew Treglown.
\newblock Tilings in randomly perturbed graphs: Bridging the gap between
  hajnal-szemer{\'e}di and Johansson-Kahn-Vu.
\newblock {\em Random Structures \& Algorithms}, 58(3):480--516, 2021.

\bibitem{hurley2022sufficient}
Eoin Hurley, Felix Joos, and Richard Lang.
\newblock Sufficient conditions for perfect mixed tilings.
\newblock {\em Journal of Combinatorial Theory, Series B}, 170:128--188 2025.

\bibitem{hyde2019degree}
Joseph Hyde, Hong Liu, and Andrew Treglown.
\newblock A degree sequence Koml{\'o}s theorem.
\newblock {\em SIAM Journal on Discrete Mathematics}, 33(4):2041--2061, 2019.

\bibitem{katona1964graph}
GOH Katona, T~Nemetz, and M~Simonovits.
\newblock On a graph-problem of Tur{\'a}n in the theory of graphs.
\newblock {\em Matematikai Lapok}, 15:228--238, 1964.

\bibitem{kim2019blow}
Jaehoon Kim, Daniela K{\"u}hn, Deryk Osthus, and Mykhaylo Tyomkyn.
\newblock A blow-up lemma for approximate decompositions.
\newblock {\em Transactions of the American Mathematical Society},
  371(7):4655--4742, 2019.

\bibitem{komlos2000tiling}
J{\'a}nos Koml{\'o}s.
\newblock Tiling Tur{\'a}n theorems.
\newblock {\em Combinatorica}, 20(2):203--218, 2000.

\bibitem{komlos2001proof}
J{\'a}nos Koml{\'o}s, G{\'a}bor N. S{\'a}rk{\"o}zy, and Endre Szemer{\'e}di.
\newblock Proof of the Alon--Yuster conjecture.
\newblock {\em Discrete Mathematics}, 235(1-3):255--269, 2001.

\bibitem{komlos1997blow}
J{\'a}nos Koml{\'o}s, G{\'a}bor~N. S{\'a}rk{\"o}zy, and Endre Szemer{\'e}di.
\newblock Blow-up lemma.
\newblock {\em Combinatorica}, 17(1):109--123, 1997.

\bibitem{komlos2002regularity}
J{\'a}nos Koml{\'o}s, Ali Shokoufandeh, Mikl{\'o}s Simonovits, and Endre
  Szemer{\'e}di.
\newblock The regularity lemma and its applications in graph theory.
\newblock {\em Summer school on theoretical aspects of computer science}, pages
  84--112, 2002.

\bibitem{komlos1996szemeredi}
J{\'a}nos Koml{\'o}s and Mikl{\'o}s Simonovits.
\newblock Szemer{\'e}di's regularity lemma and its applications in graph
  theory.
\newblock 1996.

\bibitem{kuhn2009embedding}
Daniela K{\"u}hn and Deryk Osthus.
\newblock Embedding large subgraphs into dense graphs.
\newblock {\em Surveys in combinatorics 2009}, 137–167. London Math. Soc. Lecture Note Ser., 365. Cambridge University Press, Cambridge, 2009

\bibitem{kuhn2009minimum}
Daniela K{\"u}hn and Deryk Osthus.
\newblock The minimum degree threshold for perfect graph packings.
\newblock {\em Combinatorica}, 29(1):65--107, 2009.

\bibitem{kuhn2005large}
Daniela K{\"u}hn, Deryk Osthus, and Anusch Taraz.
\newblock Large planar subgraphs in dense graphs.
\newblock {\em Journal of Combinatorial Theory, Series B}, 95(2):263--282,
  2005.

\bibitem{lo2015f}
Allan Lo and Klas Markstr{\"o}m.
\newblock $F$-factors in hypergraphs via absorption.
\newblock {\em Graphs and Combinatorics}, 31(3):679--712, 2015.

\bibitem{payan1975nombre}
Charles Payan.
\newblock Sur le nombre d'absorption d'un graphe simple.
\newblock {\em Cahiers Centre \'Etudes Rech}, Op\'er. 17, no.~2-4, 307--317, 1975.

\bibitem{rodl2006dirac}
Vojt{\v{e}}ch R{\"o}dl, Andrzej Ruci{\'n}ski, and Endre Szemer{\'e}di.
\newblock A Dirac-type theorem for 3-uniform hypergraphs.
\newblock {\em Combinatorics, Probability and Computing}, 15(1-2):229--251,
  2006.

\bibitem{rodl2010regularity}
Vojt{\v{e}}ch R{\"o}dl and Mathias Schacht.
\newblock Regularity lemmas for graphs.
\newblock In {\em Fete of combinatorics and computer science}, pages 287--325.
  Springer, 2010.

\bibitem{shokoufandeh2003proof}
Ali Shokoufandeh and Yi~Zhao.
\newblock Proof of a tiling conjecture of Koml{\'o}s.
\newblock {\em Random Structures \& Algorithms}, 23(2):180--205, 2003.

\bibitem{szemeredi1975regular}
Endre Szemer{\'e}di.
\newblock Regular partitions of graphs.
\newblock Technical report, Stanford Univ Calif Dept of Computer Science, 1975.

\end{thebibliography}
\end{document}